\DeclareMathAlphabet{\mathbfsf}{\encodingdefault}{\sfdefault}{bx}{n}
\newcommand{\define}[1]{\textbf{#1}}
\newcommand{\calB}{\mathcal{B}}
\newcommand{\cB}{\mathcal{B}}
\newcommand{\cM}{\mathcal{M}}
\newcommand{\cI}{\mathcal{I}}
\newcommand{\calI}{\mathcal{I}}
\newcommand{\calF}{\mathcal{F}}
\newcommand{\cF}{\mathcal{F}}
\newcommand{\calG}{\mathcal{G}}
\newcommand{\Ccat}{\mathbfsf{C}}
\newcommand{\cat}{\mathbfsf{C}}
\newcommand{\pcat}{\mathbfsf{pC}}
\newcommand{\Vect}{\mathbfsf{Vect}}
\newcommand{\vect}{\mathbfsf{vect}}
\newcommand{\pVect}{\mathbfsf{pVect}}
\newcommand{\Fun}{\mathbfsf{Fun}}
\newcommand{\Top}{\mathbfsf{Top}}
\newcommand{\pTop}{\mathbfsf{pTop}}
\newcommand{\Set}{\mathbfsf{Set}}
\newcommand{\set}{\mathbfsf{set}}
\newcommand{\BCS}{\mathbfsf{Barcodes}}
\newcommand{\con}{\mathbfsf{c}}
\newcommand{\lc}{\mathbfsf{lc}}
\newcommand{\NN}{\mathbb{N}}
\newcommand{\R}{\mathbb{R}}
\newcommand{\id}{\text{id}}
\newcommand{\BC}{\text{BC}}
\newcommand{\dom}{\mathbfsf{dom}}
\newcommand{\cop}{\mathbfsf{cop}}
\newcommand{\pbc}{\mathbfsf{pbc}}
\newcommand{\e}{\epsilon}
\DeclareMathOperator{\Ima}{Im}
\newcommand{\squigrightarrow}{\rightsquigarrow}
\theoremstyle{definition}
\newtheorem{thm}{Theorem}[section]
\newtheorem{theorem}[thm]{Theorem}
\newtheorem{proposition}[thm]{Proposition}
\newtheorem{remark}[thm]{Remark}
\newtheorem{corollary}[thm]{Corollary}
\newtheorem{lemma}[thm]{Lemma}
\newtheorem{example}[thm]{Example}
\newtheorem{definition}[thm]{Definition}
\newtheorem{Algo}[thm]{Algorithm}
\definecolor{darkblue}{rgb}{0.0, 0.0, 0.8}
\definecolor{darkred}{rgb}{0.8, 0.0, 0.0}
\definecolor{darkgreen}{rgb}{0.3, 0.7, 0.5}
\definecolor{ncolor}{rgb}{0.8, 0.8, 0.0}
\definecolor{reblue}{rgb}{0.8, 0.0, 0.8}
\newcommand{\dint}{d_\mathrm{I}}
\newcommand{\Hom}{\mathrm{Hom}}
\newcommand{\cle}{\preccurlyeq}
\begin{document}

\title{Decorated Merge Trees for Persistent Topology}

\author[J. Curry]{Justin Curry}
\author[H. Hang]{Haibin Hang}
\author[W. Mio]{Washington Mio}
\author[T. Needham]{Tom Needham}
\author[O. B. Okutan]{Osman Berat Okutan}

\maketitle

\begin{abstract}
This paper introduces decorated merge trees (DMTs) as a novel invariant for persistent spaces. DMTs combine both $\pi_0$ and $H_n$ information into a single data structure that distinguishes filtrations that merge trees and persistent homology cannot distinguish alone.
Three variants on DMTs, which emphasize category theory, representation theory and persistence barcodes, respectively, offer different advantages in terms of theory and computation.
Two notions of distance---an interleaving distance and bottleneck distance---for DMTs are defined and a hierarchy of stability results that both refine and generalize existing stability results is proved here.
To overcome some of the computational complexity inherent in these distances, we provide a novel use of Gromov-Wasserstein couplings to compute optimal merge tree alignments for a combinatorial version of our interleaving distance which can be tractably estimated. We introduce computational frameworks for generating, visualizing and comparing decorated merge trees derived from synthetic and real data. Example applications include comparison of point clouds, interpretation of persistent homology of sliding window embeddings of time series, visualization of topological features in segmented brain tumor images and topology-driven graph alignment.
\end{abstract}

\setcounter{tocdepth}{1}
\tableofcontents

\section{Introduction}

In this paper we introduce a new set of tools for Topological Data Analysis (TDA) called Decorated Merge Trees (DMTs).
Not only do these new tools have a rich underlying theory that spans category theory and metric geometry, they also provide topological signatures for datasets such as point clouds, time series, grayscale images and networks which are more informative and interpretable than standard persistent homology barcodes. Figure~\ref{fig:DMT_first_example} illustrates the main construction of the paper with a simple example. 
In this figure, two point clouds with different coarse topological structure are depicted. 
Their traditional TDA signatures---degree-0 and degree-1 Vietoris-Rips persistence diagrams---do not distinguish these point clouds. 
Our DMT construction illustrates the multiscale topology of each point cloud by overlaying a merge tree (capturing multiscale connectivity) with a degree-1 persistent homology barcode. 
This depicts not only the multiscale homological ($H_1$) data of each point cloud, but also the (topological) location of each degree-1 feature in the dataset. 
This paper formalizes the DMT construction from several perspectives and extends classical lines of inquiry in the TDA literature---metric stability, decomposability and practical computational aspects---to this novel setting. 

Although the construction of the decorated merge tree presented above is intuitive, it turns out that there are multiple ways of tracing births and deaths of homological features along an evolving set of connected components. 
To this end we provide in Section \ref{sec:DMTs-3-ways} three different definitions of a decorated merge tree:
\begin{enumerate}
    \item The \define{categorical decorated merge tree} relies on the definition of a category of parameterized vector spaces $\pVect$. This definition fits squarely within the framework of generalized persistence modules \cite{bubenik2015metrics,bubenik2014categorification} as it is defined in terms of a functor from $(\R,\leq)$ to $\pVect$. This definition allows us to leverage existing results to define an interleaving distance and establish its stability.

    \item The \textbf{concrete decorated merge tree} takes the perspective that the underlying merge tree, along with its poset structure $(\cM_F,\cle)$, should define the domain of a functor to the category of vector spaces $\Vect$, where the homology of each component at each time is recorded. This definition is equivalent to the categorical one, but is more intuitive and suggests computational approaches.
    \item The \define{barcode decorated merge tree} takes a perspective similar to other TDA techniques~\cite{landi2018rank,lesnick2015interactive,turner2014persistent} that reduce the study of complicated persistent spaces to ensembles of 1-dimensional persistence modules. The barcode DMT associates to each point in the merge tree the barcode gotten by restricting the filtration to the line that starts at that point and stretches to infinity.
\end{enumerate}

\begin{figure}
    \centering
    \includegraphics[width = 0.8\textwidth]{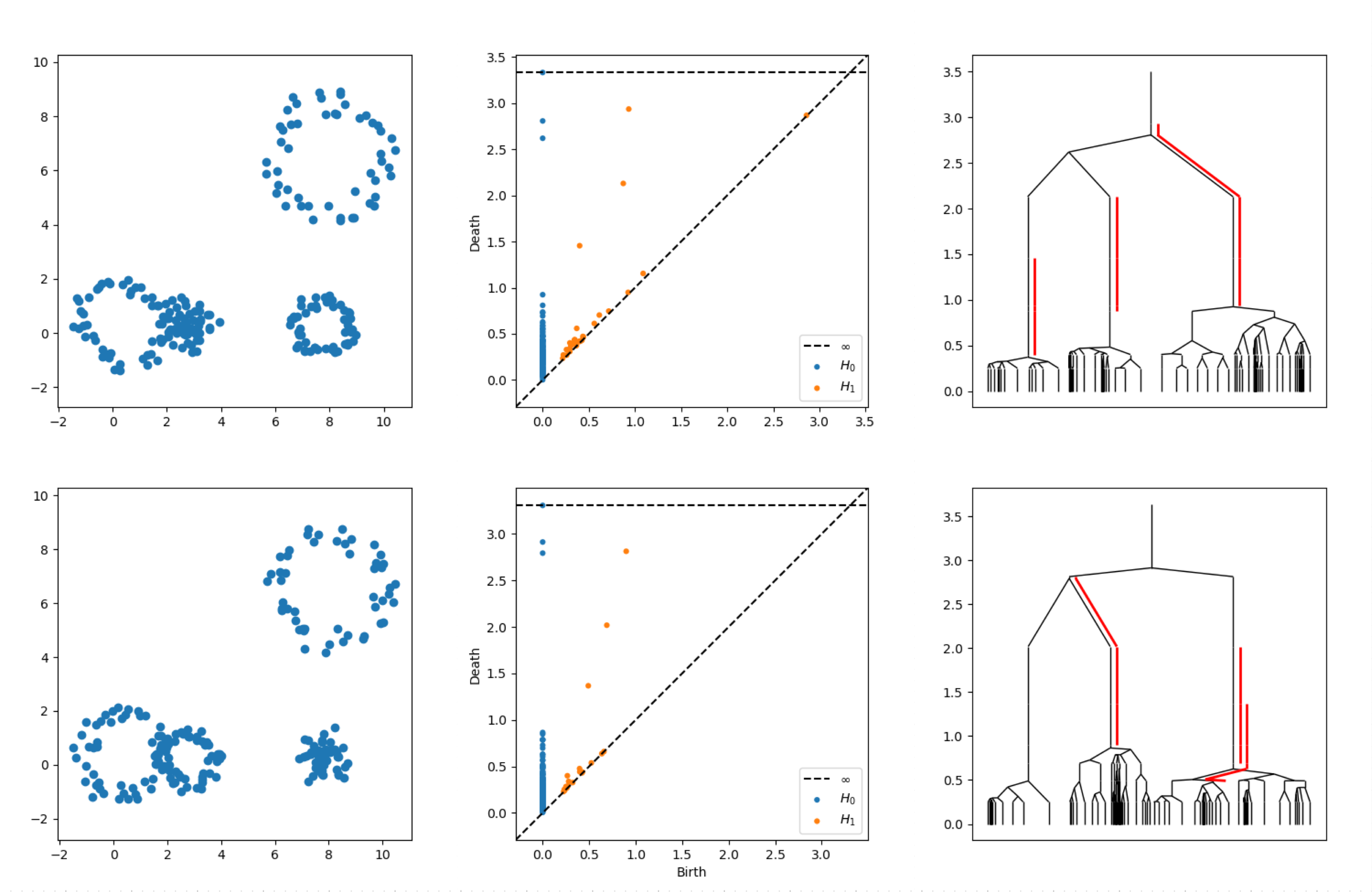}
    \caption{Decorated Merge Trees. The left column shows two point clouds. Their degree-0 and degree-1 Vietoris-Rips persistence diagrams in the middle column are essentially the same, despite clear topological differences in the point clouds. The decorated merge trees (DMTs) in the third column clearly distinguish the datasets topologically by fusing degree-0 and degree-1 information to track the topological \emph{location} of the degree-1 features. Each red bar corresponds to a degree-1 persistent feature and its placement in the merge tree indicates the connected components over which the feature persists.}
    \label{fig:DMT_first_example}
\end{figure}

In Section \ref{sec:stability-of-DMTs}, we introduce  an interleaving distance between decorated merge trees and establish several bounds that set decorated merge trees apart from existing TDA methods.  In particular, let $f,g:X\to \R$ be functions whose sublevel sets are locally connected. For every homological degree $n$, one obtains categorical decorated merge trees $\tilde{F}_n$ and $\tilde{G}_n$, as well as classical merge trees and persistence modules, associated to the sublevel set filtrations of $f$ and $g$. Theorems~\ref{thm:hierarchy-stability} and~\ref{thm:cat-DMT-more-sensitive-than-PH} allow one to extract the following statement:

\smallskip
\noindent {\bf Theorem.} 
The interleaving distance between $\tilde{F}_n$ and $\tilde{G}_n$  is stable with respect to $L^{\infty}$-distance between $f$ and $g$, and is more sensitive than (i.e., lower bounded by) both the interleaving distance between the associated merge trees and the interleaving distance between the associated persistence modules.
\smallskip

After these stability results are proven, we focus on barcode decorated merge trees. 
We show in Theorem~\ref{thm:continuity_barcode_DMT} that a barcode decorated merge tree can be understood as a Lipschitz map from the merge tree to the space of barcodes.
Barcode decorated merge trees are amenable to a theory of matchings (Definition~\ref{defn:matchings_DMTs}) and thus a new decorated bottleneck distance (Definition~\ref{defn:decorated-bottleneck-distance}). 
This offers a tractable and approximable metric for comparing these enriched invariants. 
Theorem~\ref{thm:hierarchy-stability} shows that this matching distance is stable with respect to the interleaving distance between categorical decorated merge trees.

In Section \ref{sec:representations}, representation-theoretic aspects of DMTs take center stage. 
In general, one cannot hope for simple indecomposables such as the barcode decompositions that appear in standard persistent homology---see Example \ref{ex:indecomposable-macaroni} for an illustration. 
We say that a DMT is \emph{real interval decomposable} if it decomposes as a direct sum of DMTs with totally ordered support.
Theorem \ref{thm:untwisted} provides a condition which is equivalent to real interval decomposability. 
The class of real interval decomposable DMTs is of particular interest. 
As Theorem~\ref{thm:injective-barcode-transform} shows, on this class the map taking a DMT to a barcode DMT is injective, thus providing a positive solution to a \emph{topological inverse problem}~\cite{oudot2020inverse}. 
We also give methods for generating real interval decomposable DMTs directly from data, with theoretical guarantees of their correctness (Proposition \ref{prop:disjointness}).

Section \ref{sec:computational_aspects} shifts the focus to computational aspects of interleaving distance between decorated merge trees. Building on work of Gasparovic, et al.~\cite{gasparovic2019intrinsic}, we reformulate computation of the matching distance between barcode decorated merge trees as the search for an alignment between nodes of the trees which is optimal with respect to a certain cost function (Proposition \ref{prop:labeled_distance}). 
This reformulation allows us to introduce a method for estimating the metric via a continuous relaxation which can be solved within the Gromov-Wasserstein framework from optimal transport theory  \cite{memoli2011gromov}. 
Our algorithm is novel even when estimating the interleaving distance between (undecorated) merge trees, but has close connections to other recent advances in the literature \cite{li2021sketching,memoli2021ultrametric,memoli2019gromov}. 
The computational focus is continued in Section \ref{sec:examples}, where algorithms for computing and visualizing decorated merge trees from synthentic and real datasets are described. 
The Python code used to produce the figures and experiments for the paper are publicly available under an open source license at \url{https://github.com/trneedham/Decorated-Merge-Trees}.

The main paper concludes with a discussion of future directions of research in Section~\ref{sec:discussion}. 
In particular, we note that the DMT concept has natural generalizations such as Reeb or MAPPER graphs decorated with zig-zag modules~\cite{carlsson2010zigzag}, correspondence modules~\cite{hang2020correspondence} or Leray (co)sheaves~\cite{curry2015topological}.
These constructions will be the subject of future work as they require more substantative theoretical and algorithmic developments.
By contrast, DMTs fit naturally into a pre-existing body of literature and a fleshed out code base.
Regardless, decorated merge trees and decorated Reeb graphs are just a small part of a broader research program to construct enriched TDA invariants that are more informative than classical barcodes.

The paper includes three appendices. 
We draw readers' attention to Appendices~\ref{sec:interval-topology} and~\ref{sec:existence_of_merge_tree}, which contain results on merge tree topology which are theoretically fundamental but fall outside of the narrative of the main body of the paper. Appendix~\ref{sec:technical_proofs} contains proofs of some technical results.

\subsection*{Acknowledgements}

JC would like to thank Rachel Levanger for discussions dating back to 2017 when the module-theoretic and lift-theoretic approaches to DMTs were first considered. 
JC would also like to thank Gabriel Bainbridge for teaching him about the category of parameterized objects during the summer of 2020. Gabe's use of the parameterized category is set to appear in~\cite{bainbridge2021}.
Finally, JC would like to acknowledge NSF Grant CCF-1850052 and NASA Contract 80GRC020C0016 for supporting his research.
WM acknowledges partial support by NSF grant DMS-1722995. TN would like to thank Facundo M\'{e}moli for useful feedback on an earlier draft of the paper.
HH would like to acknowledge NSF grant DMS-1854683.

\section{Decorated Merge Trees Three Different Ways}\label{sec:DMTs-3-ways}

In this paper we investigate topological signatures which go beyond standard persistent homology of filtered topological spaces. 
To this end, we formally define three notions of decorated merge trees---\emph{categorical}, \emph{concrete} and \emph{barcode-decorated}---which were described informally in the introduction. 
Before doing so, we review some preliminary definitions.

\subsection{Preliminaries}

We now introduce the general class of objects to which our decorated merge tree constructions will be applied.

\begin{definition}\label{defn:persistence-space}
A \define{persistent space} is a functor $F:(\R,\leq) \to \Top$, where $(\R,\leq)$ is considered as a poset category. Concretely, a persistent space associates to each $s\in \R$ a topological space $F(s)$ and to each ordered pair $s\leq t$ a continuous map $F(s\leq t):F(s) \to F(t)$. These maps collectively satisfy the usual composition rules of a functor.
When each map $F(s\leq t)$ is an injection, we will call such a persistent space a \define{filtration}.
\end{definition}

We now describe several situations where persistent spaces arise, focusing on the case of filtrations.

\begin{example}
Given a continuous function $f:X\to \R$, one can consider the \define{sublevel-set filtration}, which is a persistent space where
$$F(s):=f^{-1}(-\infty,s]=\{x\in X \mid f(x) \leq s\}.$$
By equipping each sublevel-set with the subspace topology, the inclusion maps $F(s)\subseteq F(t)$ are continuous and define the maps $F(s\leq t)$ for the persistent space. 
\end{example}

The following gives a flexible class of instances of sublevel-set filtrations.

\begin{example}
Given a subset $Z$ of a metric space $X$, we define the \define{offset function} $f_Z : X \to \R$ to be
\[
    f_Z(x):=\inf_{z\in Z} d(x,z).
\]
The \define{offset filtration} $F_Z$, defined by
\[
F_Z(s):= \{x\in X \mid d(x,Z):=\inf_{z\in Z} d(x,z) \leq s\},
\]
is the sublevel-set filtration of $f_Z$.
\end{example}

Sublevel-set filtrations can be extended to certain non-continuous functions on simplicial complexes which arise frequently in topological data analysis.

\begin{example}\label{ex:sublevel_set_simplicial}
Let $X$ be a simplicial complex and $f:X \to \R$ a function which is constant on each simplex. 
If $f$ is \define{monotone}, that is whenever $\sigma$ is a face of $\tau$ then $f(\sigma)\leq f(\tau)$, then the sublevel-set filtration $F(s) = |f^{-1}(-\infty,s]|$ defines a persistent space, where $|\cdot|$ denotes geometric realization.
\end{example}

We now introduce some basic concepts of Topological Data Analysis (TDA). 
These are invariants built to study persistent spaces. 
We assume that the reader is familiar with the fundamentals of TDA and mainly use the definitions here to set terminology and notation---see the survey \cite{carlsson2014topological} for more background.
For the remainder of this paper we let $\pi_0$ denote the connected components functor $\pi_0: \Top \to \Set$ and let $H_n$ denote some choice of homology theory with coefficients in a field $\Bbbk$.

\begin{definition}\label{defn:generalized-MT}

A functor $S:(\R,\leq) \to \Set$ is called a \define{persistent set}~\cite{carlsson2013classifying,curry2018fiber}.
If $F:(\R,\leq)\to\Top$ is a persistent space, then the \define{persistent set of components} is the composition of $F:(\R,\leq) \to \Top$ with the connected components functor $\pi_0:\Top \to \Set$, i.e.
\[
    \pi_0 \circ F :(\R,\leq) \to \Set \qquad \text{where} \qquad s \quad \squigrightarrow \quad \pi_0(F(s)).
\]
Associated to any persistent set $S:(\R,\leq) \to \Set$ is its \define{display poset},
which is simply the disjoint union of all the sets that appear in $S$, i.e.
\[
    \mathcal{S}:=\bigsqcup_{t\in \R} S(t) := \bigcup S(t)\times \{t\}
\]
The poset structure on $\mathcal{S}$ is defined by declaring
\[
    (x,s) \cle (y,t) \qquad \text{if and only if} \qquad S(s\leq t)(x)=y.
\]
The \define{generalized merge tree} of $F$ is the display poset $(\cM_F,\cle)$ associated to the persistent set $\pi_0\circ F$.
\end{definition}

We use the term ``generalized'' to distinguish it from the classical merge tree.

\begin{definition}\label{defn:merge-tree-as-reeb-graph}
Let $X$ be a topological space and $f:X \to \R$ a continuous function. The \define{epigraph of $f$} is the set $E_f := \{(x,r) \mid f(x) \leq r\}$. 
The \define{classical merge tree} $\cM_f$ is the Reeb graph of the \define{projection function} $\pi_f:E_f \to \R:(x,r) \mapsto r$.
That is, the merge tree is the quotient space $\cM_f := E_f/\sim$, where $\sim$ is the equivalence relation $p = (x,r) \sim (x',r') = p'$ if and only if $r = r'$ and $p$ and $p'$ lie in the same connected component of the level set $\pi_f^{-1}(r)$. 
Let $\tilde{\pi}_f:\cM_f \to \R$ denote the projection map induced by $\pi_f$.
\end{definition}

\begin{remark}
The differences between the classical and generalized merge tree have gone largely uncommented on in the literature and are often treated as interchangeable.
Observe that $\cM_F$ is defined for any persistent space $F$ and comes endowed with a poset structure. 
On the other hand, $\cM_f$ is defined for a continuous function $f$ on a space $X$ and comes equipped with a quotient topology.
This difference only creates problems when one notices that there are two notions of interleaving, which might not agree always.
In Appendix~\ref{sec:interval-topology} we prove that for functions on compact spaces with finitely many critical points, this difference can be ignored.
\end{remark}

Beyond using $\pi_0$ to discriminate persistent spaces, we can use homology. 

\begin{definition}
A \define{persistence module} is a functor from the poset category $(\R,\leq)$ into $\Vect_{\Bbbk}$, the category of vector spaces over the field $\Bbbk$. A persistence module is \define{pointwise finite dimensional} if its image lies in $\vect_{\Bbbk}$, the category of finite dimensional vector spaces over $\Bbbk$.

Let $F:(\R,\leq) \to \Top$ be a persistent space. For any non-negative integer $n\geq 0$ the \define{$n^{th}$ persistent homology module} $F_n:(\R,\leq) \to \Vect_{\Bbbk}$ is the persistence module
\[
    F_n := H_n \circ F : (\R,\leq) \to \Vect_{\Bbbk} \qquad \text{with} \qquad s \quad \squigrightarrow \quad  H_n(F(s);\Bbbk).
\]
We will frequently drop the subscript $\Bbbk$, with the understanding that a field of coefficients has been fixed. 
\end{definition}

One of the central results in the theory of TDA is the theoreom of Crawley-Boevey \cite[Theorem 1.1]{crawley2015decomposition} which states that pointwise finite-dimensional persistence modules always decompose into direct sums of simple indecomposables. We introduce the relevant terminology and notation below.

\begin{definition}
Let $I \subset \R$ be an interval. The \define{interval module associated to $I$} is the persistence module $\Bbbk_I:(\R,\leq) \to \vect_{\Bbbk}$ with $\Bbbk_I(s) = \Bbbk$ if and only if $s \in I$ and otherwise $\Bbbk_I(s)$ is the zero vector space. We define $\Bbbk_I(s \leq t) = \mathrm{id}_{\Bbbk}$ if and only if $s,t \in I$ and otherwise $\Bbbk_I(s \leq t)$ is the zero map.

Crawley-Boevey's theorem says that any pointwise finite-dimensional persistence module is isomorphic to a direct sum of interval modules, and that this representation is unique up to permuting factors. A \define{barcode} $B=\{(I,m_I)\}$ is a multiset of intervals in the real line, i.e. $I\subseteq \R$ is an interval and $m_I\in \NN$ indicates its multiplicity. It follows from the discussion above that any pointwise finite dimensional $\R$-module $F$ has a uniquely associated barcode $B(F)$. Let $\BCS$ denote the set of all barcodes.

The existence of barcode representations of persistence modules allows for various methods of visualization and analysis of the multiscale topology of a filtered space. A barcode can be represented visually by drawing the collection of intervals in the plane (see the righthand column of Figure \ref{fig:two-subsets}) or as a multiset of points in the plane called a \define{persistence diagram} (see the righthand column of Figure \ref{fig:DMT_first_example}); here, the endpoints of each interval are plotted as an ordered pair. 
\end{definition}

We now consider the simple example illustrated in Figure \ref{fig:two-subsets} to see these concepts in action and to motivate the definitions introduced below. The spaces $X,Y \subset \R^2$ give rise to persistent spaces via their respective offset filtrations. Despite the fact that $X$ and $Y$ are topologically distinct, the degree-0 and degree-1 persistent homology barcodes extracted from these persistent spaces are the same. This brings us to the goal of defining richer topological signatures (decorated merge trees),  which are able to track \emph{interactions} of topological features in a persistent space. 

\begin{figure}
  \includegraphics[width=0.8\linewidth]{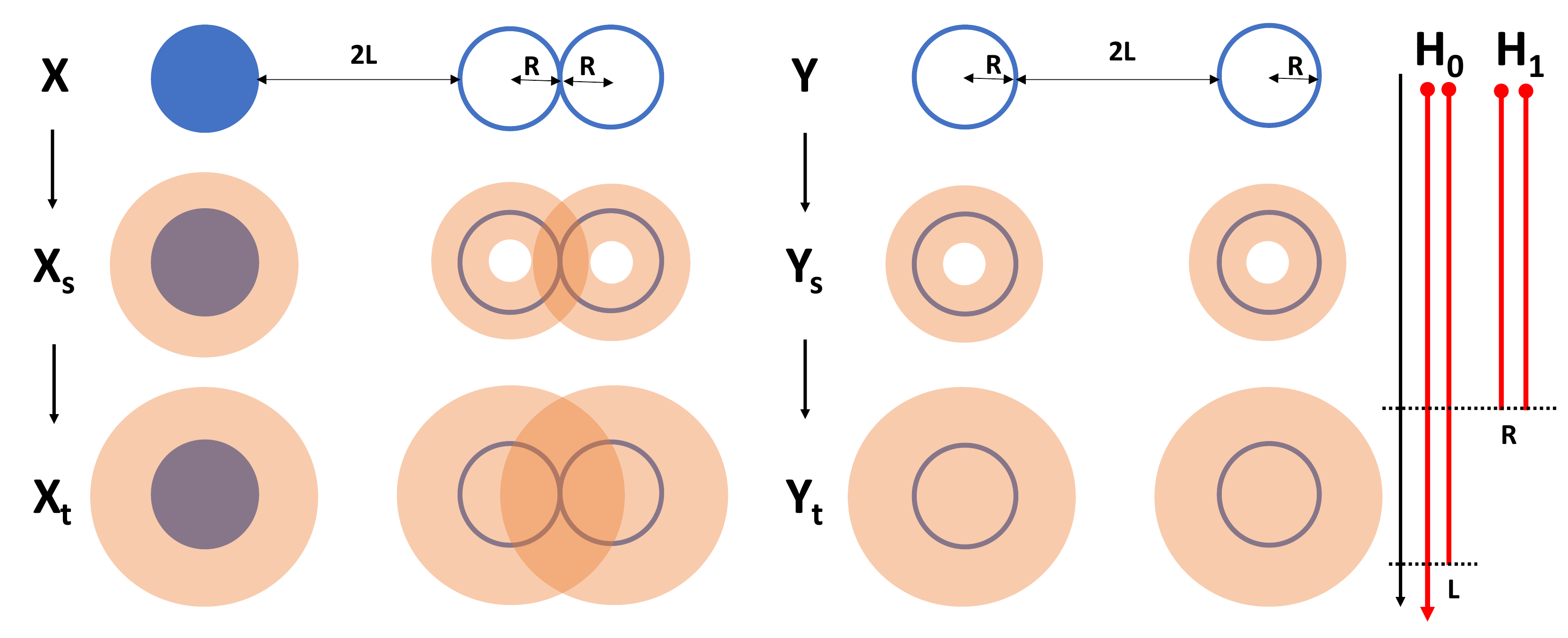}
  \caption{A motivating example for the need for decorated merge trees. Two subsets $X$ and $Y$ of $\R^2$ along with two times in their offset filtration are shown. Degree-0 and degree-1 persistent homology fails to distinguish them as the number of components and the number of holes are the same across all stages in the filtration. This is witnessed by their identical persistent homology barcodes, shown to the right.}
  \label{fig:two-subsets}
\end{figure}

\subsection{The Categorical Decorated Merge Tree}

The first definition of a decorated merge tree will be given as a purely category-theoretic construction. 
This perspective allows for streamlined proofs of stability theorems below, but has the downside that the connection to merge trees may not be as transparent. 
This is remedied with alternative constructions in the following subsections.

The key observation for defining the categorical decorated merge tree is that homology decomposes into a direct sum of the homology of each connected component.
This is expressed in the well known fact that
\[
    \text{if} \qquad X \cong \bigsqcup X_i:=\bigcup X_i \times \{i\} \qquad \text{then} \qquad H_n(X) \cong \bigoplus_i H_n(X_i).
\]
Moreover, if $f:X \to Y$ is a continuous map of spaces then we can view $f$ as a map between disjoint unions that send each factor in the domain to a unique factor in the range.
In other words, the continuous map
\[
    f= \sqcup f_{i} : \bigsqcup_{i\in \pi_0(X)} X_i \to \bigsqcup_{j\in \pi_0(Y)} Y_j
\]
can be \emph{parameterized} by the underlying map of sets $\pi_0(f) \colon \pi_0(X) \to \pi_0(Y)$.
This indicates that we can parameterize maps inside of a persistent space along its associated persistent set of connected components.
This requires some further restrictions on properties of the topological spaces involved such as local connectedness.
First we isolate an important categorical construction.

\begin{definition}\label{defn:cat-of-parameterized-objects}
Let $\cat$ be a category. The \define{category of discretely parameterized objects} in $\cat$, written $\pcat$, has for objects functors $I: S \to \cat$ where $S$ is a set viewed as a discrete category, i.e.~the only morphisms in $S$ are identity morphisms.
The functor $I$ amounts to a choice of object of $\cat$ for each $s\in S$.
We will refer to such a functor as an \define{$S$-parameterized object}.
A morphism from an $S$-parameterized object $I:S \to \cat$ to a $T$-parameterized object $J: T \to \cat$ consists of a map of sets $m:S \to T$ and a natural transformation from the functor $I$ to the pullback of $J$ along $m$, i.e. a morphism is a natural transformation $\alpha: I \Rightarrow m^*J$ where $m^*J:=J\circ m$.
\end{definition}

We note that if $\cat$ has coproducts, then $\pcat$ participates in the following diagram of categories and functors:
\begin{equation*}
\begin{tikzcd}
& \pcat \ar[dr, "\cop"]  \ar[dl, "\dom"'] & \\
\Set &  & \cat
\end{tikzcd}
\end{equation*}
The functor $\dom$ sends any $S$-parameterized object $I:S \to \cat$ to the underlying parameterizing set $S$.
The functor $\cop$ sends the diagram $I:S \to \cat$ to its colimit, which is the coproduct in this case. 
Before exploiting the above diagram further, we state the result that was used implicitly at the outset of this subsection.

\begin{lemma}\label{lem:loc-connected}
Let $\Top^{\con}$ denote the category of connected and locally connected topological spaces.
Let $\Top^{\lc}$ denote the category of locally connected spaces.
The coproduct functor induces an equivalence between these categories:
\[
    \cop:\pTop^{\con} \to \Top^{\lc} \qquad \text{where} \qquad I:S \to \Top^{\con} \quad \squigrightarrow \quad \bigsqcup_{s\in S} I(s).
\]
\end{lemma}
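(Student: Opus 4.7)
The plan is to construct an explicit quasi-inverse $\Pi: \Top^{\lc} \to \pTop^{\con}$ to $\cop$ and verify the two required natural isomorphisms. Define $\Pi$ on objects by sending a locally connected space $X$ to the functor $\pi_0(X) \to \Top^{\con}$ that assigns to each component class $c$ the corresponding subspace $X_c \subseteq X$. Each $X_c$ is connected by definition, and locally connected because in a locally connected space components are open, and open subspaces of locally connected spaces are locally connected. On morphisms, a continuous $f: X \to Y$ sends each connected set $X_c$ into a unique component $Y_{m(c)}$, simultaneously producing a map of sets $m: \pi_0(X) \to \pi_0(Y)$ and a family of continuous restrictions $f_c: X_c \to Y_{m(c)}$---precisely the data of a morphism in $\pTop^{\con}$ as in Definition~\ref{defn:cat-of-parameterized-objects}.

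For the isomorphism $\cop \circ \Pi \cong \id_{\Top^{\lc}}$, I would verify that the canonical continuous bijection $\eta_X: \bigsqcup_{c \in \pi_0(X)} X_c \to X$ is a homeomorphism. Continuity follows from the universal property of the coproduct topology applied to the inclusions $X_c \hookrightarrow X$. Its inverse is continuous precisely because each $X_c$ is open in $X$, so the original topology on $X$ agrees with the coproduct topology on its components; this is where local connectedness is used decisively. Naturality in $X$ is routine. For the other direction, $\Pi \circ \cop \cong \id_{\pTop^{\con}}$, given $I: S \to \Top^{\con}$ one verifies that the summands $I(s) \times \{s\}$ are exactly the connected components of $\bigsqcup_s I(s)$: each is connected because $I(s)$ is, and each is clopen in the coproduct topology, so together they exhaust the component decomposition. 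This also confirms that $\cop$ actually lands in $\Top^{\lc}$, since a disjoint union of locally connected spaces is locally connected.

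Full faithfulness of $\cop$ then follows either from the triangle identities or directly from the universal property of coproducts: any continuous map $\bigsqcup_s I(s) \to \bigsqcup_t J(t)$ restricts on each connected summand $I(s)$ to a map whose image lies in a single component $J(m(s))$ (using connectedness of $I(s)$ and openness of each summand $J(t)$), and this recovers exactly the pair $(m,\alpha)$ appearing in the definition of morphisms in $\pcat$. The main obstacle I anticipate is not conceptual but careful bookkeeping around the local-connectedness hypothesis: the openness of components in $X$, the local connectedness of each restricted component $X_c$, and the openness of $\eta_X$ all rest on the same underlying fact, and one must ensure the quasi-inverse is well-typed throughout without accidentally invoking stronger hypotheses like Hausdorffness or path-connectedness.
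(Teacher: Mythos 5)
Your proof is correct, but it takes a different route from the paper's. The paper proves the equivalence by directly verifying that $\cop$ is full, faithful, and essentially surjective (invoking the standard characterization of equivalences); you instead construct the explicit quasi-inverse $\Pi$ and verify the two natural isomorphisms $\cop \circ \Pi \cong \id_{\Top^{\lc}}$ and $\Pi \circ \cop \cong \id_{\pTop^{\con}}$. Amusingly, your $\Pi$ is exactly the functor the paper calls $\pbc$ (``parameterized by components''), which it introduces \emph{after} the lemma, and the paper then remarks that the pair of isomorphisms you prove constitutes ``an alternative proof to Lemma~\ref{lem:loc-connected}''---so you have supplied the details of that acknowledged alternative. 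The trade-off: the paper's full/faithful/ess.\ surjective route is leaner because it leans on an abstract characterization theorem and doesn't need to establish naturality of the isomorphisms; your route is more constructive and has the side benefit of delivering the functor $\pbc$ that the paper needs anyway for Lemma~\ref{lem:persistent-space-factors}, so the bookkeeping (openness of components in locally connected spaces, local connectedness of open subspaces, local connectedness of disjoint unions) is not wasted. Your observation that $\cop$ actually lands in $\Top^{\lc}$ is a point the paper glosses over, and your care about not invoking path-connectedness or Hausdorffness is well-placed---the argument genuinely uses only local connectedness.
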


\begin{proof}

To complete the proof, it suffices to show that $\cop:\pTop^{\con} \to \Top^{\lc}$ is full, faithful and essentially surjective~\cite[Thm. 1.5.9]{riehl2017category}.
The essentially surjective property is true by virtue of the fact that every locally connected space is naturally homeomorphic to the coproduct (i.e., disjoint union) of its components.
\emph{Full} and \emph{faithful} mean that if $I:S \to \Top^{\con}$ and $J:T \to \Top^{\con}$ are two parameterized connected spaces, then the map 
\[
    \Hom_{\pTop^{\con}}(I,J) \to \Hom_{\Top^{\lc}}(\bigsqcup_{s\in S} I(s), \bigsqcup_{t\in T} J(t) )
\]
is surjective and injective, respectively.
To show surjectivity (fullness), we have to show that every continuous map
\[
    f: \bigsqcup_{s\in S} I(s) \to \bigsqcup_{t\in T} J(t)
\]
is realized by some morphism $(m,\alpha)$ in $\pTop^{\con}$.
Here connectivity of each $I(s)$ is an essential part of the hypothesis because it allows us to associate to each $s\in S$ a unique $t\in T$ so that $f(I(s))\subseteq J(t)$.
This specifies the map of sets $m:S \to T$.
The restriction of the continuous map $f$ to each $I(s)$ specifies the components of a natural transformation $\alpha:I \Rightarrow m^*J$.
Injectivity is not difficult to see because if $(m,\alpha)$ and $(n,\beta)$ are two morphisms that induce the same map between the disjoint unions, then set-theoretically they are equal as well. Recalling the set-theoretic definition of the disjoint union, this means that
\[
    \sqcup \alpha_s =\sqcup \ell_s : \bigcup_{s\in S} I(s)\times \{s\} \to \bigcup_{t\in T} J(t)\times \{t\}
\]
and in particular that $m=n$ and $m^*\alpha=n^*\beta$.
\end{proof}

One consequence of Lemma~\ref{lem:loc-connected} is that we can define another functor that serves as sort of ``inverse" to $\cop$, up to natural isomorphism.

\begin{definition}
The \define{parameterized by components} functor
\[
    \pbc: \Top^{\lc} \to \pTop^{\con}
\]
takes each locally connected space $X$ to the object $I:\pi_0(X) \to \Top^{\con}$ which takes the label $i\in \pi_0(X)$ for an equivalence class to the underlying subset of $X$ carved out by this equivalence class, equipped with the subspace topology.
A map of spaces $f:X \to Y$ is taken to the morphism $(m,\alpha)$ where $m=\pi_0(f)$ is the map recording which connected component of $X$ maps to which component of $Y$ and $\alpha$ is the natural transformation that records the restriction of $f$ to each component.
\end{definition}

By virtue of~\cite[Def. 1.5.4]{riehl2017category}, an alternative proof to Lemma~\ref{lem:loc-connected} is that
\[
    \pbc \circ \cop \cong \id_{\pTop^{\lc}} \qquad \text{and} \qquad \cop \circ \pbc \cong \id_{\Top^{\lc}}.
\]

We leverage the above identities to provide our first refinement of persistent spaces into functors from $(\R,\leq) \to \pTop^{\con}$.
This is the heart of the definition of a categorical decorated merge tree.

\begin{lemma}\label{lem:persistent-space-factors}
Any persistent space $F: (\R,\leq) \to \Top^{\lc}$ has an associated \define{persistently parameterized space} 
\[
    \tilde{F}:=\pbc \circ F: (\R,\leq) \to \pTop^{\con}.
\]
The functor $\tilde{F}$ fits into the following diagram, which commutes up to natural isomorphism.
\begin{equation*}
\begin{tikzcd}
& (\R,\leq) \ar[d,"\tilde{F}"] \ar[ddl,"\pi_0\circ \tilde{F}"'] \ar[ddr,"F"] \ar[ddr,"\cong"'] & \\
& \pTop^{\con} \ar[dr, "\cop"']  \ar[dl, "\dom"] & \\
\Set &  & \ar[ll,"\pi_0"] \Top^{\lc}
\end{tikzcd}
\end{equation*}
\end{lemma}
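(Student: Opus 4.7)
The plan is to observe that, once Lemma~\ref{lem:loc-connected} is in hand, the statement is essentially a formal consequence of functor composition plus the natural isomorphisms identifying $\pbc$ as an inverse to $\cop$. So the proof reduces to checking three things: that $\tilde{F}$ is well-defined as a functor, that the left triangle of the diagram commutes (on the nose), and that the right triangle commutes up to natural isomorphism.

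First I would define $\tilde{F} := \pbc \circ F$ and note that since $F$ takes values in $\Top^{\lc}$ and $\pbc:\Top^{\lc} \to \pTop^{\con}$ is a functor, the composite is automatically a functor from $(\R,\leq)$ to $\pTop^{\con}$. No extra content is needed here beyond checking that $F(s)$ being locally connected ensures $\pbc(F(s))$ lives in $\pTop^{\con}$; this is immediate because connected components of a locally connected space are themselves (open and) locally connected, hence are objects of $\Top^{\con}$.

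Next I would verify the left triangle. By the very definition of $\pbc$, for any $X \in \Top^{\lc}$ the indexing set of the parameterized object $\pbc(X)$ is $\pi_0(X)$, so $\dom \circ \pbc = \pi_0$ as functors $\Top^{\lc} \to \Set$ (on morphisms, $\pbc(f) = (\pi_0(f),\alpha)$ also has first component $\pi_0(f)$). Composing with $F$ gives $\dom \circ \tilde{F} = \pi_0 \circ F$ on the nose, which is strictly stronger than the commutativity claimed in the diagram.

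Finally I would verify the right triangle by invoking Lemma~\ref{lem:loc-connected} and the remark immediately following it, which asserts $\cop \circ \pbc \cong \id_{\Top^{\lc}}$. Whiskering this natural isomorphism on the right by $F:(\R,\leq) \to \Top^{\lc}$ yields a natural isomorphism $\cop \circ \tilde{F} = \cop \circ \pbc \circ F \cong F$, giving commutativity of the right triangle up to natural isomorphism. The only place where care is required is ensuring this whiskered transformation is genuinely natural in the $(\R,\leq)$-variable, which is automatic from the general fact that horizontal composition of natural transformations with functors preserves naturality. The main obstacle, insofar as there is one, has already been handled in the proof of Lemma~\ref{lem:loc-connected}: namely, establishing that $\pbc$ and $\cop$ are quasi-inverse equivalences of categories. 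Once that is granted, the lemma here is purely formal.
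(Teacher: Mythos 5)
Your proof is correct and takes essentially the same route as the paper: both hinge on the natural isomorphism $\cop\circ\pbc\cong\id_{\Top^{\lc}}$ established around Lemma~\ref{lem:loc-connected}, whiskered against $F$. The one (minor) point on which you depart is the left triangle: the paper derives $\dom\circ\tilde{F}\cong\pi_0\circ F$ as a consequence of the right triangle (homeomorphic spaces have isomorphic component sets), whereas you observe directly from the definition of $\pbc$ that $\dom\circ\pbc=\pi_0$ strictly, so the left triangle commutes on the nose; this is a slightly sharper and cleaner way to phrase that half of the claim.
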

\begin{proof}

The natural isomorphism $\cop\circ \pbc \cong \id_{\Top^{\lc}}$ from the remark above can be restricted to the image of $F$ to yield
\[
    \cop\circ \pbc \circ F \cong F \qquad \Leftrightarrow \qquad  \cop \circ \tilde{F} \cong F.
\]
Explicitly this means that for every $s\in \R$ the spaces $\cop \circ \tilde{F}(s)$ and $F(s)$ are homeomorphic.
Since homeomorphic spaces have isomorphic sets of components, we know that the persistent sets $\pi_0\circ F$ and $\dom \circ \tilde{F}$ are naturally isomorphic as well.
\end{proof}

We are now able to define the categorical decorated merge tree of a persistent (locally connected) space.

\begin{definition}\label{defn:param-DMT}
Let $F:(\R,\leq) \to \Top^{\lc}$ be a persistent space where every space is locally connected and let $\tilde{F}:(\R,\leq) \to \pTop^{\con}$ denote the persistently parameterized space from Lemma~\ref{lem:persistent-space-factors}.
The \define{categorical decorated merge tree in degree $n$} is the functor
\[
    \tilde{F}_n := H_n \circ \tilde{F}: (\R,\leq) \to \pVect \qquad \text{where} \qquad s \squigrightarrow I(s): \pi_0(F(s)) \to \Vect.
\]
The functor $I(s):\pi_0(F(s)) \to \Vect$ is a parameterized vector space (in the sense of Definition~\ref{defn:cat-of-parameterized-objects}) that sends each component index $i\in \pi_0(F(s))$ to the homology vector space of that component, i.e.~$H_n(F(s)_i)$.
\end{definition}

\begin{figure}
  \includegraphics[width=0.8\linewidth]{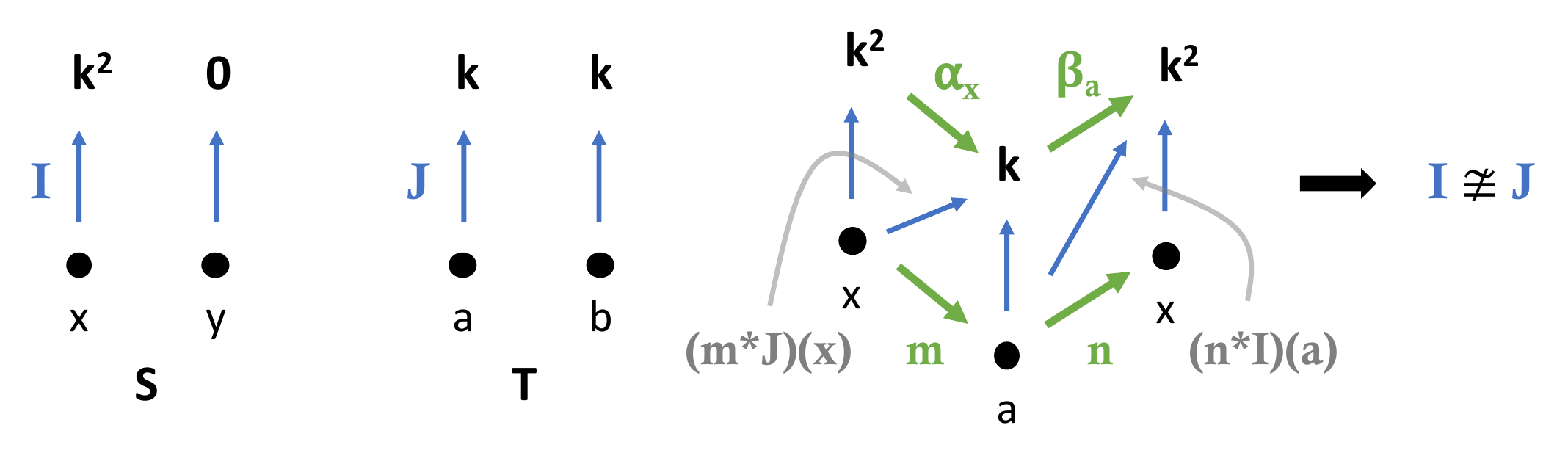}
  \caption{Associated to the offset filtration of the two subsets $X$ and $Y$ of $\R^2$ from Figure~\ref{fig:two-subsets} are two categorical decorated merge trees $\tilde{F}_1$ and $\tilde{G}_1$, as defined in Definition~\ref{defn:param-DMT}. This figure shows the mechanics of checking if the parameterized vector spaces at offset 0, $\tilde{F}_1(0)=I:S \to \Vect$ and $\tilde{G}_1(0)=J:T \to \Vect$, are isomorphic. They are not, which proves that our categorical decorated merge tree can distinguish these spaces. See Example~\ref{ex:non-iso-pVect} for more details.}
  \label{fig:non-iso-pVect}
\end{figure}

\begin{example}[Our Motivating Example, Reconsidered]\label{ex:non-iso-pVect}
In Figure~\ref{fig:two-subsets}, we considered the offset filtrations $F$ and $G$ associated to two different subsets of the plane $X$ and $Y$, respectively.
Following Definition~\ref{defn:param-DMT} we can associate two categorical decorated merge trees in degree 1, $\tilde{F}_1$ and $\tilde{G}_1$, to $X$ and $Y$.
To verify that $\tilde{F}_1\ncong \tilde{G}_1$ it suffices to show that their values at filtration value $0$, which we denote by $I:\{x,y\} \to \Vect$ and $J:\{a,b\} \to \Vect$, are not isomorphic in the category $\pVect$.

Two parameterized vector spaces $I: S \to \Vect$ and $J: T \to \Vect$ are \define{isomorphic} if there are set maps $m:S \to T$ and $n:T \to S$ and natural transformations $\alpha:I \Rightarrow m^*J$ and $\beta:J\Rightarrow n^*I$ satisfying
\[
	m^*\beta \circ \alpha =\id_I \quad \text{and} \quad n^*\alpha \circ \beta = \id_J;
\]
in particular,
\[
n\circ m = id_S \quad \text{and} \quad m\circ n =\id_T.
\]
By considering the parameterized vector spaces at 0 in our example, $I:\{x,y\} \to \Vect$ and $J:\{a,b\} \to \Vect$, where $I(x)=\Bbbk^2$, $I(y)=0$, $J(a)=\Bbbk$ and $J(b)=\Bbbk$, we can easily show that no isomorphism is possible because any bijection between $S=\{x,y\}$ and $Y=\{a,b\}$ will force a linear transformation of the form
\[
    \Bbbk^2 \to \Bbbk \to \Bbbk^2,
\]
which can never be an isomorphism.
\end{example}

\subsection{The Concrete Decorated Merge Tree}
The categorical notion of a decorated merge tree boils down to the following sequence of assignments: to each real number $s\in \R$ a set $I(s)$ is assigned and then to each element $i\in I(s)$ a (homology) vector space is assigned.
This process is reminiscent of specifying an element of
$\Hom(A, \Hom(B, C))$, which amounts to assigning to each element of $A$ a map from $B$ to $C$.
The reader then might find it useful to consider the adjunction between products and exponentials gotten by currying:
\[
    \Hom(A, \Hom(B, C)) \cong \Hom(A\times B,C).
\]
In this section we work with, in essence, the right hand side of this isomorphism, where $A\times B$ is replaced with the generalized merge tree $\cM_F$ and $C$ is replaced with the category of vector spaces.
The analog of currying in this section is \emph{concretization}, the namesake of the \emph{concrete decorated merge tree} defined below.

\begin{definition}[The Concrete Decorated Merge Tree]\label{defn:concrete-DMT}
Let $F:(\R,\leq) \to \Top^{\lc}$ be a persistent space with all $F(s)$ locally connected and let $(\cM_F,\cle)$ denote its generalized merge tree. The \define{concrete decorated merge tree in degree $n$} is the functor
\[
    \mathcal{F}_n:(\cM_F,\cle) \to \Vect_{\Bbbk} \qquad \text{where} \qquad (i,s) \quad \squigrightarrow \quad H_n(F(s)_i;\Bbbk)
\]
that records the $n^{th}$ homology of the $i^{th}$ component of $F(s)$. 

\end{definition}

\begin{remark}\label{rmk:generalized_concrete_DMT}
    The definition of a concrete decorated merge tree can be abstracted in a way that does not refer to a persistent space $F$ at all.
    That is, we can define a concrete decorated merge tree more generally to be a functor $\cF:(\cM_F,\cle) \to \Vect$ on a generalized merge tree (considered as a poset) associated to some persistent set.
    When we wish to displace emphasis on the originating persistent space $F$, we will use the term \define{tree module} to refer to $\cF$.
\end{remark}

\subsection{The Barcode Decorated Merge Tree}

To a persistent space $F:(\R,\leq) \to \Top^{\lc}$ we have already shown how to associate two (equivalent) devices to record homology in degree $n$ as it varies across components and filtration values $s\in \R$. Unfortunately, unlike ordinary persistent homology modules, neither of these devices have simple summaries such as barcodes or persistence diagrams.
This is due to the fact that the underlying poset $(\cM_F,\cle)$ is not totally-ordered.
However, if one considers the restriction of a tree module $\mathcal{F}_n$ to the principal up set at a point $p=(i,s)\in \cM_F$, then we \emph{do} obtain a module indexed by a totally ordered set and can call this the ``barcode at $p$.''
This motivates the following definitions.

\begin{definition}\label{defn:barcode_decorated_merge_tree}
A \define{barcode decorated merge tree} is a map from a generalized merge tree to the set of barcodes,
\[
    \cB : (\cM_F,\cle) \to \BCS.
\]
We say that a barcode decorated merge tree is \define{determined by restriction} if whenever $(i,s)=:p\cle q:=(j,t) \in \cM_F$, we have that
\[
    \cB(q) = \cB(p) \cap [s,\infty).
\]
If the generalized merge tree \define{has leaves}, meaning that every maximal chain in $(\cM_F,\cle)$ has a minimal element, then we call a barcode decorated merge tree that is determined by restriction a \define{leaf-decorated merge tree}.
\end{definition}

\begin{figure}
  \includegraphics[width=0.8\linewidth]{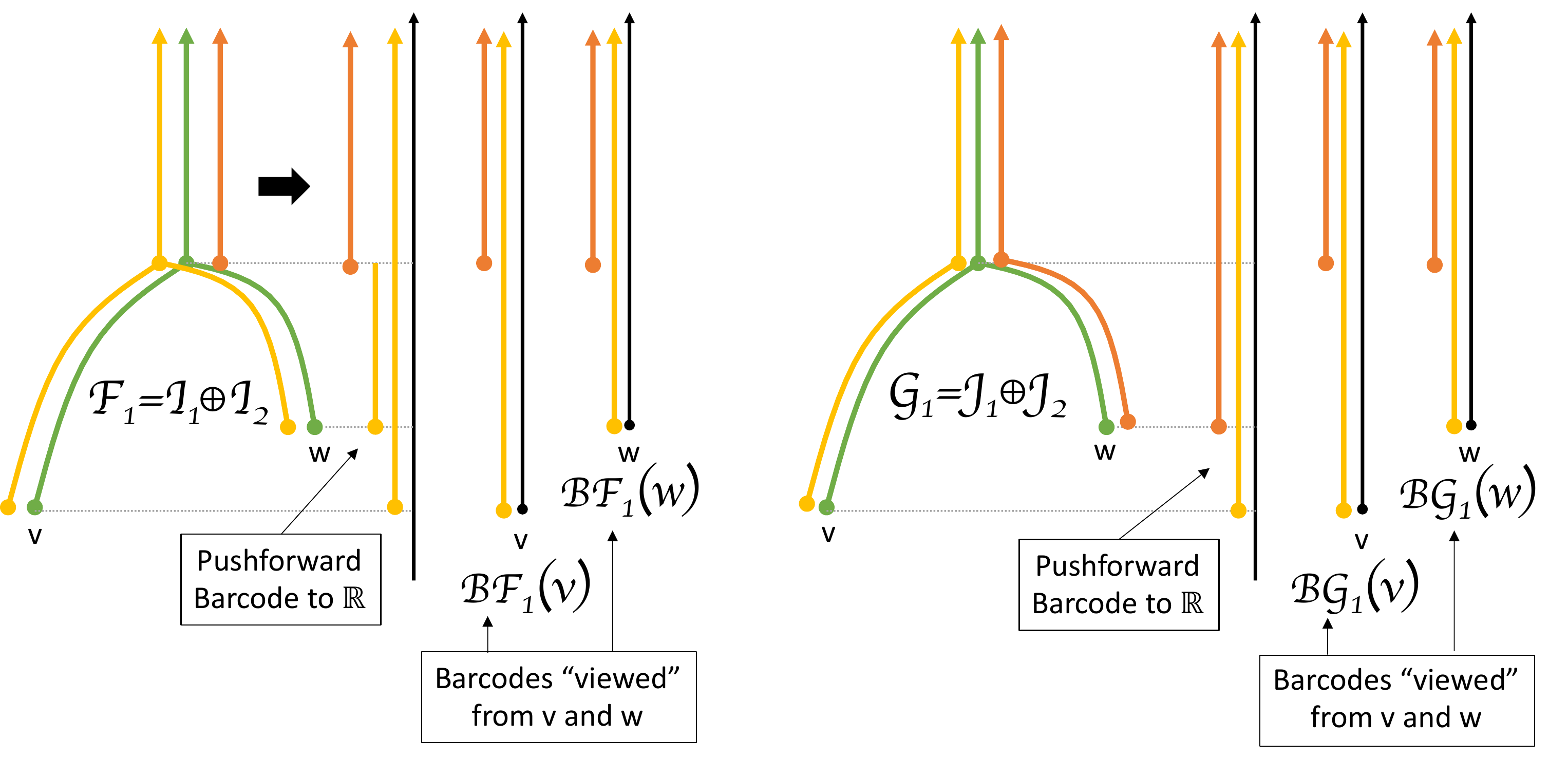}
  \caption{The Barcode Decorated Merge Tree comes from restricting a tree module to its leaf nodes, one at a time, and then calculating the barcode associated to the restriction of this tree module to the principal up set at each leaf node. In this figure two non-isomorphic tree modules are shown to have identical barcodes, when ``viewed'' from each of its leaf nodes. This example proves that the association of tree modules to their associated barcode decorations is not injective.}
  \label{fig:non-injective-barcode-decorations}
\end{figure}

\begin{definition}
Suppose $(\cM_F,\cle)$ is the generalized merge tree associated to the persistent set $\pi_0\circ F$.
Given a tree module $\mathcal{F}:(\cM_F,\cle) \to \Vect$ and a point $p=(i,s)\in \cM_F$, we define the \define{restriction of $\mathcal{F}$ to the principal up set $U_p$} to be the $\R$-module
\[
    \mathcal{F}|_{U_p} :(\R,\leq) \to \Vect \qquad \text{where, for } s\leq t, \qquad  \mathcal{F}|_{U_p}(t)=\mathcal{F}(\pi_0\circ F(s\leq t)(i),t).
\]
For $r < s$ we defined $\mathcal{F}|_{U_p}(r)=0$.
\end{definition}

\begin{proposition}\label{prop:barcode_transform}
Assume that the generalized merge tree $(\cM_F,\cle)$ has leaves.
To any pointwise finite dimensional tree module $\mathcal{F}:(\cM_F,\cle) \to \vect$, we have a leaf-decorated merge tree
\[
    \mathcal{BF}:(\cM_F,\cle) \to \BCS \qquad \text{where} \qquad \mathcal{BF}(p)=\BC(\mathcal{F}|_{U_p}).
\]
Here $U_p:=\{q\in \cM_F \mid p \cle q\}$ is the principal up set at $p$.
Note that since $(\cM_F,\cle)$ has leaves, it suffices to compute this barcode for every leaf node $v$.
\end{proposition}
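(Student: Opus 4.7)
The plan is to verify (i) that $\mathcal{BF}$ is a well-defined map $(\cM_F,\cle) \to \BCS$, and (ii) that it is determined by restriction, from which the leaf-decorated conclusion follows since $(\cM_F,\cle)$ has leaves by hypothesis.

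First I would check that for every $p=(i,s)\in\cM_F$ the restriction $\mathcal{F}|_{U_p}$ is a well-defined pointwise finite-dimensional persistence module. The values on objects are given in the definition; to assemble the transition morphisms, for $s\leq r\leq r'$ one sets
\[
\mathcal{F}|_{U_p}(r\leq r') := \mathcal{F}\bigl((\pi_0 F(s\leq r)(i), r)\cle(\pi_0 F(s\leq r')(i), r')\bigr),
\]
which makes sense because functoriality of $\pi_0\circ F$ implies the two elements of $\cM_F$ are comparable in the poset $\cle$. Functoriality of $\mathcal{F}|_{U_p}$ (composition and identities) then follows by combining functoriality of $\pi_0\circ F$ with functoriality of $\mathcal{F}$ on $(\cM_F,\cle)$. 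For $r<s$ the value is the zero space with the evident zero transitions. Because $\mathcal{F}$ lands in $\vect$, the module $\mathcal{F}|_{U_p}$ is pointwise finite-dimensional, so Crawley-Boevey's theorem applied to $\mathcal{F}|_{U_p}$ yields the barcode $\BC(\mathcal{F}|_{U_p})\in\BCS$, and $\mathcal{BF}(p):=\BC(\mathcal{F}|_{U_p})$ is a well-defined element of $\BCS$.

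Next I would verify the determined-by-restriction property. Fix $p=(i,s)\cle q=(j,t)$ in $\cM_F$, so $s\leq t$ and $j=\pi_0 F(s\leq t)(i)$. Functoriality of $\pi_0\circ F$ gives $\pi_0 F(s\leq r)(i)=\pi_0 F(t\leq r)(j)$ for all $r\geq t$; consequently $\mathcal{F}|_{U_p}(r)=\mathcal{F}|_{U_q}(r)$ and their transition maps coincide on the entire interval $[t,\infty)$. For $r<t$, on the other hand, $\mathcal{F}|_{U_q}(r)=0$ while $\mathcal{F}|_{U_p}(r)$ may be nonzero. Hence $\mathcal{F}|_{U_q}$ is the truncation of $\mathcal{F}|_{U_p}$ to $[t,\infty)$. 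Because truncation to $[t,\infty)$ is an additive operation on pointwise finite-dimensional persistence modules sending an interval module $\Bbbk_I$ to $\Bbbk_{I\cap[t,\infty)}$ (and to $0$ when $I\cap[t,\infty)=\emptyset$), it commutes with Crawley-Boevey decomposition, and so $\mathcal{BF}(q)=\mathcal{BF}(p)\cap[t,\infty)$ in the element-wise sense of Definition~\ref{defn:barcode_decorated_merge_tree}. The final remark follows immediately: every $p=(i,s)$ lies on a maximal chain which, by the leaves hypothesis, has a minimal element $v\cle p$, and then $\mathcal{BF}(p)=\mathcal{BF}(v)\cap[s,\infty)$ recovers the value at $p$ from the leaf $v$.

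The main obstacle I anticipate is the modest bookkeeping in the truncation step: one has to argue that applying ``restrict to $[t,\infty)$'' to the Crawley-Boevey decomposition of $\mathcal{F}|_{U_p}$ produces the Crawley-Boevey decomposition of $\mathcal{F}|_{U_q}$, which is what legitimizes interpreting the intersection $\mathcal{BF}(p)\cap[t,\infty)$ as an element-wise operation on barcodes. Everything else is a direct unwinding of definitions together with functoriality of $\pi_0\circ F$ and of $\mathcal{F}$.
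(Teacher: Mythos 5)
Your proof takes essentially the same route as the paper's: apply Crawley-Boevey to the pointwise finite-dimensional restriction $\mathcal{F}|_{U_p}$ to obtain a barcode, and observe that $\mathcal{F}|_{U_q}$ is a truncation of $\mathcal{F}|_{U_p}$ so the decoration is determined by restriction. You simply flesh out the paper's terse ``obviously'' by checking functoriality of the restriction and verifying that truncation to $[t,\infty)$ commutes with the interval decomposition; the paper leaves both points implicit.
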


\begin{definition}[Barcode Transform]
The map $\mathcal{BF}$ whose existence is implied by Proposition \ref{prop:barcode_transform} is referred to as the \define{barcode transform of $\mathcal{F}$}.
\end{definition}

\begin{proof}[Proof of Proposition \ref{prop:barcode_transform}]
Since the tree module is already pointwise finite dimensional, the restriction at each principal up set $U_p$ will be a pointwise finite dimensional $\R$-module. Crawley-Boevey's Theorem~\cite{crawley2015decomposition} then implies that this restricted tree module has a barcode.
Obviously the barcode decoration is determined by restriction because for any pair of comparable points $p\cle q$ the restriction at $U_q$ can be obtained by restricting the module at $U_p$.
\end{proof}

We have the following immediate corollary.

\begin{corollary}\label{cor:barcode-DMT}
If $F:(\R,\leq) \to \Top$ is a persistent space whose associated generalized merge tree $\cM_F$ has leaves and whose associated concrete decorated merge tree in degree $n\geq 0$ $\mathcal{F}_n:(\cM_F,\cle) \to \vect$ is pointwise finite dimensional, then $F$ has an associated leaf-decorated merge tree in degree $n$, $\mathcal{BF}_n$. 
\end{corollary}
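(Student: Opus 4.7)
The plan is to observe that Corollary~\ref{cor:barcode-DMT} follows immediately by specializing Proposition~\ref{prop:barcode_transform} to the tree module coming from degree-$n$ homology. First I would note that the concrete decorated merge tree $\mathcal{F}_n : (\cM_F,\cle) \to \Vect$ of Definition~\ref{defn:concrete-DMT} is, by construction, a functor from the poset underlying the generalized merge tree into $\Vect$. By Remark~\ref{rmk:generalized_concrete_DMT}, this is precisely what the paper calls a tree module, so $\mathcal{F}_n$ lies in the class of objects to which Proposition~\ref{prop:barcode_transform} applies.

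Next I would verify that the two hypotheses of Proposition~\ref{prop:barcode_transform} hold for $\mathcal{F}_n$: the generalized merge tree $\cM_F$ has leaves, by hypothesis of the corollary, and $\mathcal{F}_n$ is pointwise finite dimensional, again by hypothesis. Therefore the proposition produces a leaf-decorated merge tree $\mathcal{B}\mathcal{F}_n : (\cM_F,\cle) \to \BCS$, where $\mathcal{B}\mathcal{F}_n(p) = \BC(\mathcal{F}_n|_{U_p})$ for every $p \in \cM_F$. Defining $\mathcal{BF}_n := \mathcal{B}\mathcal{F}_n$ yields the desired leaf-decorated merge tree in degree $n$ associated to $F$. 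I do not expect any obstacles, as the corollary is essentially a rebranding of Proposition~\ref{prop:barcode_transform} in the setting where the tree module arises from applying $H_n$ to the persistently parameterized space of $F$; the only small bookkeeping step is matching the abstract notion of ``tree module'' to the concrete decorated merge tree $\mathcal{F}_n$ produced by $F$.
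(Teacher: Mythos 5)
Your proposal is correct and follows exactly the route the paper intends: the paper states this as an ``immediate corollary'' of Proposition~\ref{prop:barcode_transform}, and you correctly verify that the concrete decorated merge tree $\mathcal{F}_n$ is a tree module in the sense of Remark~\ref{rmk:generalized_concrete_DMT} and that the corollary's hypotheses (leaves and pointwise finite dimensionality) are precisely those of the proposition.
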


\begin{remark}
The barcode transform associates an (indexed) ensemble of barcodes to a filtered space through a certain ``slicing" operation (i.e., slicing along upsets from leaves). This operation is analogous to several other recent methods in the TDA literature; we provide a few examples here. The \emph{persistent homology transform}  \cite{curry2018many,turner2014persistent} associates to an embedded simplicial complex a barcode for each direction, obtained by using projection onto the direction axis as a filtration function. A \emph{fibered barcode} \cite{hang2020correspondence,lesnick2015interactive} is a collection of barcodes associated to a multiparameter persistence module by slicing the parameter space by affine lines. The \emph{barcode embedding} \cite{dey2015comparing,oudot2017barcode} associates a barcode to each point in a metric graph by computing extended persistence with respect to a radial filtration centered at that point.
\end{remark}

Leaf decorated merge trees are the most computationally tractable of the three variants of DMTs introduced in this section. Algorithms for analyzing DMTs are described in Sections \ref{sec:computational_aspects} and \ref{sec:examples}.

\section{Continuity and Stability of Decorated Merge Trees}\label{sec:stability-of-DMTs}

In this section we focus on metric theoretic aspects of decorated merge trees---namely, continuity of the barcode transform $\mathcal{BF}:\mathcal{M}_F \to \BCS$ of a decorated merge tree and stability of various pseudometrics on the space of decorated merge trees. To properly state our results, we first briefly review standard metrics in the TDA literature.

\subsection{Metrics on Persistence Modules}

Let $B,B' \in \BCS$ be barcodes.

\begin{definition}\label{defn:bottleneck-distance}
A \define{matching} of barcodes $B$ and $B'$ is a bijection $\xi$ between subsets $\mathrm{dom}(\xi)\subset B$ and $\mathrm{ran}(\xi) \subset B'$---such a $\xi$ is also commonly referred to as a \define{partial bijection} between $B$ and $B'$. The \define{cost} of a matching $\xi$ is 
\[
\max \left\{\max_{I \in \mathrm{dom}(\xi)} \|I - \xi(I)\|_\infty, \max_{I \in B \setminus \mathrm{dom}(\xi)} \|I\|_\Delta, \max_{I' \in B' \setminus \mathrm{ran}(\xi)} \|I'\|_\Delta \right\},
\]
where, for $I$ and $I'$ with endpoints $b \leq d$ and $b'\leq d'$, respectively, 
\[
\|I - I'\|_\infty := \max\{|b-b'|,|d-d'|\} \qquad \mbox{ and } \qquad \|I\|_\Delta := \frac{d-b}{2}.
\]
If a matching $\xi$ has cost less than or equal to $\epsilon$, we refer to $\xi$ as an \define{$\epsilon$-matching}. The \define{bottleneck distance} between $B$ and $B'$ is 
\[
d_B(B,B'):= \inf\{\epsilon \geq 0 \mid \mbox{there exists an $\epsilon$-matching of $B$ and $B'$}\}.
\]
A matching realizing the bottleneck distance will be called an \define{optimal matching}.
\end{definition}

The bottleneck distance is an especially natural metric due to its connection to a category-theoretic metric on persistence modules called \emph{interleaving distance}. In fact, interleaving distance can be defined for more general functor categories \cite{bubenik2015metrics,bubenik2014categorification}. Generalizing interleaving distances to increasingly abstract classes of functors is an active field of research (e.g., \cite{de2018theory,stefanou2018dynamics}), we define the interleaving distance below at the level of generality which will be most useful to us. In what follows, $\Ccat$ denotes a fixed but arbitrary category.

\begin{definition}[Interleaving Distance]\label{defn:interleaving-distance}
Consider $(\R,\leq)$ as a poset category. 
For $\e \in [0,\infty)$, we define the poset map $\sigma^\e:\R \to \R$ via $\sigma^\e:s \mapsto s +\e$. 
Now let $F:(\R,\leq) \to \Ccat$ be a functor, which is an object in the functor category $\Fun(\R,\Ccat)$. 
We define the \define{$\e$-shift of $F$}, written $F^{\e}:(\R,\leq) \to \Ccat$, as $F^\e := F \circ \sigma^{\e}$. 
This shift is functorial, i.e.~to every morphism $\varphi:F\Rightarrow G$ in $\Fun(\R,\Ccat)$ we have another morphism $\varphi^{\e}:F^{\e}\Rightarrow G^{\e}$, which defines the \define{$\e$-translation functor} $(\bullet)^{\e}:\Fun(\R,\Ccat)\to \Fun(\R,\Ccat)$.
Since each object $F$ has a naturally associated \define{internal $\e$-shift},
\[
    \eta^{\e}_F: F \Rightarrow F^{\e} \quad \text{where} \quad \eta^{\e}_F(s): F(s) \to F(\sigma^{\e}(s)) \quad \text{is} \quad F(s\leq s + \e),
\]
there is a natural transformation from the identity functor $\id_{\Ccat^{\R}}$ on $\Fun(\R,\Ccat)$ to $(\bullet)^{\e}$.

Two objects $F,G:(\R,\leq) \to\Ccat$ are \define{$\e$-interleaved} if there are morphisms $\varphi:F\Rightarrow G^{\e}$ and $\psi:G\Rightarrow F^{\e}$ such that
\[
    \psi^{\e}\circ \varphi = \eta^{2\e}_F \qquad \text{and} \qquad \varphi^{\e}\circ \psi = \eta^{2\e}_G.
\]
We define the \define{interleaving distance between $F$ and $G$} as 
\[
\dint(F,G)=\inf\{\epsilon \geq 0 \mid F \text{ and } G \mbox{ are }\e\mbox{-interleaved}\}.
\]
\end{definition}

When considering persistence modules $F,G:(\R,\leq) \to \Vect$, the above definition reduces to the classical interleaving distance of persistence modules as introduced in the landmark paper~\cite{chazal2009proximity}. Building on this work, it was shown in \cite{lesnick2015theory} that  the map taking a persistence module to its barcode is an isometry with respect to the  interleaving and bottleneck distances.

\subsection{Continuity of the Barcode Transform}

We now establish the continuity of the barcode transform $\mathcal{BF}: \cM_F \to \BCS$ associated to a concrete decorated merge tree $\cF$.
Continuity is measured with respect to a whole family of $\ell^p$-type metrics on the underlying merge tree $\cM_F$ for $p\in[1,\infty]$. 
These metrics measure distance between pairs of points via their least common ancestor.
The metric on $\BCS$ is the bottleneck distance (Definition \ref{defn:bottleneck-distance}).

\begin{definition}\label{defn:LCA-merge-height}
Let $(\cM_F,\cle)$ be a generalized merge tree where $\pi_F:\cM_F\to\R$ is the projection function. The \define{merge height} of points $u,v \in \cM_F$ is
\[
\mathrm{merge}_F(u,v) := \inf \{\pi_F(w) \mid u,v \cle w\}.
\]
We define the \define{least common ancestor} of $u$ and $v$ to be the least common upper bound of $u$ and $v$,
\[
\mathrm{LCA}_{F}(u,v) := \mathrm{argmin}\{\pi_F(w) \mid u,v \cle w\},
\]
when it exists. In this case, 
\[
\mathrm{merge}_F(u,v) = \pi_F(\mathrm{LCA}_F(u,v)).
\]
We will drop the subscript $F$ when convenient.
Finally, if every pair of points in a merge tree have a least common ancestor, then we say the merge tree is \define{connected}.
\end{definition}

\begin{remark}
Note that for a generalized merge tree the least common ancestor need not exist.
For example if $S:(\R,\leq)\to\Set$ is the persistent set that assigns $S(t)=\{x,y\}$ for $t\leq 0$ and $S(t)=\{z\}$ for $t>0$, then there will not be a least upper bound for $x,y$.
The assumption that $S$ is constructible (Definition \ref{defn:constructible-persistent-set}) will, however, guarantee the existence of least common ancestors.
\end{remark}

\begin{definition}
If the generalized merge tree $\cM_F$ is connected,
then the \define{$\ell^p$ metric on $(\cM_F,\cle)$} for $1\leq p \leq \infty$ associates to every pair of points $u,v\in\cM_F$ the value

\[
    d^p_{\cM_F}(u,v)=\|(\mathrm{merge}(u,v)-\pi(u),\mathrm{merge}(u,v)-\pi(v))\|_p.
\]

\end{definition}

Using general estimates for $\ell^p$ norms on $\R^2$, one sees that the $\ell^p$ metrics are bi-Lipschitz equivalent and therefore induce the same topology. Moreover, we have the following characterization when $\cM_F$ comes from a sublevel set filtration.

\begin{proposition}\label{prop:merge_tree_metrics}
Let $f:X \to \R$ be a continuous map such that the associated merge tree $\cM_f$ has finitely many leaves. Then the topology induced by each $\ell^p$ metric coincides with the quotient space topology.
\end{proposition}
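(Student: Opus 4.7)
The plan is to reduce to a single value of \(p\), use the finitely-many-leaves hypothesis to get a manageable combinatorial description of \(\cM_f\), and then verify that both topologies share a common neighborhood basis at every point.

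\textbf{Reduction.} The \(\ell^p\) norms on \(\R^2\) are pairwise bi-Lipschitz equivalent, so the metrics \(d^p_{\cM_f}\) are pairwise bi-Lipschitz on \(\cM_f\) and therefore induce the same topology. I will work with \(p=\infty\), where
\[
    d^\infty_{\cM_f}(u,v)=\max\bigl(\mathrm{merge}(u,v)-\tilde\pi_f(u),\ \mathrm{merge}(u,v)-\tilde\pi_f(v)\bigr).
\]
A direct consequence is that \(\tilde\pi_f:\cM_f\to\R\) is 1-Lipschitz with respect to \(d^\infty\), which gives automatic continuity in one direction.

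\textbf{Local structure of \(\cM_f\).} Under the finitely-many-leaves hypothesis, together with the existence results of Appendix~\ref{sec:existence_of_merge_tree}, \(\cM_f\) inherits the structure of a rooted finite one-dimensional CW complex whose zero-cells are the leaves and branch points, whose one-cells are intervals of \(\R\) (possibly half-open rays going to \(+\infty\)) glued by the order \(\cle\), and on each of which \(\tilde\pi_f\) is the standard linear embedding into \(\R\). I will use this description to analyze neighborhoods at each type of point.

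\textbf{Matching neighborhood bases.} For \(u\) in the interior of an edge, any \(v\) with \(d^\infty(u,v)<\epsilon\) small enough must be comparable to \(u\) (otherwise \(\mathrm{merge}(u,v)\) would exceed the height of the next branch point above \(u\)), so the \(d^\infty\)-ball is an open sub-interval of the edge, which is a basic neighborhood in the quotient topology. For a leaf \(\ell\), the ball \(B_\epsilon(\ell)=\{v:\ell\cle v,\ \tilde\pi_f(v)-\tilde\pi_f(\ell)<\epsilon\}\) is an initial segment of the unique edge above \(\ell\), again a basic quotient-open neighborhood. For an internal branch point \(b\) with its finitely many incident edges, the ball \(B_\epsilon(b)\) decomposes (for small \(\epsilon\)) as the union of half-open sub-edges of length \(\epsilon\): the upward piece lies on the unique edge above \(b\), and on each edge below \(b\) we get exactly the points \(v\) with \(\tilde\pi_f(v)>\tilde\pi_f(b)-\epsilon\), since any such \(v\) satisfies \(\mathrm{merge}(v,b)=b\). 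These \(\epsilon\)-stars form a neighborhood basis for the Reeb-graph quotient topology at \(b\), by the definition of the quotient along \(\sim\). Hence both topologies have the same local bases and thus agree.

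\textbf{Main obstacle.} The delicate point is the second step: justifying that under the finitely-many-leaves hypothesis the Reeb-graph quotient really does have the expected finite CW structure, with edges parameterized by height and branches indexed by the poset \(\cle\), so that the branch-point neighborhood description is both unambiguous and exhaustive. This is exactly where the appeal to Appendix~\ref{sec:existence_of_merge_tree} is essential; once that combinatorial picture is available, the rest of the argument is the routine local comparison sketched above.
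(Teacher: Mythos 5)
Your route is genuinely different from the paper's, but it has a real gap in the step you yourself flag as delicate. The paper does not compare the $\ell^p$ topology directly to the quotient topology; it introduces an intermediate \emph{interval topology} (Definition~\ref{defn:interval-topology}), proves in Proposition~\ref{prop:interval_topology} that the interval and quotient topologies agree when there are finitely many leaves, and then compares the metric topology to the interval topology on the finite closed cover by upper sets $C_i$ of leaves. On each $C_i$ the height map is a homeomorphism onto a ray, so all three topologies trivially agree there, and the finite-closed-cover principle (``open iff relatively open on each $C_i$'') finishes the job. Your argument instead works directly against the quotient topology via local bases at leaves, edge interiors, and branch points.

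Two things go wrong in your version. First, Appendix~\ref{sec:existence_of_merge_tree} does not establish a finite CW structure on $\cM_f$; it establishes that the metric merge tree is an $\R$-tree. A finite CW (or finite-branch-point) structure does follow from the finitely-many-leaves hypothesis, but you would have to prove it --- e.g.\ by observing that the $\leq \binom{n}{2}$ pairwise merge heights of the $n$ leaves are the only possible branch heights --- rather than cite an appendix that does not say it. Second, and more substantially, the sentence ``These $\epsilon$-stars form a neighborhood basis for the Reeb-graph quotient topology at $b$, by the definition of the quotient along $\sim$'' is the entire content of the proposition and is not at all immediate from the definition of a quotient. The quotient topology on a Reeb graph can be badly behaved, and showing that small $\epsilon$-stars are saturated-open (or more precisely that every quotient-open neighborhood of $b$ contains one) is exactly what the machinery of Corollary~\ref{cor:tree_poset}, Lemmas~\ref{lem:upper_set}--\ref{lem:upper_map}, and Proposition~\ref{prop:interval_topology} is built to deliver. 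As written you are assuming the hard direction. If you replace that sentence with an invocation of Proposition~\ref{prop:interval_topology} and then compare $\ell^p$-balls to interval-open sets (which is easy on each $C_i$, since the height map is an isometry there up to a bi-Lipschitz constant), your argument collapses into essentially the paper's.

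Minor point: the reduction to $p=\infty$ via bi-Lipschitz equivalence is correct but unnecessary, since the comparison on each chain $C_i$ works uniformly in $p$.
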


We defer the proof of the proposition to Appendix \ref{sec:interval-topology}, since it relies on other technical results about merge tree topologies.

\begin{theorem}[Continuity of Barcode DMTs]\label{thm:continuity_barcode_DMT}
Let $(\cM_F,\cle)$ be a connected generalized merge tree associated to a persistent set $\pi_0\circ F$. Endow $\cM_F$ with the extended metric $d^p_{\cM_F}$ and $\BCS$ with the bottleneck distance $d_B$. For any pointwise finite dimensional tree module $\mathcal{F}:(\cM_F,\cle)\to \vect$, the associated barcode decorated merge tree $\mathcal{BF} :\cM_F \to \BCS$ is $2^{1-1/p}$-Lipschitz for $p \in [1,\infty)$ and $2$-Lipschitz for $p = \infty.$
\end{theorem}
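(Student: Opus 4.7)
The plan is to bound $d_B(\mathcal{BF}(u), \mathcal{BF}(v))$ by routing through the least common ancestor $w = \mathrm{LCA}(u,v)$, then invoking the triangle inequality for bottleneck distance together with a classical $\ell^p$-norm inequality on $\R^2$. Set $a = \pi(u)$, $b = \pi(v)$, $c = \pi(w)$, $\alpha = c - a$, and $\beta = c - b$, so that $d^p_{\cM_F}(u,v) = \|(\alpha, \beta)\|_p$. Connectedness of $\cM_F$ guarantees that $w$ exists.

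First I would establish the two ``half-bounds'' $d_B(\mathcal{BF}(u), \mathcal{BF}(w)) \leq \alpha$ and $d_B(\mathcal{BF}(v), \mathcal{BF}(w)) \leq \beta$ by building explicit interleavings between the pointwise finite dimensional persistence modules $\mathcal{F}|_{U_u}$, $\mathcal{F}|_{U_v}$, and $\mathcal{F}|_{U_w}$, then invoking the isometry between interleaving and bottleneck distances of \cite{lesnick2015theory}. The key observation is that whenever $t \geq c$, the defining persistent set forces the component of $u$ at time $t$ and the component of $w$ at time $t$ to coincide, so $\mathcal{F}|_{U_u}(t) = \mathcal{F}|_{U_w}(t)$. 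Accordingly, define $\varphi: \mathcal{F}|_{U_u} \Rightarrow \mathcal{F}|_{U_w}[\alpha]$ to be the structure map of $\mathcal{F}|_{U_u}$ for $t \geq a$ (the target lies in the merged region $t+\alpha \geq c$ where the two modules agree) and the zero map for $t < a$; symmetrically define $\psi: \mathcal{F}|_{U_w} \Rightarrow \mathcal{F}|_{U_u}[\alpha]$ using the structure map for $t \geq c$ and zero otherwise. Verifying $\psi[\alpha] \circ \varphi = \eta^{2\alpha}$ and $\varphi[\alpha] \circ \psi = \eta^{2\alpha}$ reduces on nontrivial strata to functoriality of $\mathcal{F}|_{U_u}$ and to the trivial equality $0 = 0$ on the zero strata. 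The argument with $w$ replaced by $v$ via the symmetric construction yields the second half-bound.

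Second, the triangle inequality for bottleneck distance and the two half-bounds give
\[
d_B(\mathcal{BF}(u), \mathcal{BF}(v)) \leq d_B(\mathcal{BF}(u), \mathcal{BF}(w)) + d_B(\mathcal{BF}(w), \mathcal{BF}(v)) \leq \alpha + \beta = \|(\alpha,\beta)\|_1.
\]
Applying the standard norm comparison $\|x\|_1 \leq n^{1-1/p}\|x\|_p$ on $\R^n$ with $n=2$ (and the limiting interpretation $2^{1-1/\infty} = 2$) yields $\alpha + \beta \leq 2^{1-1/p}\|(\alpha, \beta)\|_p = 2^{1-1/p}\, d^p_{\cM_F}(u,v)$, which is the claimed Lipschitz bound.

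The main obstacle will be careful verification of the interleaving relations, particularly tracking the identification $\mathcal{F}|_{U_u}(t) = \mathcal{F}|_{U_w}(t)$ across the regime change at $t = c$ and ensuring functoriality correctly handles the transitions at $t = a$ and $t = c$ where the relevant modules become nonzero or enter the merged region. A subtler point worth noting is that the bound $\alpha + \beta$ is not sharp: an analogous direct interleaving between $\mathcal{F}|_{U_u}$ and $\mathcal{F}|_{U_v}$ with shift $\max(\alpha, \beta)$ can be built, yielding the better constant $1$ for $p = \infty$. The routing through $w$ is nevertheless the cleanest unified argument, and delivers the stated constants for all $p\in[1,\infty]$ simultaneously.
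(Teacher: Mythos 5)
Your argument follows essentially the same route as the paper's: route through the least common ancestor $w$, build explicit interleavings between the restricted modules $\mathcal{F}|_{U_u}$, $\mathcal{F}|_{U_v}$, $\mathcal{F}|_{U_w}$, apply Lesnick's isometry and the triangle inequality to obtain the $\ell^1$ bound $\alpha+\beta$, and finish with the standard norm comparison. Your closing observation is also correct and worth flagging: a direct $\max(\alpha,\beta)$-interleaving between $\mathcal{F}|_{U_u}$ and $\mathcal{F}|_{U_v}$ gives the sharper Lipschitz constant $1$ with respect to $d^\infty$, and since $\|\cdot\|_\infty \leq \|\cdot\|_p$ this in fact improves the constant to $1$ for every $p$, so the constants stated in the theorem, while valid, are not optimal.
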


\begin{proof}
Suppose that $p=(i,s)$ and $q=(j,t)$ are carried to $z=(k,r)$ via $\pi_0\circ F$. 
This means that $d^1_{\cM_F}(p,q)\leq (r-s) + (r-t)$
There is an obvious $\epsilon_1:=r-s$ interleaving between the $\R$-modules $\mathcal{F}|_{U_p}$ and $\mathcal{F}|_{U_z}$.
To see this, note that there is a natural morphism of $\R$-modules $\mathcal{F}|_{U_z} \to \mathcal{F}|_{U_p}$ given by the 0 map up to, but not including, $r$. For real values greater than $r$ this morphism is the identity map.
The other morphism that participates in an interleaving is given by the internal morphisms from $\mathcal{F}|_{U_p}$ to $\mathcal{F}|_{U_z}^{\epsilon_1}$.
This proves that there is an $\epsilon_1$-interleaving between these restrictions.
The exact same argument shows that there is an $\epsilon_2=r-t$ interleaving between the restrictions $\mathcal{F}|_{U_q}$ and $\mathcal{F}|_{U_z}$.
The triangle inequality for the interleaving distance proves that there is at most an $\epsilon_1+\epsilon_2$ interleaving between $\mathcal{F}|_{U_p}$ and $\mathcal{F}|_{U_r}$.
By Lesnick's isometry theorem \cite{lesnick2015theory}, this implies that the bottleneck distance between $\BC(\mathcal{F}|_{U_p})$ and $\BC(\mathcal{F}|_{U_r})$ is at most $\epsilon_1+\epsilon_2$. This proves that the map is $1$-Lipschitz with respect to $d^1_{\cM_F}$ and the remaining cases follow by the general bounds $\|\cdot\|_1 \leq 2^{1-1/p}\|\cdot\|_p$, for $p \in [1,\infty)$ and $\|\cdot\|_1 \leq 2\|\cdot\|_\infty$ in $\R^2$.
\end{proof}

\subsection{Interleavings of Merge Trees}\label{sec:merge-tree-interleavings-comparison}

Implicit in Definition \ref{defn:interleaving-distance} is a way of comparing generalized merge trees.
If $F$ and $G$ are persistent spaces, then the interleaving distance between $\pi_0 \circ F$ and $\pi_0\circ G$, which are both objects of $\Fun(\R,\Set)$, is covered by that definition.
However, the original definition of the merge tree interleaving distance given in~\cite{mbw13} used the classical merge tree construction (Definition \ref{defn:merge-tree-as-reeb-graph}) and assumed continuity of the interleaving maps.
In this section we review the construction of~\cite{mbw13} and prove that under suitable hypotheses this classical interleaving distance is the same as the interleaving distance implied by Definition \ref{defn:interleaving-distance}, which we have isolated as Definition \ref{D:dint-tree}.

\begin{definition}[Classical Merge Tree Interleaving Distance \cite{mbw13}]
Let $f:X \to \R$ and $g: Y \to \R$ be continuous functions and let $\cM_f$ and $\cM_g$ be their associated merge trees, as defined in Definition~\ref{defn:merge-tree-as-reeb-graph}.
For $\epsilon\geq 0$ we define an \define{$\epsilon$-map} to be a continuous (with respect to quotient space topologies) map $\alpha:\cM_f\rightarrow \cM_g$ such that 
\[
\tilde{\pi}_{g}\circ\alpha([x],t)=t+\epsilon
\]
for all points $([x],t)\in \cM_f$. Two $\epsilon$-maps $\alpha:\cM_f\rightarrow \cM_g$ and $\beta:\cM_g\rightarrow \cM_f$ are said to be \define{$\epsilon$-compatible} if
$\beta\circ\alpha=\eta_f^{2\epsilon}$ and $\alpha\circ\beta=\eta_g^{2\epsilon}$. The \define{Morozov-Bekatayev-Weber (MBW) interleaving distance between merge trees $\cM_f$ and $\cM_g$} is
\[
\theta^{MBW}_I(\cM_f,\cM_g):=\inf\{\epsilon\geq 0 \mid \exists \text{ $\e$-compatible maps $\alpha:\cM_f \to \cM_g$ and $\beta:\cM_g \to \cM_f$}\}.
\]

\end{definition}

Given two functions $f:X \to \R$ and $g:Y\to \R$, we note that their associated sublevel set filtrations determine persistent spaces $F,G:(\R,\leq) \to \Top$. 
By post-composing these functors with $\pi_0:\Top\to\Set$, we get two persistent sets, which can be compared with Definition \ref{defn:interleaving-distance} when $\Ccat=\Set$.
We isolate this special case now.

\begin{definition}[Modern Merge Tree Interleaving Distance~\cite{bubenik2015metrics}]\label{D:dint-tree}
The \define{interleaving distance between merge trees} $\cM_f$ and $\cM_g$ associated to $f:X\to \R$ and $g:Y\to \R$ is 
defined to be the interleaving distance between the persistent sets $\pi_0\circ F$ and $\pi_0\circ G$ where $F:(\R,\leq) \to \Top$ is defined by $F(t):=f^{-1}(-\infty,t]$ and $G:(\R,\leq) \to \Top$ is defined by $G(t):=g^{-1}(-\infty,t]$.
We introduce the special notation
\[
    \theta_I(\cM_f,\cM_g):=d_I(\pi_0\circ F,\pi_0\circ G).
\]
\end{definition}

We now compare these two definitions of interleaving distance.
As a preliminary observation, note that
an $\epsilon$-map $\alpha: \cM_f\to \cM_g$ carries components of $f^{-1}(-\infty,t]$ to $g^{-1}(-\infty,t+\epsilon]$.
This is exactly the expression that $\alpha$ defines a natural transformation $\alpha: \pi_0 \circ F \Rightarrow \pi_0 \circ G^{\epsilon}$. 

\begin{lemma}\label{lem:tree-interleaving-bound}
If $\alpha:\cM_f \to \cM_g$ and $\beta:\cM_g \to \cM_f$ are $\e$-maps that are $\e$-compatible, then they induce an $\e$-interleaving between the persistent sets $\pi_0\circ F$ and $\pi_0\circ G$ of the sublevel set filtrations of $f:X \to \R$ and $g: Y \to \R$. 
\end{lemma}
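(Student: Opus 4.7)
The plan is to construct natural transformations directly from the $\epsilon$-maps $\alpha$ and $\beta$ by exploiting the identification between level sets of the merge tree projection and sets of connected components of sublevel sets. The key identification, implicit in Definition~\ref{defn:merge-tree-as-reeb-graph}, is that for each $t\in\R$ the level set $\tilde{\pi}_f^{-1}(t) \subset \cM_f$ is canonically in bijection with $\pi_0(F(t))=\pi_0(f^{-1}(-\infty,t])$, because a point of the Reeb quotient at height $t$ is by construction a connected component of the level set of the projection from the epigraph, which corresponds to a connected component of the sublevel set.

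First I would use the level-preservation property $\tilde{\pi}_g\circ \alpha = \tilde{\pi}_f + \epsilon$ to define, for every $t\in\R$, a function
\[
    \tilde{\alpha}_t:\pi_0(F(t))\to \pi_0(G(t+\epsilon))
\]
as the restriction of $\alpha$ to $\tilde{\pi}_f^{-1}(t)$ under the canonical identifications above. The collection $\{\tilde{\alpha}_t\}$ is a candidate for a natural transformation $\tilde{\alpha}:\pi_0\circ F \Rightarrow \pi_0\circ G^{\epsilon}$, and analogously $\beta$ yields $\tilde{\beta}:\pi_0\circ G \Rightarrow \pi_0 \circ F^{\epsilon}$.

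Second I would verify naturality: for $s\leq t$, one must check the square
\[
    \pi_0(G^\epsilon(s\leq t))\circ \tilde{\alpha}_s = \tilde{\alpha}_t\circ \pi_0(F(s\leq t)).
\]
In the merge tree picture, the map $\pi_0(F(s\leq t))$ sends a component of $F(s)$ to the unique component of $F(t)$ containing it, which in $\cM_f$ is exactly the passage from a point $p\in \tilde{\pi}_f^{-1}(s)$ to the unique point above it at height $t$ along the partial order $\cle$. Continuity of $\alpha$ with respect to the quotient topologies forces it to carry the (connected) arc from $p$ to its ancestor in $\cM_f$ to the corresponding arc in $\cM_g$ shifted by $\epsilon$, because the preimages of single arcs of $\cM_g$ under $\alpha$ are relatively clopen in the preimage of a strip of heights. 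This is the step I expect to be the main obstacle, since one needs to argue carefully that a level-preserving continuous map between classical merge trees respects the ``moving up along $\cle$'' operation, which is purely order-theoretic on the persistent-set side.

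Finally I would check the $\epsilon$-interleaving equations. The compatibility condition $\beta\circ\alpha=\eta_f^{2\epsilon}$ means that for each point of $\cM_f$ at height $t$, the composition $\beta\circ\alpha$ lands at the unique ancestor at height $t+2\epsilon$. Translating via the identification above, this reads
\[
    \tilde{\beta}_{t+\epsilon}\circ \tilde{\alpha}_t = \pi_0\bigl(F(t\leq t+2\epsilon)\bigr) = (\eta^{2\epsilon}_{\pi_0\circ F})_t,
\]
which is exactly $\tilde{\beta}^{\epsilon}\circ\tilde{\alpha} = \eta^{2\epsilon}_{\pi_0\circ F}$ required by Definition~\ref{defn:interleaving-distance}. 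The symmetric identity follows from $\alpha\circ\beta=\eta_g^{2\epsilon}$, completing the interleaving and hence the proof.
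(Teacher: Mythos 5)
Your approach is essentially the same as the paper's, which treats the passage from an $\epsilon$-map $\alpha$ to a natural transformation $\pi_0\circ F\Rightarrow\pi_0\circ G^{\epsilon}$ as an immediate consequence of the identification $\tilde\pi_f^{-1}(t)\cong\pi_0(F(t))$ and disposes of the interleaving equations exactly as you do in your final step. Where you diverge is in emphasis: the paper asserts (via its preliminary observation) that the natural-transformation structure is obtained ``without referring to continuity,'' whereas you correctly recognize that naturality of $\{\tilde\alpha_t\}$---equivalently, monotonicity of $\alpha$ with respect to $\cle$---is the nontrivial content and you invoke continuity plus level-preservation to establish it. That instinct is sound, but your justification is somewhat loose: a strip $\tilde\pi_g^{-1}([a,b])$ of a merge tree is not a disjoint union of arcs (branches can merge within the strip), so the ``relatively clopen preimages of arcs'' picture needs to be replaced by something like a connectedness/continuity-of-the-order argument along the image of the arc $\{\gamma(r)\}_{r\in[\pi_f(p),\pi_f(q)]}$, using that principal up-sets are closed (cf.\ Lemma~\ref{lem:upper_set}). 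With that repair the argument is a faithful elaboration of the proof the paper leaves terse.
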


\begin{proof}

The discussion above has already established that and $\e$-map $\alpha$ is equivalent to specifying a natural transformation $\alpha: \pi_0\circ F \Rightarrow \pi_0\circ G^{\epsilon}$ and that $\beta$ is likewise tantamount to a natural transformation $\beta: \pi_0\circ G \Rightarrow \pi_0\circ F^{\epsilon}$ \emph{without referring to continuity}.
By comparing with Definition~\ref{defn:interleaving-distance} it is now obvious that setting $\alpha$ to be $\varphi$ and $\beta$ to be $\psi$ and enforcing the $\epsilon$-compatibility condition then implies the interleaving condition defined there.
\end{proof}

We note that because an $\e$-interleaving of persistent sets does not necessarily guarantee continuity of the maps between the merge trees $\cM_f$ and $\cM_g$, Lemma \ref{lem:tree-interleaving-bound} establishes the bound:
\[
    \theta_I(\cM_f,\cM_g)\leq \theta_I^{MBW}(\cM_f,\cM_g).
\]
In the next proposition we provide hypotheses under which these two interleaving distances are the same. 
This fills a gap in the literature that is not usually remarked upon and helps bridge the gap between~\cite{mbw13} and~\cite{bubenik2015metrics}.

\begin{proposition}\label{prop:two-interleavings-same}
Suppose $f:X \to \R$ and $g:Y\to \R$ are continuous maps defined on compact spaces $X$ and $Y$ so that their (classical) merge trees $\cM_f$ and $\cM_g$ have finitely many leaves.
Let $F$ and $G$ denote the sublevel set filtrations of $f$ and $g$.
Every $\epsilon$-interleaving of the persistent sets $\pi_0\circ F$ and $\pi_0 \circ G$ defines a pair of $\epsilon$-compatible maps between the merge trees $\cM_f$ and $\cM_g$. Consequently,
\[
\theta_I^{MBW}(\cM_f,\cM_g)=\theta_I(\cM_f,\cM_g)=d_I(\pi_0\circ F,\pi_0\circ G).
\]
\end{proposition}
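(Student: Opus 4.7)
The forward inequality $\theta_I(\cM_f, \cM_g) \leq \theta_I^{MBW}(\cM_f, \cM_g)$ is already given by Lemma~\ref{lem:tree-interleaving-bound}, so the plan is to produce, from any $\epsilon$-interleaving $(\varphi,\psi)$ of the persistent sets $\pi_0\circ F$ and $\pi_0\circ G$, a pair of $\epsilon$-compatible continuous maps between the classical merge trees. My first step would be to identify the underlying set of each classical merge tree with its display poset: under the compactness and finite-leaves hypotheses, Appendix~\ref{sec:interval-topology} provides exactly such an identification $\cM_f \cong \cM_F$ (and likewise for $g$). Under this identification, the components $\varphi_s:\pi_0(F(s))\to\pi_0(G(s+\epsilon))$ of the natural transformation $\varphi$ assemble into a set-theoretic height-shifting map $\alpha:\cM_f\to\cM_g$ given by $(i,s)\mapsto(\varphi_s(i),s+\epsilon)$, and similarly $\beta$ is built from $\psi$. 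The $\epsilon$-map condition $\tilde{\pi}_g\circ\alpha=\tilde{\pi}_f+\epsilon$ holds by construction, and the interleaving identities $\psi^\epsilon\circ\varphi=\eta^{2\epsilon}_{\pi_0 F}$, $\varphi^\epsilon\circ\psi=\eta^{2\epsilon}_{\pi_0 G}$ translate directly into the MBW compatibility relations $\beta\circ\alpha=\eta_f^{2\epsilon}$ and $\alpha\circ\beta=\eta_g^{2\epsilon}$.

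The main obstacle is continuity of $\alpha$ and $\beta$ with respect to the quotient topologies on the classical merge trees, since the categorical interleaving supplies only set-theoretic data. The key device I would invoke is Proposition~\ref{prop:merge_tree_metrics}, which under our hypotheses guarantees that the quotient topology on each $\cM_f$ coincides with the topology generated by any $\ell^p$ merge-height metric. Continuity thus reduces to establishing a Lipschitz estimate, which I would carry out in the $\ell^\infty$ metric.

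The Lipschitz estimate itself should follow cleanly from naturality of $\varphi$. Given $p=(i,s)$ and $q=(j,t)$ in $\cM_f$ with least common ancestor $(k,r)$, so that $\mathrm{merge}_f(p,q)=r$, naturality applied to the relations $(i,s)\cle(k,r)$ and $(j,t)\cle(k,r)$ forces both $\varphi_s(i)$ and $\varphi_t(j)$ to be carried by internal transitions of $\pi_0\circ G$ onto $\varphi_r(k)\in\pi_0(G(r+\epsilon))$. Hence $(\varphi_r(k),r+\epsilon)$ is a common upper bound for $\alpha(p)$ and $\alpha(q)$ in $\cM_g$, yielding $\mathrm{merge}_g(\alpha(p),\alpha(q))\leq r+\epsilon$ and therefore
\[
d^\infty_{\cM_g}(\alpha(p),\alpha(q))\;\leq\;\max\{r-s,\,r-t\}\;=\;d^\infty_{\cM_f}(p,q).
\]
So $\alpha$ is $1$-Lipschitz and hence continuous; $\beta$ is handled symmetrically. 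Combining this continuity with the $\epsilon$-map and $\epsilon$-compatibility identities established above produces a valid MBW interleaving at level $\epsilon$, giving $\theta_I^{MBW}(\cM_f,\cM_g)\leq\epsilon$ and thereby the claimed equality. The delicate part of the argument is really the identification step with the display poset and the invocation of Proposition~\ref{prop:merge_tree_metrics}; once those are in hand, the naturality computation is automatic.
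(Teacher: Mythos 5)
Your proposal is correct, and it diverges from the paper's proof precisely at the continuity step. The paper works directly with the interval topology on the display poset: it observes that the principal up sets at leaves give a finite closed cover (Lemma~\ref{lem:upper_set}), shows the interleaving maps are continuous when restricted to each such up set, and invokes the closed-cover gluing principle together with Proposition~\ref{prop:interval_topology} to conclude continuity for the quotient topology. You instead route through Proposition~\ref{prop:merge_tree_metrics} (that the $\ell^p$-metric topology equals the quotient topology) and prove that $\alpha$ is $1$-Lipschitz for $d^\infty$: naturality of $\varphi$ gives $\alpha(p),\alpha(q)\cle(\varphi_r(k),r+\epsilon)$ whenever $(k,r)$ is a common upper bound of $p,q$, so $\mathrm{merge}_g(\alpha(p),\alpha(q))\leq\mathrm{merge}_f(p,q)+\epsilon$, and the $\epsilon$'s cancel against the shifted heights. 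The two approaches ultimately rest on the same Appendix~\ref{sec:interval-topology} machinery (indeed Proposition~\ref{prop:merge_tree_metrics} is itself proved via Proposition~\ref{prop:interval_topology}), but your Lipschitz estimate is quantitative and conceptually tidier, whereas the paper's closed-cover argument is topologically more elementary and avoids discussing metrics at all. One small point worth making explicit: the $\ell^p$ metrics are only defined on ``connected'' merge trees (Definition~\ref{defn:LCA-merge-height}); either note that compactness of $X$ with the finite-leaf hypothesis forces the LCA of any two points to exist (this follows from Lemma~\ref{lem:poset_characterization}), or observe that the estimate still holds trivially in the extended-metric sense when $\mathrm{merge}_f(p,q)=\infty$.
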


\begin{proof}
Suppose $\phi:\pi_0\circ F \Rightarrow \pi_0\circ G^{\e}$ and $\psi:\pi_0\circ G \Rightarrow \pi_0\circ F^{\e}$ specify an $\e$-interleaving.
Now consider the display posets of $\pi_0\circ F$ and $\pi_0\circ G$, written $\cM_F$ and $\cM_G$ to distinguish them from the merge trees $\cM_f$ and $\cM_g$.
It is obvious that $\cM_F$ and $\cM_f$ are identical as posets, similarly for $\cM_G$ and $\cM_g$.
Following the discussion in Appendix~\ref{sec:interval-topology} we can equip $\cM_F$ and $\cM_G$ with the \emph{interval topology} (see Definition~\ref{defn:interval-topology}).
By Lemma~\ref{lem:upper_set} the principal up set of any point is closed in this topology.
Since $\cM_F$ and $\cM_G$ have finitely many leaves, consider the closed cover of each by the principal up sets at each of the leaf nodes.
It is easy to see that the map $\phi:\cM_F \to \cM_G$ is continuous when restricted to the up set of any leaf node in $\cM_F$. Since a map is continuous if its restricted to any member of a finite closed cover, we conclude that $\phi:\cM_F \to \cM_G$ is continuous with respect to the interval topology.
A completely symmetric argument proves that $\psi:\cM_G \to \cM_F$ is continuous with respect to the interval topology as well.
By Proposition~\ref{prop:interval_topology} we can conclude that the interval topology and the quotient topology are the same, thus $\phi$ and $\psi$ can be viewed as $\e$-maps (with the continuity assumption) that are $\e$-compatible. The claim now follows.
\end{proof}

\subsection{A Decorated Bottleneck Distance}\label{sec:decorated-bottleneck-distance}

In this section we leverage the barcode decorated merge tree perspective to define a second distance that uses an interleaving of merge trees along with an $\e$-matching of the barcodes over each of the merge trees, as determined by their decorations.
Although finding optimal interleavings between merge trees is NP-Hard \cite{agarwal2018computing}, we provide in Section \ref{sec:computational_aspects} an algorithm for estimating them.

\begin{definition}\label{defn:matchings_DMTs}

Assume that $\cM_F$ and $\cM_G$ are generalized merge trees associated to the persistent sets $\pi_0\circ F$ and $\pi_0\circ G$.
Given two barcode decorated merge trees
\[
\mathcal{B}_F:\cM_F \to\BCS \qquad \text{and} \qquad \mathcal{B}_G:\cM_G\to \BCS
\]
we define an \define{($\e,\delta)$-matching of $\mathcal{B}_F$ and $\mathcal{B}_G$} to consist of
\begin{itemize}
    \item an $\e$-interleaving of the underlying generalized merge trees i.e. natural transformations $\phi:\pi_0\circ F \Rightarrow \pi_0 \circ G^{\e}$ and $\psi:\pi_0\circ G\Rightarrow \pi_0 \circ F^{\e}$ that satisfy $\psi^{\e}\circ\phi=\eta_F^{2\e}$ and $\phi^{\e}\circ \psi = \eta_G^{2\e}$. We will abuse notation and write $\phi:\cM_F \to \cM_G$ and $\psi:\cM_G \to \cM_F$ to make clear that interleaving of persistent sets always provide $\e$-compatible maps in the sense of~\cite{mbw13} (without the continuity assumption) between the associated generalized merge trees;
    
    \item a $\delta$-matching of the barcodes $\mathcal{B}_F(p)$ and $\mathcal{B}_G(\phi(p))$ for every $p\in \cM_F$ and a $\delta$-matching of the barcodes $\mathcal{B}_G(q)$ and $\mathcal{B}_F(\psi(q))$ for every $q\in \cM_G$.
\end{itemize}
\end{definition}

\begin{definition}\label{defn:decorated-bottleneck-distance}
Let $p \in [1,\infty]$. The \define{decorated bottleneck $p$-distance} between two barcode decorated merge trees $\mathcal{B}_F$ and $\mathcal{B}_G$ is defined as
\[
d_{B,p}(\mathcal{B}_F,\mathcal{B}_G) := \inf \{\|(\e,\delta)\|_p \mid \mbox{$\exists$ $(\e, \delta)$-matching of $\mathcal{B}_F$ and $\mathcal{B}_G$}\}.
\]
When $p=\infty$, we refer to the metric simply as the \define{decorated bottleneck distance} and write $d_B := d_{B,\infty}$.
\end{definition}

\begin{remark}
We use the notation $d_B$ for both the bottleneck distance between barcodes and for the decorated bottleneck $\infty$-distance to emphasize the connection between the metrics. The meaning of $d_B$ should always be clear from context.
\end{remark}

\begin{remark}
We define the general family of decorated bottleneck $p$-distances in anticipation that they will be be useful in data analysis tasks down the line. In this paper, we develop the theory of the $p=\infty$ metric $d_B$ and will almost exclusively refer to the decorated bottleneck distance in what follows.
\end{remark}

The ($p = \infty$) decorated bottleneck distance can be formulated more simply than the general $p$ version. An $(\epsilon,\delta)$-matching of barcode decorated merge trees where $\epsilon = \delta$ will be referred to as an \define{$\epsilon$-matching}. The proof of the following proposition is elementary but technical. We relegate it to Appendix \ref{sec:technical_proof_decorated_bottleneck_reformulation}.

\begin{proposition}\label{prop:decorated_bottleneck_reformulation}
Let $\cB_F$ and $\cB_G$ be barcode decorated merge trees which are determined by their leaves. The decorated bottleneck distance can be expressed as 
\[
d_B(\cB_F,\cB_G) = \inf \{\epsilon \geq 0 \mid \mbox{$\exists$ $\e$-matching of $\mathcal{B}_F$ and $\mathcal{B}_G$}\}.
\]
\end{proposition}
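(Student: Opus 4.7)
The plan is to show both inequalities. The easy direction is immediate: every $\epsilon$-matching is tautologically an $(\epsilon,\epsilon)$-matching with $\|(\epsilon,\epsilon)\|_\infty = \epsilon$, giving
\[
d_B(\cB_F,\cB_G) \leq \inf\{\epsilon \geq 0 \mid \exists\ \epsilon\text{-matching of }\cB_F\text{ and }\cB_G\}.
\]
For the reverse inequality, it suffices to show that any $(\epsilon,\delta)$-matching can be converted into a $(\mu,\mu)$-matching, where $\mu := \max(\epsilon,\delta)$.

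If $\delta \leq \epsilon$, the conversion is trivial: each $\delta$-matching of individual barcodes is also an $\epsilon$-matching (the cost is unchanged but the budget is larger), so the original interleaving paired with these same barcode matchings already forms an $(\epsilon,\epsilon)$-matching. The substantive case is $\epsilon < \delta$. Here the first step is to thicken the $\epsilon$-interleaving $(\phi,\psi)$ to a $\delta$-interleaving $(\phi',\psi')$ of the underlying persistent sets by post-composing with internal structure maps: set $\phi'_s := (\pi_0\circ G)(s+\epsilon \leq s+\delta)\circ \phi_s$ and symmetrically for $\psi'$. A direct naturality calculation then yields $\psi'{}^\delta \circ \phi' = \eta_F^{2\delta}$ and $\phi'{}^\delta \circ \psi' = \eta_G^{2\delta}$. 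For each $p = (i,s) \in \cM_F$, the point $\phi'(p)$ lies above $\phi(p)$ at height $s+\delta$, so by the determined-by-restriction hypothesis,
\[
\cB_G(\phi'(p)) = \cB_G(\phi(p)) \cap [s+\delta,\infty).
\]
Convert the given $\delta$-matching $\xi_p$ between $\cB_F(p)$ and $\cB_G(\phi(p))$ into a matching $\xi'_p$ between $\cB_F(p)$ and $\cB_G(\phi'(p))$ as follows: for each matched pair $(I,J) = ([b_I,d_I),[b_J,d_J)) \in \xi_p$, leave $I$ unmatched if $d_J \leq s+\delta$, and otherwise match $I$ to $J' := [\max(b_J,s+\delta),d_J)$. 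Perform the analogous construction on the $\psi'$-side.

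The main technical obstacle is verifying that every cost in $\xi'_p$ stays $\leq \delta$. The key inputs are: every interval in $\cB_F(p)$ has left endpoint $\geq s$; the bound $\|I-J\|_\infty \leq \delta$ forces $|b_I - b_J| \leq \delta$ and $|d_I - d_J| \leq \delta$; and any interval with left endpoint $\geq s$ and right endpoint $\leq s + 2\delta$ has half-length $\leq \delta$. With these bounds the verification reduces to a short case split on whether a matched partner $J$ is truncated, unchanged, or discarded under the restriction; the same reasoning handles intervals in $\cB_G(\phi'(p))$ whose preimages in $\cB_G(\phi(p))$ were unmatched by $\xi_p$. The hypothesis that $\cB_F$ and $\cB_G$ are determined by their leaves is what makes this pointwise construction globally coherent, since an analogous argument applied at each leaf automatically propagates upward.
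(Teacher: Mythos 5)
Your proposal is correct and follows essentially the same strategy as the paper: handle the case $\delta\le\epsilon$ trivially, and for $\epsilon<\delta$ thicken the $\epsilon$-interleaving to a $\delta$-interleaving by post-composing with structure maps, then use the determined-by-restriction property to identify the barcode at the thickened image point with a truncation of the original and verify the truncated matching still has cost $\le\delta$. The paper packages the truncation verification as a standalone lemma (Lemma~\ref{lem:truncation}) whereas you inline it, but the case analysis you sketch (truncated, unchanged, discarded, plus the estimate $\|I\|_\Delta\le\delta$ from $b_I\ge s$ and $d_I<s+2\delta$) is exactly the content of that lemma's proof.
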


The last result of this subsection says that ``determined by leaves'' assumption allows us to check one of the matching conditions only at leaves. Its proof uses technical tools from the proof of Proposition \ref{prop:decorated_bottleneck_reformulation}, so we delay it to Appendix \ref{sec:technical_proof_decorated_bottleneck_reformulation}.

\begin{proposition}\label{prop::determinedleaves}

Let $\mathcal{B}_F:\cM_F \to \BCS$ and $\mathcal{B}_G:\cM_G \to \BCS$ be two leaf-decorated merge trees.
Let $\phi$ and $\psi$ be an $\e$-interleaving of the underlying generalized merge trees $\cM_F$ and $\cM_G$ where for each leaf $v\in \cM_F$ and each leaf $w\in \cM_G$ we have a $\delta$-matching between $\mathcal{B}_F(v)$ and $\mathcal{B}_G(\phi(v))$ and a $\delta$-matching between $\mathcal{B}_G(w)$ and $\mathcal{B}_F(\psi(w))$. Then there is an $(\e,\delta)$-matching between the entire barcode decorated merge trees $\mathcal{B}_F$ and $\mathcal{B}_G$.
\end{proposition}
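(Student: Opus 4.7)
The plan is to extend the hypothesized $\delta$-matchings at the leaves of $\cM_F$ and $\cM_G$ to $\delta$-matchings at every point of both merge trees; combined with the already-given $\e$-interleaving $(\phi,\psi)$, this will produce the full $(\e,\delta)$-matching required by Definition~\ref{defn:matchings_DMTs}. The two sides are symmetric, so I will describe the construction on the $\cM_F$ side only; the construction for $\cM_G$ via $\psi$ is identical.

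The crucial property is that both $\cB_F$ and $\cB_G$ are leaf-decorated, hence determined by restriction. For any non-leaf $p\in\cM_F$ and any leaf $v\cle p$, this means $\cB_F(p)=\cB_F(v)\cap[\pi_F(p),\infty)$. Because $\phi$ is an $\e$-interleaving of generalized merge trees, $\phi(v)\cle\phi(p)$ with $\pi_G(\phi(p))=\pi_F(p)+\e$, so analogously $\cB_G(\phi(p))=\cB_G(\phi(v))\cap[\pi_F(p)+\e,\infty)$. The first step is to pick, for every non-leaf $p$, some leaf $v=v(p)\cle p$. The second step is to construct the desired matching $\xi_p$ between $\cB_F(p)$ and $\cB_G(\phi(p))$ as the \emph{restriction} of the given leaf matching $\xi_v$: for every matched pair $(I,I')\in\xi_v$ for which both $I$ survives truncation to $[\pi_F(p),\infty)$ and $I'$ survives truncation to $[\pi_F(p)+\e,\infty)$, pair their truncations in $\xi_p$; every other interval on either side is left unmatched in $\xi_p$.

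The third step is to verify that $\xi_p$ is indeed a $\delta$-matching. For a matched pair $(J,J')\in\xi_p$ coming from $(I,I')=([a,b],[a',b'])\in\xi_v$, the death gap $|b-b'|\leq\delta$ is untouched by restriction, and the possibly-truncated birth gap $|\max(a,\pi_F(p))-\max(a',\pi_F(p)+\e)|$ reduces, via a short case split on which of $a-\pi_F(p)$ and $a'-\pi_F(p)-\e$ is nonnegative, to either $|a-a'|\leq\delta$ or to bounds that follow from combining $|a-a'|\leq\delta$ with the $\e$-shift. For an interval that was already unmatched in $\xi_v$, its half-length bound transfers directly to $\xi_p$ because restriction only shortens intervals.

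The main obstacle I expect is the \emph{asymmetric boundary case}: an interval $I\in\cB_F(v)$ survives truncation while its $\xi_v$-partner $I'\in\cB_G(\phi(v))$ is killed because $b'<\pi_F(p)+\e$ (or vice versa). The restricted survivor has no partner in $\xi_p$, and one must still bound its half-length by $\delta$. A naive estimate based only on $|b-b'|\leq\delta$ and the $\e$-shift yields $(b-\pi_F(p))/2<(\e+\delta)/2$, which is the sharp $\delta$ bound only in a tight regime; extracting the bound in general requires delicate bookkeeping, namely either refining the choice of $v(p)$ so that surviving intervals retain a partner, or cross-pairing the asymmetric survivors on the two sides against each other. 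This is precisely the technical ingredient already developed in the proof of Proposition~\ref{prop:decorated_bottleneck_reformulation}, which the statement promises to reuse, and I would import it directly rather than reconstruct it. Once the $\cM_F$ side is settled, the symmetric construction using $\psi$ and the leaf matchings on $\cM_G$ supplies the other half of the $(\e,\delta)$-matching and completes the proof.
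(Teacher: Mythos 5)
Your construction is the paper's own: for each internal point $p$ you pick a leaf $v\cle p$, use the determined-by-restriction property to write $\cB_F(p)=\mathrm{trunc}_{\pi_F(p)}(\cB_F(v))$ and $\cB_G(\phi(p))=\mathrm{trunc}_{\pi_F(p)+\e}(\cB_G(\phi(v)))$, and then push the given leaf-level $\delta$-matching through the truncations. The paper's proof compresses this to the assertion that the induced matching of truncated barcodes is ``straightforward to check,'' so your explicit case split and your identification of the asymmetric-survivor case are exactly the content that phrase elides; deferring the delicate estimate to the machinery of Lemma~\ref{lem:truncation} is the intended move.

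What your write-up (and the paper's) does not surface, however, is that the bounds come out $\leq\delta$ \emph{only if} $\e\leq\delta$, and that this is not a side technicality but a missing hypothesis. In the case where both births are pushed up by truncation ($b<\pi_F(p)$ and $b'<\pi_F(p)+\e$), the resulting birth gap is exactly $\e$; and in your asymmetric-survivor case the orphaned half-length is bounded by $(\e+\delta)/2$. Neither is $\leq\delta$ once $\e>\delta$, and the proposition genuinely fails there: take $\cM_F=\cM_G=[0,\infty)$, decorate both by restriction with the single bar $\{[9.5,30)\}$, and set $\e=10$, $\delta=1$. The (unique) $10$-interleaving shifts by $10$, the leaf barcodes $\{[9.5,30)\}$ and $\{[10,30)\}$ are $1$-matched in both directions, yet at $p=15$ one must match $\{[15,30)\}$ against $\{[25,30)\}$ and no $1$-matching exists. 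This shows that neither ``refining the choice of $v(p)$'' nor ``cross-pairing the asymmetric survivors'' can rescue the statement in general --- the fix is to add $\e\leq\delta$ to the hypotheses, the same standing assumption Lemma~\ref{lem:truncation} already carries, which costs nothing downstream since Proposition~\ref{prop:decorated_bottleneck_reformulation} ultimately reduces everything to $\e$-matchings with $\e=\delta$. Your write-up would be airtight if you stated this hypothesis explicitly and then ran the case analysis under it.
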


\subsection{Hierarchy of Distances}
In this section we prove the main two results of this paper.

\begin{theorem}[The Hierarchy of Stability Results for DMTs]\label{thm:hierarchy-stability}
Let $f:X \rightarrow \R$ and $g:Y \rightarrow \R$ be continuous functions whose sublevel sets are locally connected.
Let $F$ and $G$ be the sublevel set filtrations, viewed as persistent spaces.
The categorical decorated merge trees for homological degree $n$ are denoted $\tilde{F}_n$ and $\tilde{G}_n$ and the associated barcode decorated merge trees are $\mathcal{BF}_n$ and $\mathcal{BG}_n$.
Our various distances satisfy
\[
\theta_I(\cM_F,\cM_G) \leq d_B(\mathcal{BF}_n,\mathcal{BG}_n) \leq d_I(\tilde{F}_n,\tilde{G}_n) \leq \delta_I(X_f,Y_g).
\]
Moreover, if $X = Y$ then we have that
$$
\delta_I(X_f,Y_g)\leq ||f-g||_\infty.
$$
\end{theorem}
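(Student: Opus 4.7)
The plan is to chain the four inequalities from right to left, each time showing that the structure witnessing a distance at the more refined invariant induces the structure needed at the coarser one, and then dispatch the $L^\infty$ bound by the standard sublevel-set argument. Throughout I work with the ``$\e$-matching'' reformulation of \cref{prop:decorated_bottleneck_reformulation} and use \cref{prop::determinedleaves} to cut the barcode-matching data down to leaves.

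The leftmost inequality $\theta_I(\cM_F,\cM_G)\le d_B(\mathcal{BF}_n,\mathcal{BG}_n)$ is essentially tautological: an $\e$-matching of barcode DMTs already contains, as its first datum in \cref{defn:matchings_DMTs}, an $\e$-interleaving of the persistent sets $\pi_0\circ F$ and $\pi_0\circ G$, which is by definition an $\e$-interleaving of the generalized merge trees.

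For the middle inequality $d_B(\mathcal{BF}_n,\mathcal{BG}_n)\le d_I(\tilde{F}_n,\tilde{G}_n)$, suppose $\varphi:\tilde{F}_n\Rightarrow\tilde{G}_n^{\e}$ and $\psi:\tilde{G}_n\Rightarrow\tilde{F}_n^{\e}$ form an $\e$-interleaving in $\Fun(\R,\pVect)$. Unpacking \cref{defn:cat-of-parameterized-objects}, each $\varphi(s)$ splits as a set map $m_s:\pi_0(F(s))\to\pi_0(G(s+\e))$ together with linear maps $H_n(F(s)_i)\to H_n(G(s+\e)_{m_s(i)})$ for every $i$, and analogously for $\psi$. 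Whiskering by $\dom:\pVect\to\Set$ yields an $\e$-interleaving of the persistent sets, hence $\e$-compatible maps $\phi:\cM_F\to\cM_G$ and $\psi:\cM_G\to\cM_F$. For each leaf $v\in\cM_F$ the linear-map components of $\varphi$ and $\psi$, tracked along the chain of components above $v$, assemble into an $\e$-interleaving of ordinary $\Vect$-valued persistence modules $\mathcal{F}_n|_{U_v}$ and $\mathcal{G}_n|_{U_{\phi(v)}}$; Lesnick's isometry theorem then produces an $\e$-bottleneck matching of $\mathcal{BF}_n(v)$ and $\mathcal{BG}_n(\phi(v))$. The symmetric statement holds at leaves of $\cM_G$, and \cref{prop::determinedleaves} globalizes these leaf-level matchings into the required $\e$-matching of $\mathcal{BF}_n$ and $\mathcal{BG}_n$.

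The third inequality $d_I(\tilde{F}_n,\tilde{G}_n)\le \delta_I(X_f,Y_g)$ follows by functoriality: an $\e$-interleaving $\varphi:F\Rightarrow G^{\e}$, $\psi:G\Rightarrow F^{\e}$ in $\Fun(\R,\Top^{\lc})$ can be whiskered by $H_n\circ\pbc$, which as a composition of functors preserves the $2\e$-compatibility identities and so delivers an $\e$-interleaving of $\tilde{F}_n$ and $\tilde{G}_n$ in $\Fun(\R,\pVect)$. For the final bound, when $X=Y$, set $\e=\|f-g\|_\infty$; then $g\le f+\e$ and $f\le g+\e$ pointwise force $\{f\le s\}\subseteq\{g\le s+\e\}$ and symmetrically for every $s$, so the resulting subspace inclusions define natural transformations $F\Rightarrow G^{\e}$ and $G\Rightarrow F^{\e}$, and the interleaving identities hold trivially because all maps in sight are inclusions inside $X$. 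The main obstacle is the middle inequality: one must verify that the two layers of a $\pVect$-interleaving (set-level on components and linear-level on each component) really do decouple correctly when restricted to a principal upset $U_v\subseteq\cM_F$, so that the linear-map components constitute a genuine $\Vect$-valued interleaving of persistence modules before Lesnick's theorem is invoked.
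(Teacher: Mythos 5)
The first two inequalities follow essentially the paper's route; your reduction to leaves via \cref{prop::determinedleaves} is a slight variation but equally valid, and invoking the isometry theorem on the restricted $\R$-modules is the same idea as the paper's use of algebraic stability.

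The third inequality, however, contains a genuine gap. You write that the data of $\delta_I(X_f,Y_g)<\e$ yields ``an $\e$-interleaving $\varphi:F\Rightarrow G^{\e}$, $\psi:G\Rightarrow F^{\e}$ in $\Fun(\R,\Top^{\lc})$'' and that whiskering by $H_n\circ\pbc$ ``preserves the $2\e$-compatibility identities.'' That premise is false. Unwinding \cref{D:theoretical}, an $\e$-interleaving of $\R$-spaces provides continuous maps $\Phi:X\to Y$ and $\Psi:Y\to X$ which restrict to natural transformations $\phi:F\Rightarrow G^{\e}$ and $\psi:G\Rightarrow F^{\e}$, but the compositions $\psi^{\e}\circ\phi$ and $\phi^{\e}\circ\psi$ are only required to be \emph{homotopic} (via $H_X$ and $H_Y$, constrained by conditions (3)--(4)) to the internal shift maps $\eta_F^{2\e}$ and $\eta_G^{2\e}$. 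They are typically not equal to them as maps. So there is no $\e$-interleaving to whisker in $\Fun(\R,\Top^{\lc})$. If one existed, you would have shown $d_I(F,G)\leq\delta_I(X_f,Y_g)$ at the level of persistent spaces, which is certainly too strong. The missing ingredient is the observation that homotopic maps induce identical maps on homology, which is what promotes the ``interleaving up to homotopy'' of parameterized spaces into a genuine $\e$-interleaving once $H_n$ has been applied. That step cannot be absorbed into generic functoriality, since $\pbc$ and $H_n\circ\pbc$ do not take homotopic maps to equal morphisms in general; the argument needs to exploit that (a) continuous images of connected sets are connected (so $\phi,\psi$ lift to natural transformations $\tilde\phi:\tilde F\Rightarrow\tilde G^{\e}$, $\tilde\psi:\tilde G\Rightarrow\tilde F^{\e}$ in $\Fun(\R,\pTop^{\con})$), and (b) the homotopy conditions force the compositions to equal the required shift morphisms after $H_n$, thereby giving an $\e$-interleaving of $\tilde F_n$ and $\tilde G_n$ in $\Fun(\R,\pVect)$.

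Your final bound $\delta_I(X_f,X_g)\leq\|f-g\|_\infty$ when $X=Y$ is argued directly rather than by citing the persistent homotopy type distance literature, which is fine in spirit. You should, however, be explicit that the required data for $\delta_I$ is a pair of global maps $\Phi,\Psi:X\to X$ together with homotopies, and that taking $\Phi=\Psi=\id_X$ with constant homotopies $H_X,H_Y$ satisfies conditions (1)--(4) of \cref{D:theoretical}; phrasing it as ``subspace inclusions define natural transformations'' works for the $d_I$ of persistent spaces, but the object being bounded is $\delta_I$, which takes $\R$-spaces as input.
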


The distance $\delta_I$ is a special case of the \emph{persistent homotopy type distance} introduced in \cite{frosini2019persistent}. We recall the definition below.

\begin{definition}\label{D:theoretical}
An \define{$\R$-space} is a topological space $X$ endowed with a continuous function $f:X \to \R$. An \define{$\e$-interleaving of $\R$-spaces} $f:X \to \R$ and $g:Y \to \R$ is a pair of continuous maps $\Phi:X\to Y$ and $\Psi: Y \to X$ along with homotopies $H_X : X\times [0,1] \to X$ and $H_Y: Y \times [0,1] \to Y$ connecting the identity maps $\id_X$ and $\id_Y$ with $\Psi\circ \Phi$ and $\Phi\circ\Psi$, respectively.
We require further that the following four properties hold for $\Phi$, $\Psi$, $H_X$ and $H_Y$:
\begin{enumerate}
    \item $\Phi (X_{\leq s}) \subseteq Y_{\leq s+\e}$ for all $s\in\R$
    \item $\Psi(Y_{\leq s}) \subseteq X_{\leq s + \e}$ for all $s\in\R$
    \item $f\circ H_X(x,t) \leq f(x) + 2\e$  for all $x\in X$ and $t\in [0,1]$
    \item $g\circ H_Y(y,t) \leq g(y) + 2\e$ for all $y\in Y$ and $t\in [0,1]$
\end{enumerate}
The \define{persistent homotopy type distance} between $\R$-spaces $X_f := f:X \to \R$ and $Y_g := g:Y \to \R$
is defined as
\[
\delta_I (X_f, Y_g) := \inf \,\{\epsilon \mid X_f \text{ and } Y_g 
\text{ are $\epsilon$-interleaved}\}.
\]
If no interleaving exists, we set $\delta_I (X_f, Y_g) = \infty$.
\end{definition}

This metric was used in \cite{hang2019topological} to develop persistent homology of $\R$-spaces with different, but homotopic domains. 
A similar metric on $\R$-spaces, called the \define{homotopy interleaving distance}, was defined in \cite{bl17}, but the exact connection between homotopy type distance and homotopy interleaving distance remains to be studied. 

\begin{proof}[Proof of Theorem \ref{thm:hierarchy-stability}]
The leftmost inequality $\theta_I(\cM_F,\cM_G) \leq d_B(\mathcal{BF}_n,\mathcal{BG}_n)$ follows from the definition of the decorated bottleneck distance, since the data of an interleaving of merge trees is part of the definition.
Moreover it is obvious that $d_I(\tilde{F}_n,\tilde{G}_n)\geq \theta_I(\cM_F,\cM_G)$ since one can always apply the functor $\dom$ to any $\epsilon$-interleaving $\tilde{\phi}:\tilde{F}_n\Rightarrow \tilde{G}_n^{\epsilon}$ and $\tilde{\psi}:\tilde{G}_n\Rightarrow \tilde{F}_n^{\epsilon}$ of categorical decorated merge trees to obtain an $\epsilon$-interleaving $\phi$ and $\psi$ of the underlying persistent sets or generalized merge trees; this is the Lipschitz stability result of~\cite[Thm. 2.3.6]{bubenik2015metrics} applied to the functor $\dom$.
Moreover, if we take a point $p\in\cM_F$ then $\tilde{\phi}:\tilde{F}_n\Rightarrow \tilde{G}_n^{\epsilon}$ specifies, by restriction, a morphism between the restriction of $\tilde{F}_n$ to the upset at $p$ and the restriction of $\tilde{G}_n^{\epsilon}$ to the upset at $\phi(p)$.
Similarly, since $\psi(\phi(p))$ must equal the $2\epsilon$ translate of $p$ in $\cM_F$, we can conclude that the module $\tilde{F}_n|_{U_p}$ and $\phi^*\tilde{G}^{\epsilon}_n|_{U_p}$ are $\epsilon$ interleaved, as $\R$-modules.
By the Algebraic Stability Theorem~\cite{bauer2020persistence,chazal2009proximity}, which states that an $\epsilon$-interleaving of $\R$-modules induces an $\epsilon$-matching of barcodes, we have an $\epsilon$-matching between the barcodes $\mathcal{BF}_n(p)$ and $\mathcal{BG}_n(\phi(p))$ for all $p$.
A completely symmetric argument works where we consider $q\in \cM_G$ and the natural transformation $\tilde{\psi}:\tilde{G}_n\Rightarrow \tilde{F}_n^{\epsilon}$.
This proves that
\[
    d_B(\mathcal{BF}_n,\mathcal{BG}_n) \leq d_I(\tilde{F}_n,\tilde{G}_n).
\]

To prove the rightmost inequality, first observe that if $\delta_I(X_f,Y_f) = \infty$ then we are done, so assume not. Given $\epsilon > \delta_I (X_f, Y_g)$, there exists an $\epsilon$-interleaving of the $\R$-spaces
$X_f$ and $Y_g$.
In particular, for each $t\in \R$ we have that the map $\Phi$ restricts to a continuous map between the sublevel sets $\Phi: F(t):= X_{\leq t} \to Y_{\leq t+\e}=: G(t+\e)$.
Since $\Phi$ is continuous and defined globally on $X$, it specifies a natural transformation of the persistent spaces $\phi: F \Rightarrow G^{\e}$.
Moreover, since the continuous image of a (path) connected set is connected, the natural transformation $\phi$ defines a natural transformation between the associated persistent spaces that are parameterized by their (path) components $\tilde{\phi}: \tilde{F} \Rightarrow \tilde{G}^{\e}$.
Interchanging the roles of $X$ and $Y$ in the above discussion implies that $\Psi:Y \to X$ defines a natural transformation $\tilde{\psi}:\tilde{G} \Rightarrow \tilde{F}^{\e}$.

It should be noted that $\tilde{\phi}$ and $\tilde{\psi}$ \emph{do not} define an interleaving of the functors $\tilde{F}$ and $\tilde{G}$ because that would require that for each $t\in \R$ the composition $\psi(t+\e)\circ \phi(t)$ equals the inclusion map $X_{\leq t}\subseteq X_{\leq t+2\e}$.
This is not true, but condition (3) of Definition~\ref{D:theoretical} does require that $\Psi\circ \Phi$ restrict to a map that is homotopic to the inclusion map $X_{\leq t}\subseteq X_{\leq t+2\e}$ for all $t$.
Since homotopic maps induce the same map on homology, we can conclude that the natural transformations $\tilde{\phi}:\tilde{F}\Rightarrow \tilde{G}^{\e}$ and $\tilde{\psi}:\tilde{G} \Rightarrow \tilde{F}^{\e}$ define an $\e$-interleaving of the persistent component homology modules $\tilde{F}_n$ and $\tilde{G}_n$.

In the case that $X = Y$, the desired inequality is \cite[Proposition 2.11]{frosini2019persistent}. This completes the proof.
\end{proof}

Although the above result establishes that categorical DMTs are more sensitive than undecorated merge trees, the relationship to higher dimensional persistent homology is not described above. 
The next result establishes that categorical DMTs are, in fact, more sensitive than ordinary persistent homology.

\begin{theorem}\label{thm:cat-DMT-more-sensitive-than-PH}
For locally connected persistent spaces $F,G:(\R,\leq) \to \Top^{\lc}$, the categorical decorated merge tree in homological degree $n$ is more sensitive than the interleaving distance of the persistent homology modules in degree $n$, i.e
\[
    d_I(H_n\circ F,H_n\circ G) \leq d_I(\tilde{F}_n,\tilde{G}_n).
\]
\end{theorem}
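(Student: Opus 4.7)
The plan is to exhibit $H_n \circ F$ as a coproduct-style aggregation of the categorical DMT $\tilde{F}_n$, and then invoke the soft Lipschitz stability of interleaving distances under post-composition with a functor \cite[Thm. 2.3.6]{bubenik2015metrics}. This lets me essentially transport any $\e$-interleaving of categorical DMTs to an $\e$-interleaving of ordinary persistent homology modules.

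First, I would extend the coproduct construction used in Lemma~\ref{lem:persistent-space-factors} from the topological setting to the vector space setting. Since $\Vect$ has coproducts (direct sums), the construction of Definition~\ref{defn:cat-of-parameterized-objects} furnishes a functor $\cop: \pVect \to \Vect$ sending an $S$-parameterized vector space $I: S \to \Vect$ to $\bigoplus_{s \in S} I(s)$, and carrying a morphism $(m,\alpha): I \to J$ to the linear map that places each $\alpha_s : I(s) \to J(m(s))$ into the $m(s)$-summand of $\bigoplus_{t \in T} J(t)$.

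Next, I would establish the identification $\cop \circ \tilde{F}_n \cong H_n \circ F$ as functors $(\R, \leq) \to \Vect$. This uses two ingredients: (i) Lemma~\ref{lem:persistent-space-factors} already gives $\cop \circ \tilde{F} \cong F$ as functors to $\Top^{\lc}$; and (ii) $H_n$ commutes with disjoint unions, expressing a natural isomorphism $H_n \circ \cop \cong \cop \circ H_n$ between functors $\pTop^{\con} \to \Vect$, where on the right $\cop$ is the $\pVect$-version and on the left it is the $\pTop^{\con}$-version. Chaining these,
$$
\cop \circ \tilde{F}_n \;=\; \cop \circ H_n \circ \tilde{F} \;\cong\; H_n \circ \cop \circ \tilde{F} \;\cong\; H_n \circ F,
$$
and likewise $\cop \circ \tilde{G}_n \cong H_n \circ G$.

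Finally, applying Lipschitz stability of interleaving distance under functorial post-composition to $\cop: \pVect \to \Vect$ yields
$$
d_I(\cop \circ \tilde{F}_n, \cop \circ \tilde{G}_n) \;\leq\; d_I(\tilde{F}_n, \tilde{G}_n).
$$
Since naturally isomorphic functors are $0$-interleaved, the identifications of the previous step let me replace the left-hand side with $d_I(H_n \circ F, H_n \circ G)$, completing the proof. The only non-formal step, and the main (minor) obstacle, is verifying that the decomposition $H_n(X) \cong \bigoplus_{i \in \pi_0(X)} H_n(X_i)$ is natural with respect to morphisms in $\pTop^{\con}$; this is straightforward because singular homology is additive over disjoint unions at the chain level, and every morphism $(m,\alpha)$ in $\pTop^{\con}$ sends each connected component in the source into a single connected component in the target, so the induced map on homology is precisely the direct sum of the component-wise maps.
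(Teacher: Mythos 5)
Your proposal is correct and follows essentially the same route as the paper's proof: both construct the coproduct functor $\cop:\pVect\to\Vect$, identify $\cop\circ\tilde{F}_n\cong H_n\circ F$ (using local connectedness to decompose $H_n$ over components), and then invoke the $1$-Lipschitz stability of interleaving distance under post-composition from~\cite[Thm.~2.3.6]{bubenik2015metrics}. Your version is slightly more explicit about factoring the identification through $H_n\circ\cop\cong\cop\circ H_n$ and about checking naturality, but these are amplifications of, not departures from, the paper's argument.
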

\begin{proof}
 
We have
\[
    \cop\circ \tilde{F}_n(s):=\bigoplus_{i\in \pi^{-1}(s)} H_n(F(s)_i)\cong H_n(\bigsqcup_{i\in \pi^{-1}(s)} F(s)_i) \cong  H_n\circ F(s),
\]
and similarly for $G$.

Note that the last isomorphism on the right uses the assumption that each $F(s)$ is locally connected.
Since naturally isomorphic modules have the same interleaving distance, the result now follows from the Lipschitz stability result of~\cite[Thm. 2.3.6]{bubenik2015metrics} applied to the functor $\cop$, i.e. applying $\cop$ is a 1-Lipschitz map from $\Fun(\R,\pVect)$ to $\Fun(\R,\Vect)$.
\end{proof}

\begin{remark}[No Contradiction to Universality]~\label{rmk:universality}
By reconsidering the example from Figure~\ref{fig:two-subsets}, we know that the above inequality is not an equality, in general. 
Moreover, when taken together Theorem~\ref{thm:cat-DMT-more-sensitive-than-PH} and Theorem~\ref{thm:hierarchy-stability} may appear to contradict Lesnick's Universality result~\cite[Thm. 5.5]{lesnick2015theory}, which asserts that the interleaving distance on persistent homology modules is the most sensitive distance that is bounded above by the $L^{\infty}$ distance.
However, there is no contradiction because universality theorems of the type stated and conjectured in~\cite{lesnick2015theory} only apply to functors from $\R$ (or $\R^d$) to $\Vect$ and not $\pVect$.
\end{remark}

\begin{remark}[No One-Way Relationship of Bottleneck Distances]\label{rmk:no-bottleneck}
In the case where locally connected persistent spaces $F$ and $G$ give rise to pointwise finite dimensional persistent homology modules, we know that $d_I(H_n\circ F,H_n\circ G)$ coincides with the bottleneck distance by virtue of the isometry theorem~\cite[Thm. 3.4]{lesnick2015theory}.
Unfortunately, the decorated bottleneck distance $d_B(\mathcal{BF}_n,\mathcal{BG}_n)$ can be greater than or less than the bottleneck distance between $H_n\circ F$ and $H_n\circ G$.
If one considers the motivating example in Figure~\ref{fig:two-subsets}, then the decorated bottleneck distance there is obviously greater than the bottleneck distance between $H_1\circ F_X$ and $H_1\circ F_Y$. For this example, the bottleneck distance is 0, but the decorated bottleneck distance is $R/2$, corresponding to half the radius of the circles in Figure~\ref{fig:two-subsets}.
On the other hand, Figure~\ref{fig:non-injective-barcode-decorations} gives an example of two non-isomorphic tree modules (concrete DMTs) with identical barcode decorations, so the decorated bottleneck distance for $\calF$ and $\calG$ in those examples is 0. 
However, the underlying persistent homology modules, which arise via the pushforward to $\R$ are separated by the length of the edge from the merge point to $w$ in the bottleneck distance.
\end{remark}

\section{Representations of Tree Posets and Lift Decorations}\label{sec:representations}

This section treats questions about the  decomposability of decorated merge trees. We will see that the situation is much more subtle than in the classical persistence setting, due to the fact that merge tree posets are not totally ordered.

\subsection{Indecomposables with totally ordered support}
A crucial feature of pointwise finite dimensional (PFD) persistence modules is that they decompose as a direct sum of interval modules; these decompositions are used ubiquitously in visualization and analysis tasks in TDA. 
Although a PFD concrete decorated merge tree $\mathcal{F}:\cM_F \to \vect$ also admits a (Remak) decomposition into a direct sum of indecomposables $\mathcal{F} \approx \bigoplus_i \mathcal{F}_i$, where 
the direct sum is taken pointwise \cite[Theorem 1.1]{botnan2020decomposition}, the indecomposables need not have any special structure. 
However, we will consider a construction where the DMT does decompose into modules that are supported on totally ordered subsets of the merge tree, which will give us a structure theorem analogous to the usual one in persistence.

\begin{definition}\label{def:real_interval_decomposable}
A concrete decorated merge tree $\mathcal{F}:\cM_F \to \Vect_{\Bbbk}$ is \define{real interval decomposable} if it can be expressed as a direct sum $\mathcal{F} = \bigoplus_i \mathcal{F}_i$, where each $\mathcal{F}_i:\cM_F \to \Vect_{\Bbbk}$ is indecomposable and has totally ordered support.
\end{definition}

The next example shows that an arbitrary DMT does not have to be real interval decomposable.

\begin{figure}
\centering
  \includegraphics[width=0.8\linewidth]{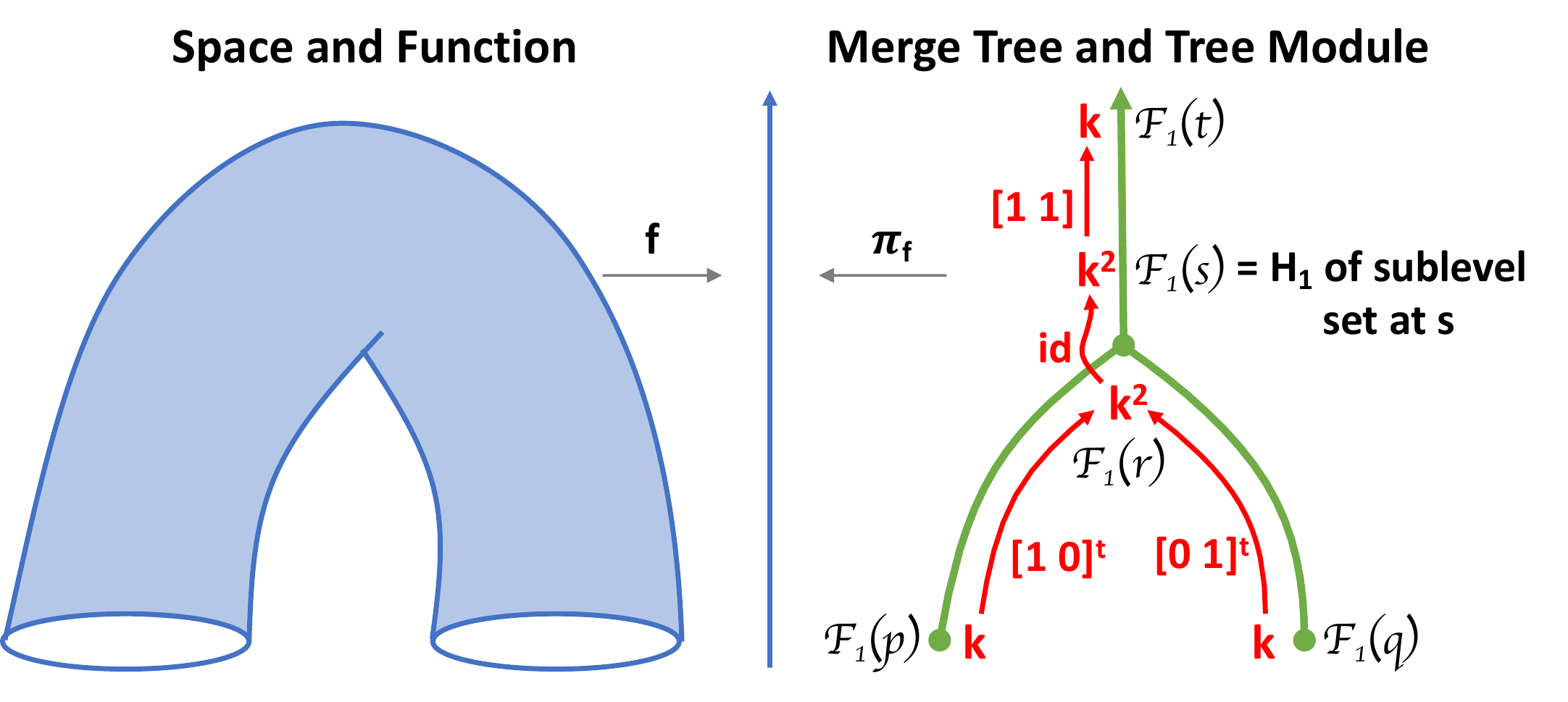}
  \caption{Consider a cylinder with height function induced by the implied embedding. The concrete decorated merge tree in degree-1 is an indecomposable tree module that is two dimensional at the merge point and for values up to, but not including, the maximum. This is equivalent to an indecomposable tree module, which is equivalent to one of the 12 indecomposable representations of the Dynkin Diagram $D_4$. This shows that not every tree module is real interval decomposable.}
  \label{fig:indecomposable-macaroni}
\end{figure}

\begin{example}\label{ex:indecomposable-macaroni}
Consider the space and map shown in Figure~\ref{fig:indecomposable-macaroni}. The concrete decorated merge tree $\calF_1:(\cM_F,\cle)\to\vect$ reduces to the study of the four vector spaces and the maps between them, indicated in red.
It is a fact that this tree module is equivalent to one of the twelve indecomposable representations of the Dynkin Diagram $D_4$.
This gives a natural, Morse-theoretic example of a function whose associated tree module is not  real-interval decomposable.
\end{example}

The aim of this subsection is to establish a sufficient condition for a decorated merge tree to be real interval decomposable.

\begin{definition}\label{def:untwisted}
A collection of vector subspaces $V_1,\ldots,V_m$ of a fixed vector space $V$ is \define{independent} if for all $i$
$V_i\cap \big(\sum_{j\neq i}V_j\big)= 0.$
Under this hypothesis we have that
\[
    V_1+\cdots+ V_m = V_1\oplus \cdots \oplus V_m.
\]
Let $(\cM_F,\cle)$ be a generalized merge tree and
let $\mathcal{F}:(\cM_F,\cle) \to \Vect$ be a tree module.
We say that $\mathcal{F}$ is \define{untwisted} if whenever we have a set of incomparable elements $p_1,p_2,\cdots,p_n\in \cM_F$ and any upper bound $p\in \cM_F$ of these, 
the collection of vector subspaces 
\[
    \Ima\mathcal{F}(p_1\cle p), \Ima\mathcal{F}(p_2\cle p) \cdots, \Ima\mathcal{F}(p_n\cle p)
\]
is independent.
\end{definition}

\begin{remark}
It should be noted that this notion of untwisted is completely unrelated to the notion used in~\cite{kim2018generalized}, which is used to describe Reeb graphs which admit sections when restricted to certain intervals.
\end{remark}

To state our sufficient condition for real interval decomposability, we impose a mild geometric constraint on our generalized merge trees (Definition~\ref{defn:generalized-MT}).

\begin{definition}[cf.~Defn.~2.2 of~\cite{patel2018generalized}]\label{defn:constructible-persistent-set}
A persistent set $S:(\R,\leq)\to\set$ is \define{constructible} if there exists a collection of times $\tau=\{t_0 < t_1 < \cdots < t_n\}$ such that
\begin{itemize}
    \item $S(t)=\varnothing$ for $t<t_0$,
    \item $S(t)=\{\star\}$ for $t>t_n$, and
    \item $S(t\leq s)$ is a bijection for every pair $t\leq s \subset [t_i,t_{i+1})$.
\end{itemize}
Assuming these conditions, the associated display poset $(\mathcal{S},\cle)$ is said to be \define{tame}.
\end{definition}

It is known that a constructible persistent set has an associated display poset that can be topologized as a finite cell complex, making it into a bona fide tree~\cite{curry2018fiber,stefanou2018dynamics}. There are a few other consequences of this tameness condition that are worth noting.

\begin{proposition}\label{prop:constructible-PFD}
Suppose $(\cM_F,\cle)$ is a \emph{tame} generalized merge tree, i.e. the defining persistent set $\pi_0\circ F$ is constructible.
The following consequences of this assumption are obvious:
\begin{itemize}
    \item Any maximal chain $C\subseteq \cM_F$ has a minimum, which we call a \define{leaf node}. The set of leaf nodes of $\cM_F$ is finite.
    \item Every leaf node has a (unique) least upper bound $p_{\infty}=(\star,t_n)$, which we call the \define{root node}.
    \item Any PFD tree module $\mathcal{F}:(\cM_F,\cle)\to\vect$ pushes forward along the natural projection map $\pi:\cM_F \to \R$ to a PFD $\R$-module $\pi_*\mathcal{F}:(\R,\leq) \to \vect$, which is defined pointwise as
    \[
        \pi_*\mathcal{F}(s):=\bigoplus_{p\in \pi^{-1}(s)} \mathcal{F}(s).
    \]
\end{itemize}
\end{proposition}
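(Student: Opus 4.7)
The proof plan is to unwind the definition of constructibility and note that each bullet is essentially immediate from one of the three defining conditions, plus the fact (already stated in the paper) that a constructible persistent set has a display poset that realizes as a finite cell complex (i.e.~a finite tree).

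For the first bullet, I would argue as follows. Given a maximal chain $C \subseteq \cM_F$, let $s := \inf\{\pi(p) : p \in C\}$. Since $S(t) = \varnothing$ for $t < t_0$, we have $s \geq t_0$, and since each transition map $S(t \leq t')$ is a bijection on every half-open interval $[t_i, t_{i+1})$, the value $s$ cannot lie strictly inside any such interval (otherwise the chain could be extended slightly downward to a proper superchain, contradicting maximality). Hence $s = t_i$ for some $i$, and the unique element of $C$ at height $t_i$ (existing because $C$ is a chain and pulling back via the bijections on each interval) is its minimum, i.e.~a leaf. Finiteness of the leaf set then follows from the fact that leaves are in bijection with elements of $\bigsqcup_i \bigl(S(t_i) \setminus \mathrm{Im}\,S(t_{i-1} \leq t_i)\bigr)$, a finite union of finite sets (each $S(t_i)$ is finite because the realized display poset is a finite cell complex).

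For the second bullet, the condition $S(t) = \{\star\}$ for $t > t_n$ combined with bijectivity of transition maps forces $S$ to collapse to a singleton by time $t_n$; more precisely, the element $p_\infty = (\star, t_n)$ is an upper bound of every $p \in \cM_F$ by functoriality of $S$, since any $(i,s)$ with $s \leq t_n$ maps via $S(s \leq t_n)$ to the single element $\star$. That $p_\infty$ is the \emph{least} such upper bound is obtained by observing that for any $s' < t_n$ with the property that $S(s')$ contains multiple elements distinguishing some pair of leaves, $(\star, s')$ fails to bound one of them; so $t_n$ is the minimal height at which all chains have converged.

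For the third bullet, the only nontrivial claim is pointwise finite-dimensionality of $\pi_*\mathcal{F}(s) = \bigoplus_{p \in \pi^{-1}(s)} \mathcal{F}(p)$. Here $\pi^{-1}(s)$ is identified with $S(s) \times \{s\}$, which is finite by constructibility (on every half-open interval between the $t_i$'s, $S$ is in bijection with the finite set $S(t_i)$, and the $S(t_i)$ themselves are finite as noted above). Since $\mathcal{F}$ is PFD by hypothesis, each summand $\mathcal{F}(p)$ is finite-dimensional, so the (finite) direct sum is too. The structure maps $\pi_*\mathcal{F}(s \leq t)$ are assembled from the maps $\mathcal{F}(p \cle q)$ where $q$ is the image of $p$ under the persistent set's transition, and functoriality of $\pi_*$ is then formal.

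The only even slightly subtle step is the first one---ensuring that the infimum of heights in a maximal chain is actually attained and equals some $t_i$---but this is precisely what constructibility buys us, so there is no real obstacle. The rest is bookkeeping with the definition.
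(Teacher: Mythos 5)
The paper asserts this proposition without proof (``The following consequences of this assumption are obvious''), so there is no reference proof to compare against; your job was to supply the missing details, and you have done so essentially correctly for all three bullets. The first and third bullets are handled cleanly: the key point for the first is that the infimum of heights in a maximal chain must sit at some $t_i$ and must be attained (else the chain extends downward, contradicting maximality), and the key point for the third is that each fibre $\pi^{-1}(s)$ is finite. A small remark on finiteness: you route it through the ``finite cell complex'' remark in the paper, which works, but note that the constructibility definition already lands in $\set$ (finite sets), so finiteness of each $S(t_i)$ is immediate from the definition.

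One genuine imprecision worth naming concerns the second bullet. The proposition literally reads $p_\infty = (\star, t_n)$, but the constructibility definition only forces $S(t) = \{\star\}$ for $t > t_n$; it does not constrain $S(t_n)$ itself, nor does it rule out $S(s)$ already being a singleton for some $s < t_n$ (the defining collection $\tau$ need not be chosen minimally). Your phrase ``$(\star, s')$ fails to bound one of them'' is also ill-formed when $S(s')$ has several elements, since there is then no canonical element $\star$ of $S(s')$ to refer to. What you should say is: if $(y,s)$ is an upper bound of all leaves, then every element at height $\leq s$ lies below $(y,s)$ (because every element of $\cM_F$ at height below $t_n$ has some leaf beneath it), forcing $S(s)$ to be a singleton; conversely if $S(s)$ is already a singleton for some $s < t_n$, then the true least upper bound sits at that lower height, and the claim $p_\infty = (\star, t_n)$ only holds under the implicit normalization that $\tau$ is minimal. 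This is a gap shared by the paper's statement itself, and it is harmless in all downstream uses, but it is worth being aware that the argument requires a minimality convention that neither the definition nor your proof makes explicit.
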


\begin{remark}
When the need arises, we use notation $\pi_F:\mathcal{M}_F \to \R$ for the projection map. The extra notation will be necessary when comparing merge trees $\mathcal{M}_F$ and $\mathcal{M}_G$ later in the paper.
\end{remark}

\begin{remark}
In Appendix \ref{sec:existence_of_merge_tree}, we give an alternative construction of a merge tree which is guaranteed to produce a tree, in the metric sense, without any tameness assumption.
\end{remark}

\begin{theorem}\label{thm:untwisted}
Let $(\cM_F,\cle)$ be a tame generalized merge tree. A pointwise finite dimensional tree module $\mathcal{F}:(\cM_F,\cle) \to \vect$ is real interval decomposable if and only if it is untwisted.
\end{theorem}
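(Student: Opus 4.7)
The forward implication is routine. Suppose $\mathcal{F} = \bigoplus_k \mathcal{F}_k$ with each $\mathcal{F}_k$ having totally ordered support. For any pairwise incomparable $p_1, \ldots, p_n \in \cM_F$ with common upper bound $p$, each summand $\mathcal{F}_k$ contains at most one $p_j$ in its support, since two incomparable elements cannot both lie in a chain. Writing $\Ima \mathcal{F}(p_j \cle p) = \bigoplus_k \Ima \mathcal{F}_k(p_j \cle p)$ and observing that the nonzero contributions for different $j$ are supported on pairwise disjoint collections of summand indices $k$, one concludes that the images are independent in $\mathcal{F}(p) = \bigoplus_k \mathcal{F}_k(p)$.

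For the reverse implication I would proceed by induction on the number of leaves of $\cM_F$, which is finite by tameness (Proposition~\ref{prop:constructible-PFD}). The base case of a single leaf forces $\cM_F$ to be a chain, so Crawley-Boevey's theorem yields the decomposition. For the inductive step, I would select a merge point $m \in \cM_F$ of minimal height, so the $k \geq 2$ subtrees $T_1, \ldots, T_k$ below $m$ (downsets of the immediate predecessors $p_1, \ldots, p_k$ of $m$) are proper sub-merge-trees with strictly fewer leaves. Untwistedness is inherited by each $\mathcal{F}|_{T_i}$ as a restriction-level condition, so by induction each $\mathcal{F}|_{T_i}$ admits a real interval decomposition.

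The gluing step proceeds as follows. First, refine the decomposition of each $\mathcal{F}|_{T_i}$ by a coordinate change at $p_i$ (propagated down the chain through $T_i$) so that the bars surviving to $p_i$ have basis elements whose images under $\mathcal{F}(p_i \cle m)$ form a basis of $\Ima \mathcal{F}(p_i \cle m)$. By untwistedness applied at $m$, $\bigoplus_i \Ima \mathcal{F}(p_i \cle m)$ is a well-defined direct summand $V^{\mathrm{below}}_m \subseteq \mathcal{F}(m)$; pick a complement $V^{\mathrm{new}}_m$. Since every upset in a generalized merge tree is totally ordered---each $(x,s)$ has a unique ancestor at each height $t \geq s$---the set $U_m$ is a chain, so the quotient persistence module $p \mapsto \mathcal{F}(p)/\sum_i \Ima \mathcal{F}(p_i \cle p)$ on $U_m$ is a $\Bbbk$-module on $\R$ and Crawley-Boevey decomposable. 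Lift each of its bars back to $\mathcal{F}|_{U_m}$ so that at $m$ it starts in $V^{\mathrm{new}}_m$, and extend by the structure maps.

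The candidate decomposition of $\mathcal{F}$ is then the collection of (a) bars from each $\mathcal{F}|_{T_i}$ that die within $T_i$, (b) surviving bars from each $\mathcal{F}|_{T_i}$ extended upward through $U_m$ along the structure maps until they vanish, and (c) the lifted bars coming from the quotient. Each has totally ordered support: a subchain of $T_i$, a subchain of $U_{v_i}$ for a leaf $v_i$ of $T_i$, or a subchain of $U_m$. The main obstacle will be verifying that this collection forms a pointwise basis of $\mathcal{F}$ at every point. At points inside the $T_i$'s this is immediate from the inductive hypothesis; at points $p \in U_m$ one must invoke the full strength of untwistedness with upper bound $p$ (not just $m$) to conclude $\sum_i \Ima \mathcal{F}(p_i \cle p) = \bigoplus_i \Ima \mathcal{F}(p_i \cle p)$, and the refinement of each $\mathcal{F}|_{T_i}$'s decomposition must be made simultaneously compatible with the entire flag of kernels $\ker \mathcal{F}(p_i \cle p)$ as $p$ varies over $U_m$. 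This simultaneous compatibility is a linear-algebra statement about filtered bases---doable because $\{\ker \mathcal{F}(p_i \cle p)\}_{p \in U_m}$ is an increasing flag in the finite-dimensional space $\mathcal{F}(p_i)$---and constitutes the delicate technical heart of the argument.
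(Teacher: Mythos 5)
Your forward implication is correct and is essentially the same argument the paper gives. Your high-level plan for the reverse direction---induction on the number of leaves with a gluing step at a chosen merge point---also matches the paper's in spirit, but the specific choice of merge point breaks the argument.

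You select $m$ to be the merge point of \emph{minimal} height, so that the $T_i$'s (downsets of the immediate predecessors of $m$) become your inductive pieces. Because $m$ is the lowest merge point, each $T_i$ contains no merge points at all: it is a single chain with exactly one leaf, so you are not really invoking induction, only the base case. The more serious problem is that $T_1\cup\cdots\cup T_k\cup U_m$ does not exhaust $\cM_F$. Take a tree with leaves $a,b,c$ where $a$ and $b$ merge at height $1$ (this is your $m$) and the result merges with $c$ at height $2$. The portion of the $c$-branch below height $2$ is neither below $m$ (so not in any $T_i$) nor an ancestor of $m$ (so not in $U_m$). Your candidate decomposition (a), (b), (c) is supported entirely on $T_1\cup\cdots\cup T_k\cup U_m$, hence vanishes on the $c$-branch even though $\cF$ need not. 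Your own verification checklist---``at points inside the $T_i$'s\ldots; at points $p\in U_m$\ldots''---omits precisely those missing points, which is the symptom of the gap.

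The repair is to split at the \emph{root} $p_\infty$ rather than at the lowest merge point; then $T_1\cup\cdots\cup T_k\cup U_{p_\infty}=\cM_F$, each $T_i$ genuinely has strictly fewer leaves, and the induction carries. This is what the paper does, writing $\cM_F=\cM_1\cup\cM_2$ with $\cM_1\cap\cM_2=U_{p_\infty}$ and each $\cM_i$ owning at least one leaf. Even after this repair, your gluing mechanism (decompose the quotient module on $U_m$, then lift) is more delicate than the paper's, and you acknowledge but do not resolve the simultaneous-filtered-basis compatibility it requires. The paper sidesteps this by pushing $\cF$, $\cF_1$, $\cF_2$ forward along $\pi:\cM_F\to\R$ via the additive functor $\pi_*$, decomposing $\pi_*\cF$ by Crawley-Boevey, isolating the summand $F_3$ supported above $\pi(p_\infty)$, pulling it back to $\cF_3:=\pi^*F_3$, and using the untwisted condition on the root ray to show $\cF\cong\cF_1\oplus\cF_2\oplus\cF_3$. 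Additivity of $\pi_*$ absorbs the bookkeeping your lifting approach would have to do by hand, so if you keep your outline you should either adopt that device or carry out the filtered-basis compatibility in full.
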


\begin{proof}

We proceed by induction on the number of leaf nodes $n$ of a generalized merge tree $(\cM_F,\cle)$.
If $n=1$, then $\cM_F$ is totally ordered and any tree module is automatically untwisted.
The result then follows from the usual decomposition theorem for $\R$-indexed persistence modules.
If the result is true for any PFD tree module on a generalized merge tree with $\ell$ leaves where $1\leq \ell \leq m$, then we now show the result is true for tree modules on trees with $m+1$ leaf nodes.

To begin, we assume that $(\cM_F,\cle)$ has $m+1$ leaf nodes and a root node $p_{\infty}$.
Write $\cM_F$ as the union of two subsets $\cM_1$ and $\cM_2$ where each subset has at least one leaf node and $\cM_1\cap \cM_2 = U_{p_{\infty}}$---the intersection is the principal up set at the root node, which is totally ordered.
Associated to any PFD tree module $\mathcal{F}:(\cM_F,\cle)\to\vect$ and such a decomposition $\cM_F=\cM_1\cup \cM_2$ are two natural submodules $\mathcal{F}_1$ and $\mathcal{F}_2$.
We define $\mathcal{F}_1$ via three cases:
\begin{enumerate}
    \item If $p\in M_1$ and $p<p_{\infty}$, then $\mathcal{F}_1(p):=\mathcal{F}(p)$.
    \item If $p\geq p_{\infty}$, then $\calF_1(p):=\sum_{q\in M_1|q<p_{\infty}} \Ima \mathcal{F}(q <p)$.
    \item If $p\in M_2\setminus M_1$, then $\mathcal{F}(p)=0$.
\end{enumerate}
$\mathcal{F}_2$ is defined exactly in the same manner, simply by interchanging $1$ and $2$ in the above definition.
By the induction hypothesis both $\cF_1$ and $\cF_2$ are untwisted and hence real interval decomposable, i.e. $\cF_1 \cong \oplus_{\alpha \in I_1} \cF_{\alpha}$ and $\cF_2 \cong \oplus_{\beta \in I_2} \cF_{\beta}$ and each of $\cF_{\alpha}$ and $\cF_{\beta}$ are supported on totally ordered subsets of $\cM_1$ and $\cM_2$, respectively.

By Proposition~\ref{prop:constructible-PFD}, $\pi_*\mathcal{F}$, $\pi_*\cF_1$ and $\pi_*\cF_2$ are each PFD $\R$-modules.
Moreover, since $\pi_*:\Fun(\cM_F,\vect) \to \Fun(\R,\vect)$ is an additive functor, we know that
\[
    \pi_*\cF_1 \cong \pi_* \oplus_{\alpha} \cF_{\alpha} \cong \oplus \pi_*\cF_{\alpha},
\]
which agrees, up to permuting factors with the Remak decomposition of $\pi_*\cF_1$ guaranteed by the usual decomposition theorem in persistence. The symmetric statement is true for $\pi_*\cF_2$ as well.
This implies that we can collect the interval modules that appear in the Remak decomposition of $\pi_*\cF$ into three terms:
\[
    \pi_*\cF \cong F_1 \oplus F_2 \oplus F_3 \qquad \text{where} \qquad F_1 \cong \pi_*\cF_1 \qquad \text{and} \qquad F_2\cong \pi_*\cF_2.
\]
We claim that $F_3$ is the direct sum of real interval modules that are born at or after $\pi(p_{\infty})$. 
In particular, the support of $F_3$ is contained in $[\pi(p_{\infty}),\infty)$.
This is clear because for any $t< \pi(p_{\infty})$ the set of pre-images $\pi^{-1}(t)$ can be partitioned into points that are in $\cM_1$ or $\cM_2$ exclusively, because $\pi^{-1}(t)\cap \cM_1 \cap \cM_2 =\varnothing$.
This implies that
\[
    \text{for } t< \pi(p_{\infty}) \qquad \pi_*\cF(t) \cong \pi_*\cF_1 (t) \oplus \pi_*\cF_2(t).
\]
We now use this observation to define $\cF_3:=\pi^*F_3$.

The proof of this direction concludes by showing that $\cF\cong \cF_1\oplus \cF_2\oplus \cF_3$.
This decomposition obviously holds for the $\cM_F$-module when restricted to the set of points strictly below $p_{\infty}$.
Since $\cF_i$ for $i=1,2,3$ are naturally submodules of $\cF$, it suffices to show that whenever $p\geq p_{\infty}$ and $v_i\in \cF_i(p)$ are chosen such that $v_1+v_2+v_3=0$, then each $v_i=0$.
Of course if $v_1+v_2+v_3=0$, then $v_3\in (\cF_1(p)\oplus \cF_2(p))\cap \cF_3(p)=0$, so $v_3=0$.
Moreover $v_1$ and $v_2$ are the images of $w_1$ and $w_2$ under the maps $\cF_1(q_1 < p)$ and $\cF_2(q_2 < p)$, so the untwisted condition implies that $v_1$ and $v_2$ are independent, hence 0 as well.

A real interval module satisfies the untwisted condition and this property is inherited by direct sums. This proves the converse statement and completes the proof of the theorem.
\end{proof}

\subsection{Injectivity of the Barcode Transform}\label{subsec:injectivity-barcode-transform}

The class of real interval decomposable tree modules is nice for various reasons, but one reason of interest to us is that the \emph{barcode transform} is injective when restricted to real interval decomposable modules.

\begin{theorem}\label{thm:injective-barcode-transform}
Suppose that $\calF:(\cM,\cle) \to \vect$ and $\calF':(\cM',\cle') \to \vect$ are two real interval decomposable tree modules with finitely many intervals in their Remak decompositions. If their associated barcode decorated merge trees $\mathcal{BF}:\cM \to \BCS$ and $\mathcal{BF}':\cM' \to \BCS$ are isomorphic then $\calF\cong \calF'$.
\end{theorem}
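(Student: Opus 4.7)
The plan is to upgrade the isomorphism of barcode decorated merge trees to an equality of rank invariants on a common merge tree, and then invoke finite-support Möbius inversion, which becomes tractable precisely because of the real interval decomposability hypothesis.

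First, I would unpack the hypothesis $\mathcal{BF} \cong \mathcal{BF}'$ as a height-preserving poset isomorphism $\phi : (\cM, \cle) \to (\cM', \cle')$ with $\mathcal{BF}' \circ \phi = \mathcal{BF}$, and use $\phi$ to identify $\cM$ with $\cM'$ so that $\calF$ and $\calF'$ both become tree modules on a common poset with a common BDMT. By hypothesis, write the finite Remak decompositions as
\[
\calF \cong \bigoplus_\alpha \Bbbk_{[p_\alpha, q_\alpha]}, \qquad \calF' \cong \bigoplus_\beta \Bbbk_{[p_\beta', q_\beta']},
\]
where $\Bbbk_{[p,q]}$ denotes the real interval module supported on the chain $[p,q] \subseteq \cM$, and let $m_\calF(p,q)$ and $m_{\calF'}(p,q)$ denote the corresponding multiplicities. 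The goal becomes showing $m_\calF = m_{\calF'}$.

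The next step is to read the rank invariant $\rho_\calF(p, q) := \dim \Ima \calF(p \cle q)$ off the barcode decorated merge tree. The map $\calF(p \cle q)$ is precisely the internal map of the $\R$-module $\calF|_{U_p}$ between levels $\pi(p)$ and $\pi(q)$, and because $\calF|_{U_p}$ vanishes strictly below $\pi(p)$, every bar in $\mathcal{BF}(p) = \BC(\calF|_{U_p})$ has left endpoint at least $\pi(p)$. Standard persistence then yields
\[
\rho_\calF(p, q) = \#\{\, I = [a,b] \in \mathcal{BF}(p) : a = \pi(p),\; b \geq \pi(q)\,\},
\]
an expression that depends only on $\mathcal{BF}$. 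Consequently $\mathcal{BF} = \mathcal{BF}'$ forces $\rho_\calF(p,q) = \rho_{\calF'}(p,q)$ for every comparable pair $p \cle q$.

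Finally, real interval decomposability produces the zeta-type identity
\[
\rho_\calF(p, q) = \sum_{p' \cle p,\; q \cle q'} m_\calF(p', q'),
\]
and analogously for $\calF'$. Both $m_\calF$ and $m_{\calF'}$ are supported on the finite set $R := \{(p_\alpha, q_\alpha)\}_\alpha \cup \{(p_\beta', q_\beta')\}_\beta$. Ordering $R$ by reverse interval inclusion produces a finite poset on which the displayed identity is literally a zeta transform; Möbius inversion then expresses $m_\calF(p,q)$ (respectively $m_{\calF'}(p,q)$) as an explicit alternating sum of values of $\rho_\calF$ (respectively $\rho_{\calF'}$) on pairs in $R$. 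Since $\rho_\calF = \rho_{\calF'}$, we conclude $m_\calF = m_{\calF'}$ everywhere, and hence $\calF \cong \calF'$.

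I expect the main technical obstacle to lie in the Möbius inversion step: the notions of ``successor'' and ``child'' needed to write down an inversion formula live not in the continuous merge tree $\cM$ but in the finite subtree spanned by the endpoints appearing in $R$, and one must verify that the resulting combinatorial Möbius formula correctly inverts the zeta relation above. The finiteness of the Remak decomposition is precisely what tames this bookkeeping and makes the argument go through cleanly; without it one would need a continuous version of Möbius inversion or a more abstract uniqueness principle for persistence modules over tree posets.
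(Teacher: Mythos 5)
Your proposal is correct, and it takes a genuinely different route from the paper's own proof. The paper argues by a greedy induction: it identifies an \emph{oldest} bar $(I,m_I,v)$ in the barcode decoration (i.e., one whose left endpoint is minimal among all bars viewed from all leaves), uses Proposition~\ref{prop:barcode-decoration-expression} to show this bar must coincide with a unique interval summand $\Bbbk_{\calI_j}^{m_{\calI_j}}$ appearing in the Remak decomposition of $\calF$, observes that the same interval summand must then appear in $\calF'$, strips it off from both modules and from the shared barcode decoration, and recurses. Your argument instead passes through the rank invariant $\rho_\calF(p,q) = \dim \Ima \calF(p \cle q)$: you show $\rho_\calF$ is a function of $\mathcal{BF}$ alone, write down the zeta-type identity $\rho_\calF(p,q) = \sum_{p' \cle p,\, q \cle q'} m_\calF(p',q')$, and invert it by Möbius inversion on the finite subposet of intervals spanned by the supports of $m_\calF$ and $m_{\calF'}$.

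Both arguments are sound and exploit the same two hypotheses (real interval decomposability to get a well-defined multiplicity function on intervals of $\cM$, and finiteness of the Remak decomposition to tame the recursion/inversion). Your route is closer in spirit to the rank-invariant and Möbius-function framework for generalized persistence diagrams; the paper's route is a more elementary greedy peeling. What each buys: the paper's proof is self-contained and avoids any combinatorics of Möbius functions on interval posets of a tree, whereas yours produces an explicit reconstruction formula $m_\calF = \mu_R * \rho_\calF$ that makes the ``inverse problem'' character of the result transparent and could in principle be used algorithmically. One small point worth being careful about when filling in the Möbius step: the zeta identity holds over the full (continuous) interval poset of $\cM$, but since $m_\calF$ and $m_{\calF'}$ are supported on a common finite set $R$, restricting both sides to pairs in $R$ yields a genuine zeta transform over the finite poset $(R, \supseteq)$, and standard Möbius inversion applies there without any continuous-poset subtleties; that is exactly the observation that makes the argument close. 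You should also reconcile interval-endpoint conventions (half-open vs.\ closed) to make the rank formula $\rho_\calF(p,q) = \#\{ [a,b) \in \mathcal{BF}(p) : a = \pi(p),\ b > \pi(q)\}$ exact, but this is cosmetic.
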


To prove the theorem, we introduce some notation. Let $\cI \subset \cM_F$ be a totally ordered subset. We use $\Bbbk_{\calI}:(\cM_F,\cle) \to \vect_{\Bbbk}$
to denote the functor that assigns the ground field $\Bbbk$ to points $p\in \cI$ and where any ordered pair $p\cle q$ in $\cI$ is assigned the identity map. If a decorated merge tree $\mathcal{F}:\cM_F \to \Vect$ is real interval decomposable, then it can be expressed as
\begin{equation}\label{eqn:interval_decomposition}
\mathcal{F} \approx \bigoplus_{\mathcal{R}(\calF)} \Bbbk_{\calI},
\end{equation}
where $\mathcal{R}(\calF):=\{ (\cI,m_{\cI})\}$
indicates the collection of intervals $\cI$ that appear in the Remak decomposition of $\calF$ along with their multiplicity; the sum in \eqref{eqn:interval_decomposition} is taken with multiplicity.

The following proposition is obvious.

\begin{proposition}\label{prop:barcode-decoration-expression}
Suppose that the tree module $\calF:(\cM_F,\cle) \to \vect$ is real interval decomposable. Then the associated barcode decorated merge tree $\mathcal{BF}:\cM_F \to \BCS$
is determined at each point by the formula
\[
    \mathcal{BF}(p) := \{ (U_p \cap \cI, m_{\cI}) \mid (\cI,m_{\cI}) \in \mathcal{R}(\calF)\}
\]
where $U_p$ is the principal up set at $p$ in $\cM_F$. Moreover, when $\cM_F$ is tame, the associated barcode decorated merge tree is completely described as the disjoint union of the barcodes when viewed from each leaf node $v$, i.e.
\[
    \mathcal{BF} = \bigsqcup_{v\in L(\cM)} \mathcal{BF}(v),
\]
where $\mathcal{BF}(v)$ is determined via intersection with the principal up set $U_v$ as indicated above. We note that every bar $(I,m_I)$ in the barcode viewed from $v$ can be expressed as a sum
\[
    (I,m_I,v) = \sum_{\cI\in\mathcal{R}(\calF)} U_v \cap (\calI,m_{\calI}) \qquad \text{where} \qquad \calI \cap U_v = I.
\]
Informally, we may write this as $m_I=m_{\calI_1}+\cdots+m_{\calI_k}$
\end{proposition}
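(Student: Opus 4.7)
The plan is to chase the real interval decomposition through the barcode transform. Starting from $\calF \cong \bigoplus_{(\calI,m_\calI)\in \mathcal{R}(\calF)} \Bbbk_{\calI}^{\oplus m_\calI}$, fix a point $p\in \cM_F$ and consider the restriction functor to the principal up set, $(-)|_{U_p}:\Fun(\cM_F,\vect)\to\Fun(\R,\vect)$. This restriction is additive (it is a composition of pullback along the order-embedding $U_p\hookrightarrow \cM_F$ and the pushforward to $\R$ along $\pi$, both of which preserve direct sums), so
\[
    \calF|_{U_p} \;\cong\; \bigoplus_{(\calI,m_\calI)\in \mathcal{R}(\calF)} \bigl(\Bbbk_\calI|_{U_p}\bigr)^{\oplus m_\calI}.
\]
Since $U_p$ is totally ordered in $\cM_F$ (a standard property of merge tree posets) and each $\calI$ is totally ordered, the intersection $\calI\cap U_p$ is a totally ordered chain whose image under $\pi$ is an interval $I_{\calI,p}\subseteq [\pi(p),\infty)$; consequently $\Bbbk_\calI|_{U_p}$ is isomorphic to the interval $\R$-module $\Bbbk_{I_{\calI,p}}$. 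Applying the barcode functor (via Crawley-Boevey's theorem), which sends direct sums to disjoint unions of intervals with multiplicity, yields exactly
\[
    \mathcal{BF}(p) \;=\; \{(U_p\cap \calI,\, m_\calI) \mid (\calI,m_\calI)\in \mathcal{R}(\calF)\},
\]
with the convention that summands for which $U_p\cap \calI = \varnothing$ contribute nothing.

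For the second claim, assume $\cM_F$ is tame, so by Proposition~\ref{prop:constructible-PFD} every point of $\cM_F$ lies in the principal up set of some leaf node $v$, and the set of leaf nodes $L(\cM_F)$ is finite. The barcode DMT is determined by restriction (Proposition~\ref{prop:barcode_transform}), so for any $p$ on the chain from a leaf $v$ to the root, the barcode $\mathcal{BF}(p)$ is recovered from $\mathcal{BF}(v)$ by intersecting each bar with $[\pi(p),\infty)$; hence the full barcode decoration is assembled as the disjoint union $\bigsqcup_{v\in L(\cM_F)} \mathcal{BF}(v)$. Finally, for a fixed leaf $v$, the first paragraph gives $\mathcal{BF}(v)=\{(U_v\cap\calI,\,m_\calI)\}_{\calI\in\mathcal{R}(\calF)}$; collecting together all indices $\calI_1,\ldots,\calI_k$ whose intersections with $U_v$ project to the same interval $I$ yields a single bar $(I,m_I,v)$ with multiplicity $m_I=m_{\calI_1}+\cdots+m_{\calI_k}$, which is precisely the claimed expression.

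The only substantive point is the interchange of restriction and direct sum together with the observation that restriction of an indecomposable $\Bbbk_\calI$ along $U_p$ remains an interval module; both are essentially formal given that $U_p$ and each $\calI$ are totally ordered. Consequently, as noted in the text, the proof amounts to bookkeeping with the Remak decomposition.
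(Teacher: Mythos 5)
Your proof is correct, and it supplies exactly the reasoning the paper implicitly takes for granted: the text presents Proposition~\ref{prop:barcode-decoration-expression} with the single remark that it ``is obvious'' and gives no argument. Your chain of observations---restriction to the principal up set $U_p$ is an additive functor, it sends each summand $\Bbbk_{\calI}$ to the interval $\R$-module on $\pi(\calI\cap U_p)$, uniqueness of barcodes then yields the pointwise formula, tameness furnishes a finite leaf set whose up sets cover $\cM_F$ so the decoration is packaged as $\bigsqcup_{v}\mathcal{BF}(v)$, and collecting indices $\calI$ with the same trace on $U_v$ gives the multiplicity identity---is precisely the bookkeeping the authors have in mind.

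One small point worth making explicit: to conclude that $\Bbbk_{\calI}|_{U_p}$ is an interval module, you need $\calI\cap U_p$ to be \emph{convex}, not merely totally ordered. This holds because $U_p$ is an up set (hence convex) and the support $\calI$ of an indecomposable $\Bbbk_{\calI}$ must itself be convex in order for $\Bbbk_{\calI}$ to be a well-defined functor (if $a < b < c$ with $a,c\in\calI$ and $b\notin\calI$ then the composite $\Bbbk\to 0\to\Bbbk$ could not be the identity). The paper leaves this convexity assumption implicit in the phrase ``totally ordered support,'' so your proof mirrors the paper's own level of care here, but spelling it out would make the step ``consequently $\Bbbk_{\calI}|_{U_p}$ is isomorphic to the interval $\R$-module'' airtight.
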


\begin{proof}[Proof of Theorem \ref{thm:injective-barcode-transform}]
Two barcode decorated merge trees are isomorphic if there exists an underlying isomorphism of their generalized merge trees $\varphi:\cM \to \cM'$ and $\psi:\cM' \to \cM$ so that $\mathcal{BF}(x)=\mathcal{BF}'(\varphi(x))$ and $\mathcal{BF}'(x')=\mathcal{BF}(\psi(x'))$ for all $x\in\cM$ and $x'\in\cM'$.
The argument proceeds by proving that $\calF\cong \calF'\circ \varphi$ and symmetrically $\calF'\cong\calF\circ \psi$.
Both arguments would be completely symmetric over the two generalized merge trees, so it suffices to consider the case where $\cM=\cM'$ and where we can assume that $\varphi$ and $\psi$ are identity maps.
We now proceed to showing that if $\mathcal{BF}=\mathcal{BF}'$ then $\calF\cong \calF'$ as $\cM$-modules.

The argument proceeds by showing that every real interval $(\calI,m_{\calI})$ that appears in the Remak decomposition for $\calF$ must appear in the Remak decomposition for $\calF'$ with no intervals left over. This will establish the isomorphism of $\calF$ and $\calF'$.
We start by considering an \emph{oldest} interval $(I,m_I,v)$ in $\calB[\calF]$, this means that if we consider the projection of $I$ to $\R$ via the natural map $\pi:\cM \to \R$ then $\inf\{\pi(I)\}\leq \inf\{\pi(J)\}$ for any $(J,m_J,w)\in \mathcal{BF}$; such an oldest interval exists due to our finiteness assumption.
By Proposition~\ref{prop:barcode-decoration-expression} we know that any $(I,m_I,v)$ has a unique (up to permutation) expression
\[
    m_I = m_{\calI_1} + \cdots + m_{\calI_k}
\]
for $\calI_j\in \mathcal{R}(\calF)$ with $\calI_j\cap U_v = I$.
We claim that if $(I,m_I,v)$ is an oldest interval then there exists an $(\calI_j,m_{\calI_j})\in \mathcal{R}(\calF)$ with $(I,m_I,v)=(\calI_j,m_{\calI_j})$.
Indeed, if there does not exist a unique contributor, then this contradicts the assumption that $(I,m_I,v)$ was oldest.
The interval $\calI_j$ is the oldest contributing factor in the sum for $m_I$ displayed above.
Since the barcode decoration associated to $\calF'$ is equal to the barcode decoration associated to $\calF$, then $(I,m_I,v)$ must also be an oldest interval and there must be the same oldest contributor $\calI_j\in\mathcal{R}(\calF')$.
This proves that $\calF'\cong \calG'\oplus \Bbbk_{\calI_j}^{m_{\calI_j}}$ and of course $\calF\cong \calG \oplus \Bbbk_{\calI_j}^{m_{\calI_j}}$ and $\mathcal{BG}'=\mathcal{BG}$.
The argument repeats with the barcode decorated merge tree $\calB[\calG]$ until it is empty.
\end{proof}

\subsection{Lift Decorated Merge Trees}\label{sec:lift_decorated}

The process of extracting a decorated merge tree from a dataset is an algorithmic challenge that we need to address.  
When a persistent space $F:(\R,\leq) \to \Top$ is a filtration of a simplicial complex, existing software can be used to extract a merge tree $\cM_F$ and a degree-$k$ persistent homology barcode $B$.
One would hope to use this easily accessible data to reverse engineer the underlying decorated merge tree $\cF:\cM_F \to \vect$, for which the persistent homology barcode is the pushforward; i.e., $B = \pi_\ast \cF$, where $\pi:\cM_F \to \R$ is the projection function.
Indeed, algorithms presented in Section \ref{sec:examples} are based on such a reverse engineering process and can be used to produce decorated merge trees from point cloud and filtered graph data.
In this subsection, we provide the theoretical underpinning of these algorithms.

Let $F:(\R,\leq) \to \Top$ be a filtration of a simplicial complex with a tame merge tree $\cM_F$. 
The standard algorithm for extracting the barcode $B$ records a \define{birth simplex} $\sigma$ for each interval $I=[b,d)$ in the barcode \cite{elz2000}. 
This is to say that $\sigma$ is a simplex with filtration value $b$ whose birth creates a representative homology class for $I$.
The \define{birth point of $I$} is the unique point $p \in \cM_F$ with $\pi(p) = b$ and where $p$ is an ancestor of all the vertices of $\sigma$.
Let $q \in \cM_F$ be the unique ancestor of $p$ such that $\pi(q) = d$.

\begin{definition}\label{def:lift_decorated}
    With the notation above, define the \define{lift of $I$} to be the decorated merge tree $\mathcal{I}:\cM_F \to \vect$ defined by
    \[
    \mathcal{I}(r) := \left\{\begin{array}{cc}
    k & p \cle r \cle q \\
    0 & \mbox{otherwise.}
    \end{array}\right.
    \]
    The \define{lift of $B$} is the decorated merge tree
    \[
    \widehat{\cF} := \bigoplus_{I \in B} \mathcal{I}.
    \]
    A decorated merge tree is called a \define{lift decorated merge tree} if it is obtained as the lift of some barcode.
\end{definition}

Based on the above discussion, it is straightforward to determine the lift decorated merge tree directly from the filtered simplicial complex. However, the lift DMT is not guaranteed to be isomorphic to the \emph{true} DMT $\cF:\cM \to \vect$; indeed, we have $\widehat{\cF} \approx \cF$ if and only if $\cF$ is real  interval decomposable, in the sense of Definition \ref{def:real_interval_decomposable}. The task which we aim to address for the rest of this subsection is to determine a condition on the filtration $F$ which
\begin{itemize}
    \item can be verified directly from the merge tree $\cM_F$ and degree-$k$ barcode $B$, and
    \item implies that $\widehat{\cF} \approx \cF$. 
\end{itemize}
Our proposed condition is the following.

\begin{definition}\label{def:disjointness}
    Let $F:(\R,\leq) \to \Top$ be a filtered simplicial complex with merge tree $\cM_F$ and degree-$k$ barcode $B$. 
    We say that $F$ has the \define{$H_k$-disjointness property} if for any pair of bars $I = [b,d)$ and $I'=[b',d')$ in $B$ with incomparable birth points $p$ and $p'$, their death times are less than their merge height (Definition~\ref{defn:LCA-merge-height}), i.e.
    \[
    \min\{d,d'\} < \mathrm{merge}(p,p').
    \]
    Intuitively, this means that the lifts of $I$ and $I'$ have disjoint support.
\end{definition}

\begin{proposition}\label{prop:disjointness}
    If a filtered simplicial complex $F:(\R,\leq) \to \Top$ has the $H_k$-disjointness property, then the lift decorated merge tree and the concrete decorated merge tree are isomorphic, i.e.~$\widehat{\cF} \approx \cF$.
\end{proposition}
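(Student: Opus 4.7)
The plan is to invoke the representation-theoretic machinery of Section~\ref{sec:representations}. Since $\widehat{\cF}$ is built as a direct sum of interval modules on chains in $\cM_F$, it is real interval decomposable by construction. The strategy is to show that $H_k$-disjointness implies $\cF$ is untwisted---and hence also real interval decomposable by Theorem~\ref{thm:untwisted}---and that $\cF$ and $\widehat{\cF}$ carry the same barcode decoration; the conclusion $\cF \approx \widehat{\cF}$ then follows from Theorem~\ref{thm:injective-barcode-transform}.

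The core observation is that $H_k$-disjointness forces the birth points of bars alive in the component of any $p \in \cM_F$ at time $\pi(p)$ to be pairwise comparable in $(\cM_F, \cle)$. Indeed, if two such bars $I, J$ had incomparable births, then $\mathrm{merge}(p_I, p_J) \leq \pi(p) < \min\{d_I, d_J\}$, contradicting disjointness. For incomparable $p_1, \ldots, p_n$ with upper bound $p$, it follows that at most one of the sets $A_i := \{I \in B : p_I \cle p_i,\ d_I > \pi(p)\}$ can be nonempty, because bars in different $A_i$ come from the disjoint past subtrees of the incomparable $p_i$. A parallel consequence is that rebalancing deaths can involve at most one branch below any given merge, so the natural image in $\cF(p)$ of any class in $\cF(p_i)$ lies in the span of the basis classes $\{[z_I]|_p\}$ with $I \in A_i$ together with basis classes from bars born above the least common ancestor of the $p_i$'s (which themselves get attributed to at most one $A_i$).

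Since the classes $\{[z_I]|_p : p_I \cle p,\ d_I > \pi(p)\}$ form a basis of $\cF(p)$---inherited from the barcode basis of the pushforward $\pi_\ast \cF = H_k \circ F$---and the basis elements contributing to $\Ima \cF(p_i \cle p)$ for different $i$ are pairwise disjoint, the images are linearly independent. This shows $\cF$ is untwisted, so by Theorem~\ref{thm:untwisted} it is real interval decomposable; moreover the Remak decomposition assigns to each bar of $B$ the same birth and death points in $\cM_F$ used to construct $\widehat{\cF}$, so the barcode decorations agree and Theorem~\ref{thm:injective-barcode-transform} delivers the isomorphism. The principal obstacle is the subsidiary claim in the second paragraph that $H_k$-disjointness restricts rebalancing interactions so that basis contributions to the images from distinct $p_i$ do not overlap, even via bars born in the trunk; a careful analysis of the simultaneity of alive bars at death events is required to pin this down.
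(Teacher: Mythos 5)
Your high-level strategy is genuinely different from the paper's, but contains a gap that you yourself flag, and that gap is where essentially all of the work lives.

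The paper's proof constructs bases $v_1,\ldots,v_N$ for the persistence module $H_k\circ F$ by starting from representative cycles $c_j$ supported in the component of the birth simplex and then adding correction terms $\lambda^{(j)}_i \iota_{b_i,b_j}(v_i)$; the content of the proof is an induction showing that $H_k$-disjointness lets one take $\lambda^{(j)}_i = 0$ for all $i$ whose birth is in a component incomparable with that of $c_j$, so that each $v_j$ stays in a single component over its lifespan, which is exactly the statement that $\cF$ decomposes as $\widehat{\cF}$. You instead propose to show that $\cF$ is untwisted, invoke Theorem~\ref{thm:untwisted} for real interval decomposability, and then match barcode decorations via Theorem~\ref{thm:injective-barcode-transform}. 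That is a distinct route, and the observation at the start of your second paragraph is correct and useful: if $p_1,\ldots,p_n$ are incomparable with upper bound $p$, then birth points below distinct $p_i$ are themselves pairwise incomparable (because the merge tree is a tree poset), and $H_k$-disjointness forbids two such bars from both being alive at height $\pi(p)$; so at most one of your sets $A_i$ is nonempty.

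The gap is in the sentence ``Since the classes $\{[z_I]|_p : p_I \cle p,\ d_I > \pi(p)\}$ form a basis of $\cF(p)$ --- inherited from the barcode basis of the pushforward $\pi_\ast \cF = H_k \circ F$.'' This is not inherited for free. The barcode decomposition of $\pi_\ast\cF$ provides an $\R$-indexed basis of $H_k(F(t))$ for every $t$, but nothing in Crawley--Boevey's theorem says this basis is compatible with the direct-sum splitting $H_k(F(t)) = \bigoplus_{q\in\pi^{-1}(t)} \cF(q)$; the rebalancing corrections required to maintain a persistence basis through deaths can a priori mix components.  That compatibility is precisely what needs to be proved, and asserting it is circular: it is logically equivalent to the proposition being shown.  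Equivalently, knowing $A_i=\varnothing$ for $i\neq i_0$ does not by itself show $\Ima\cF(p_i\cle p)=0$ for $i\neq i_0$, because that conclusion relies on every class in $\cF(p_i)$ being a combination of the chosen barcode generators $[z_I]$ with $p_I\cle p_i$ --- again, the compatibility claim. You acknowledge this as ``the principal obstacle'' and leave it open; closing it would amount to the paper's induction (or an argument of the same depth), at which point the detour through Theorems~\ref{thm:untwisted} and~\ref{thm:injective-barcode-transform} is no longer saving any work.

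Two smaller points worth noting if you pursue this route. First, the step identifying the Remak decomposition of $\cF$ with the lifts used to build $\widehat{\cF}$ is plausible but not immediate: you must argue that the interval $\cJ$ in the Remak decomposition of $\cF$ projecting to $I_j$ has its minimum at the birth point $p_{I_j}$, which needs the distinct-births normalization (as the paper also invokes) plus an observation that the new summand created at time $b_j$ appears in the component of the birth simplex. Second, Theorem~\ref{thm:injective-barcode-transform} as stated requires finiteness of the Remak decomposition, which should be checked (it holds for finite filtered complexes, so this is minor).
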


This result says that $H_k$-disjointness is sufficient to guarantee that the lifting procedure produces a valid decorated merge tree. Our proof is straightforward but technical, so we delay it to Appendix \ref{sec:technical_proofs_disjointness}.

\section{Computing Interleaving Distances}\label{sec:computational_aspects}

The goal of this section is to develop a practically feasible algorithm for approximating the matching distance between barcode decorated merge trees (Definition \ref{defn:barcode_decorated_merge_tree}). This relies on the notion of \emph{labeling} a DMT.

\subsection{Labeled Distance}\label{sec:labeled_distance}

Throughout this section, we consider generalized merge trees $\cM_F$ arising from persistent spaces $F:(\R,\leq) \rightarrow \Top$ such that the persistent set $\pi_0 \circ F$ is constructible and $\cM_F$ is tame, as in Proposition \ref{prop:constructible-PFD}. 
The finite set of leaves of $\cM_F$ is denoted $L(\cM_F)$. 
The canonical projection function $\cM_F \to \R$ is denoted $\pi_F$. 

\begin{remark}
The results of this section do not require that decorated merge trees arise from persistent spaces at all; they could be derived formally treating tree modules as the fundamental objects. 
\end{remark}

We restrict our attention to the case of barcode decorated merge trees $\mathcal{B}_F:\cM_F \to \BCS$ which are determined by restriction and which have leaves, i.e.~$\mathcal{B}_F$ is a leaf-decorated merge tree. 
Recall that we denote an element of $\BCS$ as a multiset $\{(I,m_I)\}_{I \in \mathcal{I}}$, where $I$ is an interval and $m_I$ is its multiplicity. We make the (realistic, in practice) assumption that all barcodes are finite.

Inspired by a similar result for merge trees in \cite{gasparovic2019intrinsic}, we will now explain how the bottleneck distance between leaf decorated merge trees can be expressed in terms of matrices obtained from labelings of merge trees.

\begin{definition}
 A \define{labeling} of a merge tree $\cM_F$ is a map 
 \[
 \lambda_F: [n]:=\{1,\ldots,n\} \rightarrow \cM_F,
 \]
 for some integer $n \geq 1$, which is surjective onto the set of leaves $L(\cM_F)$; that is, $L(\cM_F) \subseteq \mathrm{Im}(\lambda_F)$.
\end{definition}
  
  To each labeling $\lambda_F: [n] \rightarrow \cM_F$ one associates an $n\times n$ matrix $\Lambda_F$ of merge heights (Definition~\ref{defn:LCA-merge-height}), referred to as the \emph{least common ancestor (LCA) matrix} and defined by
\begin{equation}\label{eqn:ultramatrix}
\Lambda_F(i,j) = \pi_F(\mathrm{LCA}_F(\lambda_F(i),\lambda_F(j))) = \mathrm{merge}_F(\lambda_F(i),\lambda_F(j)).
\end{equation}
It is shown in \cite[Proposition 4.1]{gasparovic2019intrinsic} that interleaving distance between merge trees can be expressed in terms of the $\ell_\infty$ distance between these LCA matrices. To adapt this result to the decorated merge tree setting, we introduce a more involved objective function.

\begin{definition}
Let $\mathcal{B}_F:\cM_F \to \BCS$ and $\mathcal{B}_G:\cM_G \to \BCS$ be leaf decorated merge trees. Define the \define{matching cost} of labelings $\lambda_F:[n] \to \cM_F$ and $\lambda_G:[n] \to \cM_G$ to be
\begin{equation}\label{eqn:DMT_matching_cost}
\mathrm{cost}(\lambda_F,\lambda_G) = \max \left\{
\left\| \Lambda_F - \Lambda_G \right\|_\infty, 
\max_i d_B\left(\mathcal{B}_F(\lambda_F(i)),\mathcal{B}_G(\lambda_G(i))\right)\right\},
\end{equation}
where $\|\cdot\|_\infty$ is the $\ell_\infty$-norm on $n \times n$ matrices.
\end{definition}

\begin{proposition}\label{prop:labeled_distance}
The decorated bottleneck distance between leaf decorated merge trees $\mathcal{B}_F:\cM_F \to \BCS$ and $\mathcal{B}_G:\cM_G \to \BCS$ is given by 
\[
d_B(\mathcal{B}_F,\mathcal{B}_G) = \min_{\lambda_F,\lambda_G} \mathrm{cost}(\lambda_F,\lambda_G),
\]
where the minimum is taken over labelings $\lambda_F$ and $\lambda_G$ with common domain $[n]$, where $n$ is the sum of the number of leaves in $\cM_F$ and the number of leaves in $\cM_G$.
\end{proposition}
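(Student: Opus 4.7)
The plan is to establish the equality by proving two inequalities. Both directions reduce to leaves: the $(\leq)$ direction via Proposition~\ref{prop::determinedleaves}, and the $(\geq)$ direction by direct construction of the labelings from an $\e$-matching. The backbone is the labeling-based reformulation of (undecorated) merge tree interleaving distance from~\cite[Prop.~4.1]{gasparovic2019intrinsic}, extended here to track the barcode decorations.

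For $d_B(\mathcal{B}_F,\mathcal{B}_G) \leq \min \mathrm{cost}$: given labelings $\lambda_F, \lambda_G$ with $\mathrm{cost}(\lambda_F,\lambda_G) = \e$, the LCA-matrix bound $\|\Lambda_F - \Lambda_G\|_\infty \leq \e$ invokes Gasparovic et al.'s construction to yield an $\e$-interleaving $\phi:\cM_F \to \cM_G$ and $\psi:\cM_G \to \cM_F$ with the property that $\phi(\lambda_F(i))$ equals the unique ancestor of $\lambda_G(i)$ at height $\pi_F(\lambda_F(i)) + \e$; this is well-defined because the diagonal LCA matrix entries force $\pi_G(\lambda_G(i)) \leq \pi_F(\lambda_F(i)) + \e$. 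By Proposition~\ref{prop::determinedleaves} it suffices to exhibit $\e$-matchings of $\mathcal{B}_F(v)$ with $\mathcal{B}_G(\phi(v))$ at each leaf $v \in L(\cM_F)$, and symmetrically at leaves of $\cM_G$. Picking $i$ with $\lambda_F(i) = v$, the barcode cost term yields an $\e$-matching of $\mathcal{B}_F(v)$ with $\mathcal{B}_G(\lambda_G(i))$. Since $\mathcal{B}_G(\phi(v))$ is obtained from $\mathcal{B}_G(\lambda_G(i))$ by truncating intervals at height $h := \pi_F(v) + \e$ (decorations being determined by restriction), I would compose the matching with this truncation and verify $\e$-cost bar-by-bar: matched $\cM_G$-bars born at or above $h$ are unchanged; those born below $h$ have their left endpoint lifted to $h$, which still lies within $\e$ of the matched $\cM_F$-bar's birth (which is $\geq \pi_F(v)$); and pairs whose $\cM_G$-bar is annihilated by truncation have their $\cM_F$-bar of length $\leq 2\e$, so they can be left unmatched without extra cost.

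For $d_B(\mathcal{B}_F,\mathcal{B}_G) \geq \min \mathrm{cost}$: by Proposition~\ref{prop:decorated_bottleneck_reformulation}, start from an $\e$-matching with interleaving $\phi, \psi$ and $\e$-matchings at every point. Enumerate $L(\cM_F) = \{v_1,\ldots,v_a\}$ and $L(\cM_G) = \{w_1,\ldots,w_b\}$ with $n = a+b$, and set
\[
    \lambda_F(k) = \begin{cases} v_k & 1 \leq k \leq a \\ \psi(w_{k-a}) & a < k \leq n \end{cases}, \qquad \lambda_G(k) = \begin{cases} \phi(v_k) & 1 \leq k \leq a \\ w_{k-a} & a < k \leq n. \end{cases}
\]
Both maps are surjective onto their respective leaf sets, so they are valid labelings. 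The barcode component of $\mathrm{cost}(\lambda_F,\lambda_G)$ is bounded by $\e$ directly from the $\e$-matching hypothesis at each pair $(\lambda_F(k), \lambda_G(k))$. The LCA-matrix component is bounded by $\e$ by the standard fact that an $\e$-interleaving of persistent sets preserves merge heights to within $\e$, namely $|\pi_F(\mathrm{LCA}_F(p,q)) - \pi_G(\mathrm{LCA}_G(\phi(p),\phi(q)))| \leq \e$ for any $p,q \in \cM_F$, with analogous bounds when mixing $\phi$- and $\psi$-images.

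The main obstacle I anticipate is the bar-by-bar bookkeeping in direction $(\leq)$, where one must carefully track the interaction between a bottleneck matching and the truncation operation carrying $\mathcal{B}_G(\lambda_G(i))$ to $\mathcal{B}_G(\phi(v))$. Obtaining a sharp $\e$ bound (rather than a loose $O(\e)$) depends on the precise height relation $\pi_G(\phi(\lambda_F(i))) = \pi_F(\lambda_F(i)) + \e$ provided by Gasparovic's construction; a more generic interleaving would push the truncation height out of alignment with the birth heights in the matched barcodes and destroy the sharpness. A secondary concern is justifying that the infimum over labelings is attained as a minimum, which follows from restricting to labelings whose values lie at the finitely many leaves and internal nodes of the tame merge trees.
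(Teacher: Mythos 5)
Your proposal is correct and follows the same route as the paper's proof sketch: both directions of the equality are established by passing between labelings and $\epsilon$-matchings, with the labelings-to-matchings direction using the ancestor construction of Gasparovic et al.\ (define $\phi(v)$ to be the ancestor of $\lambda_G(i)$ at height $\pi_F(v)+\epsilon$, well-definedness via the LCA bound) and the matchings-to-labelings direction flowing leaves of one tree into the other via $\phi$ and $\psi$. Where you go a bit further than the paper's sketch is in spelling out the truncation bookkeeping for the $(\leq)$ direction---your case analysis (birth lifted to $h=\pi_F(v)+\epsilon$ still within $\epsilon$ of the $\cM_F$-bar's birth since that birth lies in $[\pi_F(v),\pi_F(v)+2\epsilon)$; annihilated bars have $\cM_F$-partners of half-length $\leq\epsilon$) is precisely the content of Lemma~\ref{lem:truncation} from the appendix, which the paper invokes implicitly when it says ``one must then show \dots'' and ``the proof is completed by showing \dots''. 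You also explicitly route through Proposition~\ref{prop::determinedleaves} to reduce to leaves, which the paper leaves tacit; this is the right move for leaf-decorated merge trees. The only loose end is the attainment of the minimum: your justification (``values lie at finitely many leaves and internal nodes'') isn't quite right, since $\psi(w)$ need not sit at a node of $\cM_F$; attainment instead follows from the tameness hypothesis via the finitely many combinatorial placements of labels, on each of which the cost is piecewise-linear in the heights over a compact domain. This is a minor point that the paper's sketch also does not address.
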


The proof adapts the proof of \cite[Proposition 4.1]{gasparovic2019intrinsic} to this new setting. We provide a sketch of the idea and omit technical, but ultimately routine, details.

\begin{proof}[Proof Sketch.]
Throughout the proof sketch, fix leaf decorated merge trees $\mathcal{B}_F:\cM_F \to \BCS$ and $\mathcal{B}_G:\cM_G \to \BCS$. Suppose that $\cM_F$ has $k$ leaves and $\cM_G$ has $\ell$ leaves.

First assume that we have $\epsilon$-compatible maps $(\phi,\psi)$ of $\cM_F$ and $\cM_G$; recall from Definition \ref{defn:matchings_DMTs} that we slightly abuse notation and conflate the notion of natural transformation with that of $\epsilon$-compatible maps. Also assume that
\[
d_B(\mathcal{B}_F(u),\mathcal{B}_G(\phi(u)),d_B(\mathcal{B}_G(v),\mathcal{B}_F(\psi(v))) < \epsilon.
\]
We construct labelings $\lambda_F$ and $\lambda_G$ on the common set of labels $[k + \ell]$ as follows.

\begin{enumerate}
    \item Label each leaf of $\cM_F$ with a unique element of $\{1,\ldots,k\}$; this defines $\lambda_F$ on $[k]$. Each label $i$ is also assigned to $\cM_G$ as $\lambda_G(i) = \psi(\lambda_F(i))$.
    \item Similarly, for each $j \in \{k+1,\ldots,k+\ell\}$ define $\lambda_G(j)$ to be a unique leaf of $\cM_G$ and define $\lambda_F(j) = \psi(\lambda_G(j))$.
\end{enumerate}
One must then show that the labelings $\lambda_F$ and $\lambda_G$ defined above have the property that, for any $i,j \in [k +\ell]$,
\begin{equation}\label{eqn:labeling_claim_0}
    \left|\pi_F(\mathrm{LCA}_F(\lambda_F(i),\lambda_F(j))) - \pi_G(\mathrm{LCA}_G(\lambda_G(i),\lambda_G(j)))\right| < \epsilon
\end{equation}
and
\begin{equation}\label{eqn:labeling_claim_1}
d_B(\mathcal{B}_F(\lambda_F(i)),\mathcal{B}_G(\lambda_G(i))) < \epsilon. 
\end{equation}

Conversely, suppose that we have labelings $\lambda_F$ and $\lambda_G$ such that $\mathrm{cost}(\lambda_F,\lambda_G) < \epsilon$. For each $i \in [k + \ell]$ set $u_i = \lambda_F(i)$ and $v_i = \lambda_G(i)$.

\begin{enumerate}
    \item We first define the map $\phi:\cM_F \to \cM_G$. Let $x \in \cM_F$ and choose an arbitrary labeled point $u_i \in \cM_F$ with $u_i \leq x$ (always possible because all leaves of $\cM_F$ are labeled). Define $\phi(x) \in \cM_G$ to be the unique ancestor of $v_i$ such that $\pi_G(\phi(x)) = \pi_F(x) + \epsilon$.

    \item The map $\psi:\cM_G \rightarrow \cM_F$ is constructed similarly.
\end{enumerate}

We first note that the maps $\phi$ and $\psi$ are well defined. Indeed, if $u_i$ and $u_j$ are a pair of labeled points in $\cM_F$ with $u_i,u_j \leq x$, then $\mathrm{LCA}_F(u_i,u_j) \leq x$. This means that $\pi_G(\mathrm{LCA}_G(v_i,v_j)) \leq \pi_F(x) + \epsilon$, by our assumption on the cost of the labelings $\lambda_F$ and $\lambda_G$, so that $v_i$ and $v_j$ have the same ancestor at height $\pi_F(x) + \epsilon$. By the same reasoning, $\psi$ is well defined. The proof is completed by showing that the maps $(\phi,\psi)$ define an $\epsilon$-matching of barcode decorated merge trees. 
\end{proof}

\subsection{Approximating Merge Tree Interleaving via Gromov-Wasserstein Distance}

Computing interleaving distance between merge trees is known to be NP-Hard \cite{agarwal2018computing}. A dynamic programming approach to computing interleaving distance is taken in \cite{farahbakhsh2019fpt}, illustrating that the computation is at least fixed parameter tractable (roughly, precise upper bounds on interleaving distance are computable in time with polynomial growth in the number of the nodes, provided one has a uniform upper bound on node degrees). An alternative and more efficient method for analyzing merge trees is to apply  \cite[Proposition 4.1]{gasparovic2019intrinsic}: given any labeling of merge trees, one obtains an upper bound on their interleaving distance, so one could rely on a good heuristic for estimating optimal labelings. Such a heuristic, based on replacing the problem with the simpler one of finding optimal bipartite graph matchings, is proposed in \cite{yan2019structural}, yielding a comprehensive framework for statistical analysis of merge tree ensembles. 

In this section, we propose an alternative heuristic for estimating optimal labelings using the concept of Gromov-Wasserstein (GW) distances. Different formulations of GW distances were introduced around a decade ago by Sturm \cite{sturm2006geometry,sturm2012space}, to study abstract convergence of sequences of metric measure spaces, and M\'{e}moli \cite{dgh-sm,memoli2011gromov}, with a view towards applications to object matching. GW distances have found recent popularity in the machine learning community \cite{alvarez2019towards,chapel2020partial,xu2019scalable,xu2020learning}, largely due to the observation by Peyr\'{e} et al.\ \cite{peyre2016gromov} that they can be used to measure distances between general kernel matrices; this point of view was formalized mathematically in \cite{DBLP:journals/corr/abs-1808-04337}. We now briefly present GW distances from a computational point of view---see the references above more thorough treatments.

\begin{definition}[cf.\ \cite{DBLP:journals/corr/abs-1808-04337}]
A \define{(finite) measure network} is a pair $C = (C,\mu)$, where $C$ is an $n\times n$ matrix and $\mu$ is a length-$n$ probability vector (that is, $\sum \mu(i) = 1$ and $\mu(i) \geq 0$). A \define{coupling} between probability vectors $\mu_1 \in \mathbb{R}^n$ and $\mu_2 \in \mathbb{R}^m$ is a matrix $\nu \in \mathbb{R}^{n \times m}$ such that $\nu(i,j) \geq 0$ and 
\[
\sum_i \nu(i,j) = \mu_2(j), \qquad \sum_j \nu(i,j) = \mu_1(i).
\]
That is, $\nu$ is a joint probability distribution with marginals $\mu_1$ and $\mu_2$. The set of couplings between fixed $\mu_1$ and $\mu_2$ is denoted $\mathcal{C}(\mu_1,\mu_2) \subset \R^{n \times m}$. 

The \define{Gromov-Wasserstein $p$-distance} between  finite measure networks $(C_1,\mu_1)$ and $(C_2,\mu_2)$ is defined by
\begin{equation}\label{eqn:gw_p_distance}
d_{\mathrm{GW},p}\left((C_1,\mu_1),(C_2,\mu_2)\right) = \min_{\nu \in \mathcal{C}(\mu_1,\mu_2)} J_p(\nu)^{1/p},
\end{equation} 
where
\begin{equation}\label{eqn:gw_p_loss}
J_p(\nu) = \sum_{i,j,k,\ell} \left(C_1(i,k) - C_2(j,\ell)\right)^p \nu(i,j) \nu(k,\ell)
\end{equation}
denotes the \define{Gromov-Wasserstein $p$-distortion functional}. 
\end{definition}

To handle computational aspects of merge tree interleaving, we introduce the following discrete representation of a merge tree.

\begin{definition}[cf.\ \cite{gasparovic2019intrinsic}]
A \define{computational merge tree} is a (discrete) graph together with a height function on its nodes satisfying the following conditions:
\begin{enumerate}
    \item the graph must be a tree---that is, there is a unique edge path between any two nodes;
    \item exactly one of the leaf nodes (called the \define{root}) of the graph gets assigned height $\infty$;
    \item adjacent nodes do not have equal function value; and
    \item every non-root node has exactly one neighbor with higher function value.
\end{enumerate}
A \define{computational decorated merge tree} is a computational merge tree together with an assignment of a barcode to each of its nodes.
\end{definition}

\begin{remark}
In what follows, we use the notation $\cM_F$ for a computational merge tree and $\calB_F:\cM_F \to \BCS$ for a computational decorated merge tree. This is in agreement with the notation used for their non-computational counterparts earlier in the paper, but the distinction should be clear from context. We also generally drop the ``computational" qualifier.
\end{remark}

\begin{Algo}\label{algo:merge_tree_interleaving}
The GW framework is incorporated into the pipeline for estimating the interleaving distance between merge trees $\cM_F$ and $\cM_G$ as follows (see also the schematic of the pipeline in Figure \ref{fig:interleaving_pipeline}):

\begin{enumerate}
    \item Probability distributions $\mu_F$ and $\mu_G$, respectively, are chosen for the nodes of the trees. We use uniform distributions in our experiments
    \item An optimal coupling $\nu$ of $\mu_F$ and $\mu_G$ is estimated by numerically solving the GW problem \eqref{eqn:gw_p_distance}.
    Intuitively, the GW problem promotes large values of $\nu(i,j)$ when node $i$ of $\cM_F$ and node $j$ of $\cM_G$ are structurally similar.
    \item We use $\nu$ to estimate interleaving maps of $\cM_F$ and $\cM_G$: for each leaf $u$ in $\cM_F$, we locate the maximum entry of the row of $\nu$ corresponding to this leaf and define this to be $\phi(u) \in \cM_G$. We likewise define a map $\psi$ from the leaf set of $\cM_G$ to $\cM_F$ by examining columns of $\nu$.
    \item We use $\phi$ and $\psi$ to construct labelings $\lambda_F$ and $\lambda_G$ with domain $[k + \ell]$ of the merge trees as in the algorithm in the proof of Proposition \ref{prop:labeled_distance}, where $k$ is the number of leaves in $\cM_F$ and $\ell$ is the number of leaves in $\cM_G$.
    \item From $\lambda_F$ and $\lambda_G$, we construct matrices $\Lambda_F$ and $\Lambda_G$  and compute $\|\Lambda_F - \Lambda_G\|_\infty$, yielding a principled upper estimate of interleaving distance.
\end{enumerate}
\end{Algo}

\begin{figure}
    \centering
    \includegraphics[width = 0.8\textwidth]{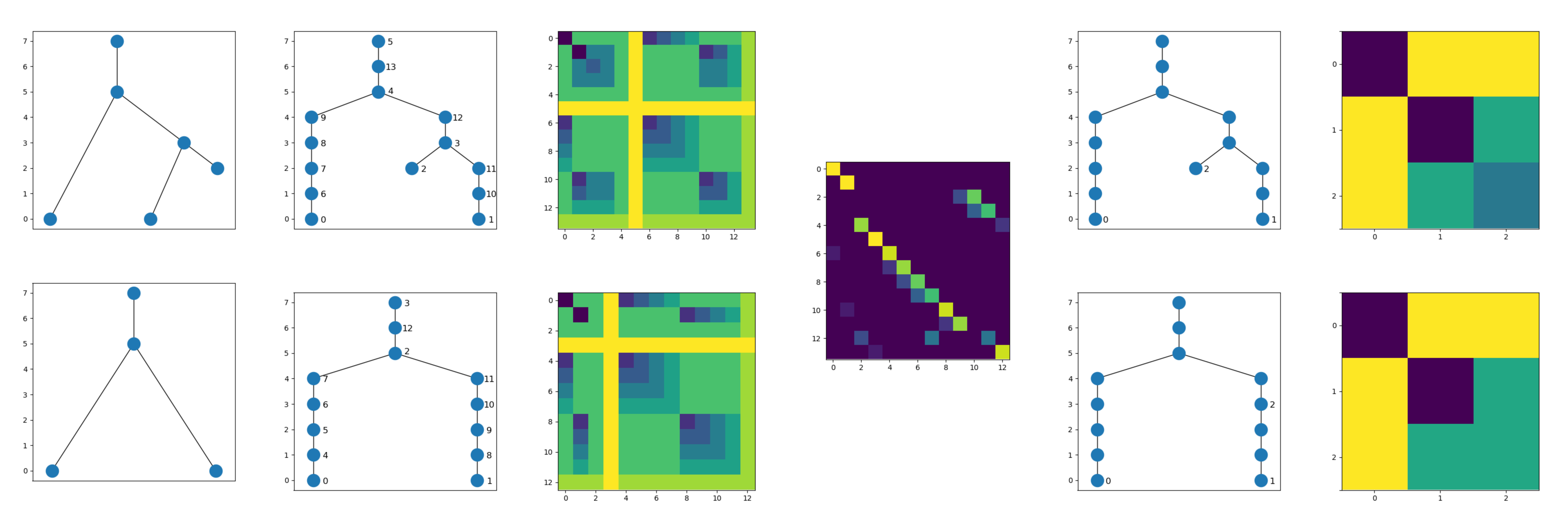}
    \caption{Pipeline for estimating interleaving distance between merge trees. From left to right: we aim to compute the interleaving distance between merge trees shown in the left column. We sample the merge trees with a user-defined mesh (here the mesh is equal to 1).
    We assign arbitrary labels to the nodes in each tree independently; LCA matrices for each merge tree (with respect to these labels) are shown in the third column. The fourth column shows an optimal coupling between the sampled nodes of the merge tree. From the coupling, we estimate an optimal labeling of the merge trees (fifth column), and this easily yields an upper estimate on the interleaving distance between the merge trees.}
    \label{fig:interleaving_pipeline}
\end{figure}

\begin{remark}
Computing GW distance is an instance of quadratic programming problem with a nonconvex objective function and is therefore NP-Hard to compute exactly \cite{memoli2011gromov}. However, since the space of couplings $\mathcal{C}(\mu_1,\mu_2)$ forms a convex polytope, it is possible to approximate GW distance via Frank-Wolfe-style projected gradient descent; our computations of GW distance will be handled by the Python Optimal Transport package \cite{flamary2017pot}. Gradient updates have computational complexity $O(n^3 \log(n))$ when $p=2$ \cite{peyre2016gromov}, so we focus on this case for the sake of efficiency.

The quality of the gradient descent-based estimation in step (2) of the algorithm can be improved by upsampling the trees---i.e., adding degree-$2$ nodes at user-specified heights to better approximate the continuous nature of the true merge trees. We have found empirically that this procedure for estimating interleaving distance from above tends to give meaningful labelings of merge trees. This is illustrated by an experiment on synthetic data in Section \ref{sec:merge_tree_interleaving_experiment}. 
\end{remark}

\begin{remark}\label{remark:connection_to_GW_methods}
Similar applications of Gromov-Wasserstein distance have recently appeared in the literature. In \cite{li2021sketching}, a framework for summarizing sets of merge trees is developed using the Riemannian structure of the GW metric on measure networks developed in \cite{chowdhury2020gromov}. There, the authors use GW distance to align merge trees, but the measure network representation of the merge trees used there is different than the one proposed here and has a less clear connection to \cite{gasparovic2019intrinsic} and the computation of interleaving distance. The authors of \cite{memoli2021ultrametric} study a variant of GW distance on the space of \emph{ultrametric spaces} (metric spaces which satisfy a stronger version of the triangle inequality). Applications to merge tree interleaving are not directly discussed in \cite{memoli2021ultrametric}, but our matrix representations of merge trees are inherently utilizing an underlying ultrametric structure (or, using the terminology of \cite{memoli2021ultrametric}, ultradissimilarity structure), which is not the usual geodesic metric one considers when representing a merge tree as a metric tree. Similar ideas for comparing metric spaces go back to \cite{smith2016hierarchical} and \cite{memoli2019gromov} (the latter describing specific connections to merge tree interleavings), but to our knowledge the specific algorithm and implementation described here for approximating merge tree interleaving is novel.
\end{remark}

\subsection{Matchings of DMTs via Fused Gromov-Wasserstein Matchings}\label{sec:fused_gromov_wasserstein} 

The GW framework described above has thus far only been applied to estimate interleavings between (undecorated) merge trees. In order to incorporate barcodes into comparisons between \emph{decorated} merge trees, we employ the more general Fused Gromov-Wasserstein (FGW) framework~\cite{vayer2020fused}. The FGW framework is used to compare measure networks with additional structure. 

\begin{definition}
Let $Z = (Z,d_Z)$ be a metric space. A \define{$Z$-structured (finite) measure network} is a triple $(C,\mu,B)$ consisting of a finite measure network $(C,\mu)$, where $C \in \R^{n \times n}$, together with an $n$-tuple $B$ of points in $Z$. 

Let $(C_1,\mu_1,B_1)$ and $(C_2,\mu_2,B_2)$ be $Z$-structured measure networks. For $\alpha \in [0,1]$, we define the \define{Fused Gromov-Wasserstein (FGW) distance} for parameter $\zeta \in [0,1]$ as
\begin{equation}\label{eqn:FGW}
d_{\mathrm{FGW},\zeta}((C_1,\mu_1,B_1),(C_2,\mu_2,B_2))^2 = \min_{\nu \in \mathcal{C}(\mu,\nu)} \left((1-\zeta) I_2(\nu) + \zeta J_2(\nu)\right),
\end{equation}
where $J_2$ is the GW $p$-loss \eqref{eqn:gw_p_loss} with $p=2$ and 
\[
I_2(\nu) = \sum_{i,j} d_Z(B_1(i),B_2(j))^2 \nu(i,j)
\]
is the standard $2$-Wasserstein loss from classical optimal transport (see, e.g., \cite{villani2008optimal}).
\end{definition}

The FGW framework is used to augment Algorithm \ref{algo:merge_tree_interleaving} to estimate matching distance between DMTs $\calB_F:\cM_F \to \BCS$ and $\calB_G:\cM_G \to \BCS$. The difference is that the optimal coupling of step (2) is obtained by solving \eqref{eqn:FGW} for some choice of hyperparameter $\zeta$, where the functional $J_2$ is the GW $2$-distortion functional with respect to the measure networks $(\bar{\Lambda}_F,\mu_F)$ and $(\bar{\Lambda}_G,\mu_G)$, exactly as in the previous subsection. The $I_2$ term is Wasserstein $2$-loss with respect to bottleneck distance between node barcodes. Examples of FGW-based estimation of DMT matching distance are provided in Sections  \ref{sec:clustering_point_clouds} and \ref{sec:network_matching}.

\section{Algorithmic Details and Examples}\label{sec:examples}

In this section, we outline theoretical and practical aspects of generating and comparing decorated merge trees. 
We provide several computational examples coming from real and synthetic data. 
Implementations (in Python) of all of these experiments as well as source code are freely available in our GitHub repository\footnote{https://github.com/trneedham/Decorated-Merge-Trees}. 
The code uses standard Python data science packages (\texttt{scikit-learn} \cite{scikit-learn}, \texttt{scipy} \cite{2020SciPy-NMeth}, etc.), as well as more specialized packages for topological data analysis (\texttt{gudhi} \cite{gudhi:urm}, \texttt{giotto-tda} \cite{tauzin2020giottotda}, \texttt{ripser} \cite{1908.02518} and \texttt{scikit-tda} \cite{scikittda2019}) and optimal transport (Python Optimal Transport, \texttt{pot} \cite{flamary2017pot}). 

\subsection{Merge Tree Interleaving Distances}\label{sec:merge_tree_interleaving_experiment}

To test the reliability of our method for estimating merge tree interleaving distances, we create a simple classification experiment (summarized in Figure \ref{fig:scalar_classification}). We define a parametric model for generating merge trees by considering merge trees $\mathcal{M}_F$ given by sublevel-set filtrations $F$ of functions $f:[0,1] \rightarrow \R$ given by
\begin{equation}\label{eqn:merge_tree_model}
f(t) = \sin(\rho_1 \pi t) + \cos(\rho_2 \pi t) + g(t),
\end{equation}
where $\rho_j$ are parameters and $g(t) \in [0,1/2]$ is uniformly distributed random additive noise. We construct a dataset of 120 random merge trees by drawing 10 samples for each combination of parameter choices $\rho_1 \in \{1,2,4,8\}$ and $\rho_2 \in \{1,3,5\}$. The random additive noise has the effect of generating many spurious local minima for the functions which yield extraneous leaves in the merge trees, making the matching process nontrivial.
We compute a $120 \times 120$ pairwise distance matrix using our interleaving distance estimator with uniform node weights and height sampling mesh size equal to $0.5$. The leave-one-out nearest neighbor classification rate for this distance matrix is 98.33\%, indicating that our interleaving distance computation provides a meaningful comparison for the merge tree dataset. 

Since the labelings in the GW-based interleaving computation are obtained via optimization of an $\ell_2$ loss, we also give a small tweak to the interleaving distance formulation and compute the \define{$\ell_2$-interleaving distance} between merge trees $\cM_F$ and $\cM_G$, defined to be 
\[
\min_{\lambda_F,\lambda_G} \|\Lambda_F - \Lambda_G\|_2,
\]
where the minimum is, as usual, over labelings of the merge trees and $\|\cdot\|_2$ is the standard $\ell_2$ norm on matrices. The classification rate for this distance is 100\%. Exploring theoretical properties of $\ell_p$ versions of merge tree interleaving distance will be a direction of future work.

\begin{figure}
    \centering
    \includegraphics[width = 0.8\textwidth]{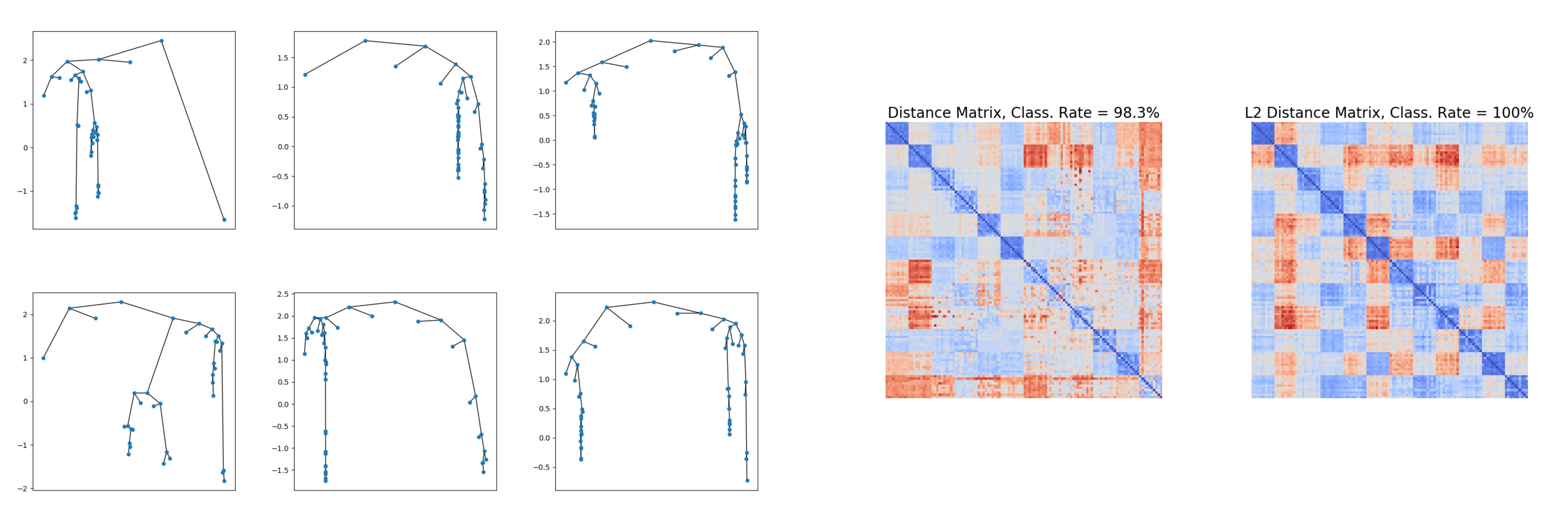}
    \caption{Merge tree classification experiment. The merge trees shown on the left are samples from 6 different classes generated from the family of scalar functions \eqref{eqn:merge_tree_model}. Pairwise distance matrices and leave-one-out nearest neighbor classification scores for $\ell_\infty$ and $\ell_2$ interleaving distances are shown on the right.}
    \label{fig:scalar_classification}
\end{figure}

\subsection{Decorated Merge Trees for Point Clouds}\label{sec:point_clouds_computation}

Vietoris-Rips complexes of point clouds in metric spaces provide a ubiquitous source for persistence diagrams in topological data analysis. In this subsection we provide details on the construction of decorated merge trees and on the methods used to visualize them. These constructions fit into the general theory by considering the DMTs as being generated by a filtered simplicial complex, viewed as a persistent space as in Example \ref{ex:sublevel_set_simplicial}. Throughout the rest of this section, we will use the term DMT to refer specifically to tame leaf decorated merge trees (Definitions \ref{defn:barcode_decorated_merge_tree} and \ref{defn:constructible-persistent-set}).

Let $X$ be a finite subset of a metric space (in our examples, the metric space is $\R^n$, but the procedure we describe here can be performed on any distance matrix). From the Vietoris-Rips complex of $X$, we are easily able to compute a (discrete representation of a) merge tree $\cM_F$ ($F$ denoting the Vietoris-Rips filtration for the complete simplex on $X$)---i.e., a single linkage hierarchical clustering dendrogram of $X$ \cite{carlsson2013classifying}---via off-the-shelf functions available in standard packages (e.g., \texttt{scipy}). 

It remains to determine barcodes for the leaves of $\cM_F$. A straightforward algorithm produces a lift decorated merge tree, as was considered in Section \ref{sec:lift_decorated}. The inputs are the tree $\cM_F$ and a degree-$k$ Vietoris-Rips barcode, generated by existing software (\texttt{ripser} or \texttt{gudhi}). For convenience, we assume the generic condition that all pairwise distances in $X$ are distinct, but this condition can be removed with a bit more work. The basic idea of the algorithm is that we can \emph{decorate} the merge tree with the bars of the barcode by locating the unique (under our generic assumption on the distances) point in the tree where each bar in the barcode is born. Recall from Section \ref{sec:lift_decorated} that the lift decorated merge tree is not necessarily isomorphic to the true decorated merge tree---this will be the case if and only if the true DMT is real interval decomposable (Definition \ref{def:real_interval_decomposable}). Fortunately, Proposition \ref{prop:disjointness} gives a simple method for certifying correctness of the lift decorated merge tree. If a given merge tree does not meet this certification, the true leaf decorations of the merge tree for the dataset can be computed via a more laborious algorithm, where the idea  is to construct a modification of the Vietoris-Rips complex for each leaf and to then assign the associated persistent homology barcode to that leaf. This is guaranteed to produce the correct leaf barcodes, but comes with higher computational cost and less intuitive visualizations.

To visualize the lift decorated merge trees, we draw the merge tree together with offset edges indicating homology bars. Examples of decorated merge trees generated via this algorithm are shown in Figure \ref{fig:DMT_first_example}. We can also visualize merge trees decorated with bars from several higher homology degrees---see Figure \ref{fig:multiple_degrees}. Our visualizations employ some tricks to improve legibility:
\begin{enumerate}
    \item on the DMT, we typically only plot bars from the barcode which have persistence longer than a (tunable, user-defined) threshold;
    \item we truncate the merge tree itself at a relatively low (tunable, user-defined) height and then extend the truncated edges to zero.
\end{enumerate}
Both of these options are used to simplify the visualizations by removing unstable, noisy topological features (transient higher-dimensional features and changes in connectivity at small radii, respectively). Moreover, the simplified DMTs are frequently used in matching distance computations---since this is an $\ell_\infty$-type distance, it is stable under these small adjustments, but the simplifications provide a significant computational speedup.

\begin{figure}
    \centering
    \includegraphics[width = 0.8\textwidth]{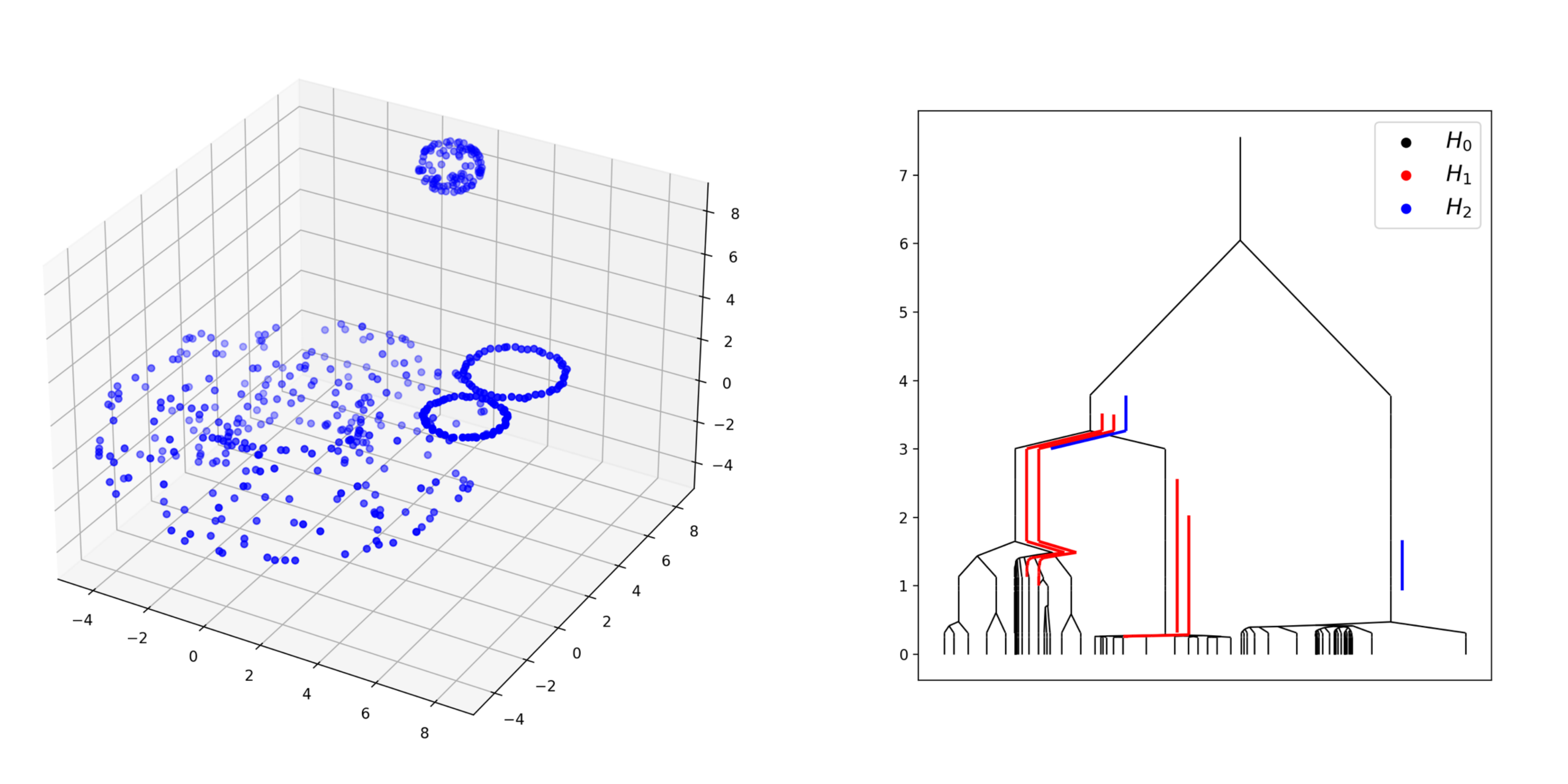}
    \caption{Summarizing the topology in all degrees. The figure on the left shows a toy point cloud dataset consisting of points sampled from a torus, a sphere and a figure-8. The figure on the right shows the associated merge tree decorated with both degree-1 and degree-2 persistent homology data (the merge tree and the persistence diagrams have been thresholded for visual clarity---see text). This summarizes the topology of the dataset in all degrees simultaneously, essentially giving a complete description of its overall topology.}
    \label{fig:multiple_degrees}
\end{figure}

Figure \ref{fig:indecomposable_example} shows an example of a pointcloud where the hypotheses of Proposition \ref{prop:disjointness} (i.e. $H_1$-disjointness) fail. In fact, the true DMT for this dataset is not real interval decomposable and the true leaf barcodes are determined by the more computationally taxing algorithm. In this case, a different technique for visualizing decorated merge trees must be used---see the caption to the figure for details. We found in practice that almost all of our other  examples naturally satisfied the hypotheses of Proposition \ref{prop:disjointness}.

\begin{figure}
    \centering
    \includegraphics[width = 0.8\textwidth]{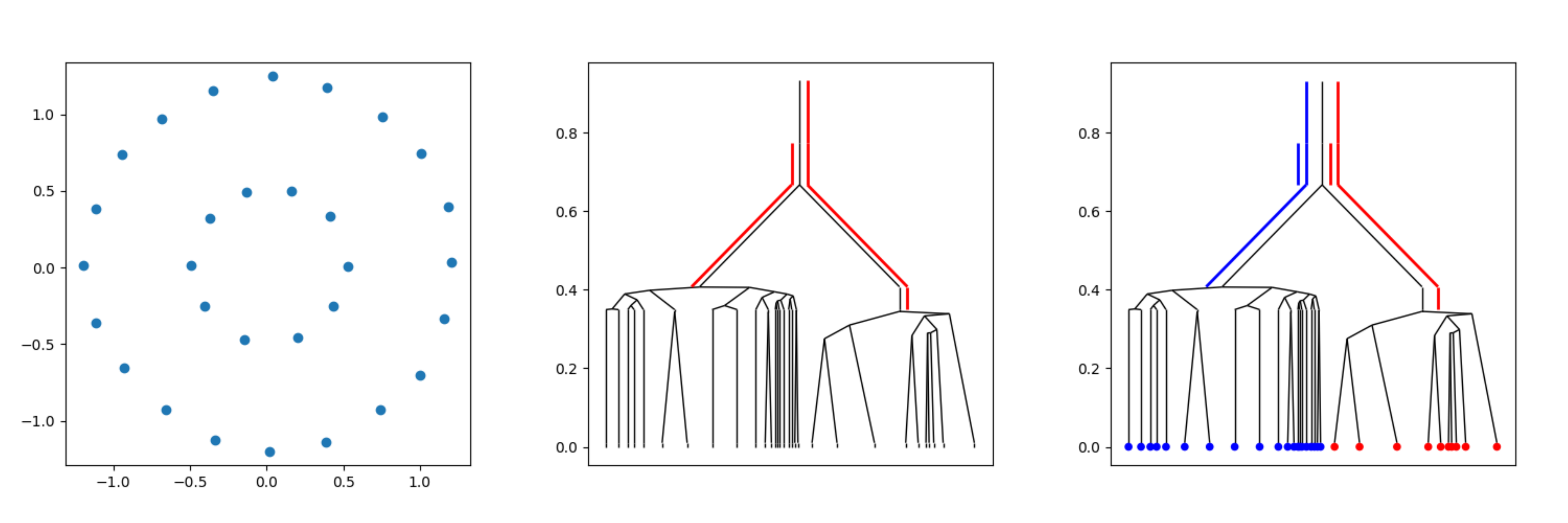}
    \caption{A point cloud with non-real interval decomposable merge tree. The figure on the left shows a dataset in $\R^2$. The DMT shown in the middle is the one produced by the lifting algorithm---observe that this does not satisfy the hypotheses of Proposition \ref{prop:disjointness}. In fact, the output of the more involved algorithm disagrees with the lifted version, meaning that the true DMT for the point cloud is not real interval decomposable. The true DMT is depicted in on the right. In the non-real interval decomposable case, we must visualize the leaf barcode for each leaf independently. All leaves colored blue (respectively, red) have the same barcode, also depicted in blue (resp., red).}
    \label{fig:indecomposable_example}
\end{figure}

\subsection{Sliding Window Embeddings} Established in \cite{perea2015sliding}, sliding window embeddings provide a method for applying the techniques of point cloud TDA to 1-dimensional signals. Given a function $f:[0,T] \to \R$, one defines for each pair of parameters $d \in \mathbb{N}$ and $\tau \in (0,T/d)$ the \define{sliding window embedding} (also referred to as the \define{Takens embedding}, in reference to Takens's theorem from dynamical system theory \cite{takens1981detecting}) as
\begin{align*}
    \mathrm{SW}_{d,\tau}(f):[0,T-d\tau] &\to \R^{d + 1} \\
    t &\mapsto (f(t), f(t + \tau), f(t + 2\tau), \cdots, f(t + d\tau)).
\end{align*}
The sliding window embedding assigns a $(d+1)$-dimensional Euclidean point cloud to each collection of finite samples of $f$. Methods of TDA can then be applied to this point cloud; this approach to signal analysis has found success in applications such as wheeze detection in audio recordings of breathing \cite{emrani2014persistent} and action recognition from scalar measurements of joint movement \cite{venkataraman2016persistent}.

When performing this sort of analysis, the focus is typically on degree-1 homological features, which indicate periodicity in the signal. We hypothesize that there may also be interesting degree-0 features in the case that the signal includes a sudden shift. We illustrate this behavior in the examples shown in Figure \ref{fig:takens_embedding}. Given a signal with an apparent shift, we:
\begin{enumerate}
    \item construct a point cloud via a sliding window embedding; parameters $d$ and $\tau$ are chosen automatically via statistical tools in the \texttt{giotto-tda} package;
    \item subsample the resulting point cloud by density---this has the effect of accentuating disconnected clusters, which correspond to the pre- and post-shift regimes in the original signal;
    \item create a decorated merge tree from the result.
\end{enumerate}
The examples in Figure \ref{fig:takens_embedding} (on both synthetic and real time series data) show that the DMTs resulting from this process uncover interesting features of the signals. 

From a real interval decomposable DMT, we obtain a simple method for locating the connected component of a dataset which contains a given higher degree homology cycle. For an interval $I$ in the higher degree barcode of the dataset, the points belonging to the connected component of the homology cycle are those corresponding to descendent leaves of the birth point of $I$ in the merge tree. Using our cycle component location procedure, we can determine portions of the signals which generate various degree-1 homology classes---these are indicated by color coding in Figure \ref{fig:takens_embedding}.

\begin{figure}
    \centering
    \includegraphics[width = 0.8\textwidth]{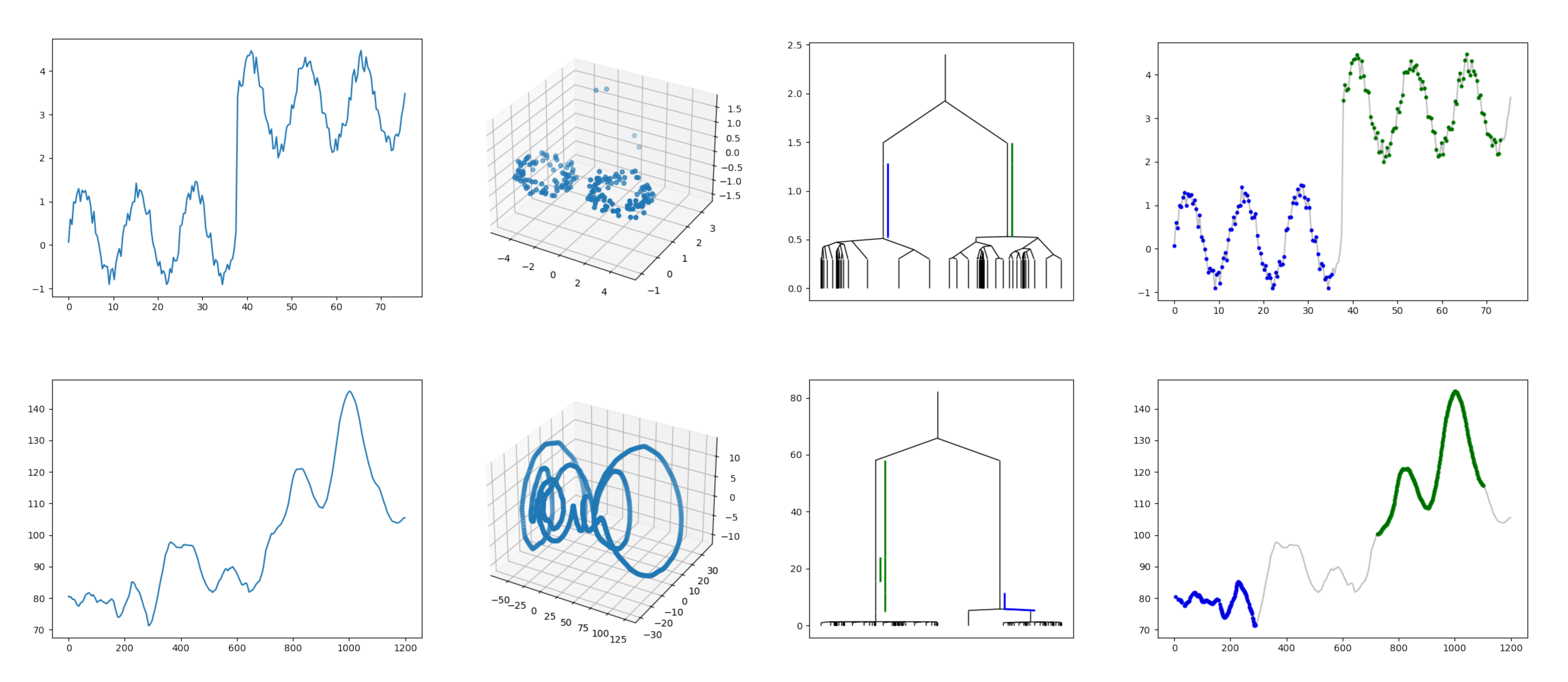}
    \caption{Decorated Merge Trees from Takens embeddings. Each row shows a different experiment. Starting with a time series (left), we produce an embedded point cloud via a Takens embedding (parameters determined automatically). The figure second from the left shows a PCA projection of the embedded point cloud. From the embedded point cloud, we produce a decorated merge tree (second from right) after light preprocessing via density-based subsampling in order to remove outliers (see text). Using the method described in the text, we are able to locate samples in the original time series which correspond to a point in the connected component where any particular degree-1 bar is born (shown on the right, color-coded to the bars on the DMT). The top row shows a synthetic time series and the bottom row shows a real world time series (heartrate data from \cite{reiss2019deep}). The heartrate data shows component-specific periodic behavior at the beginning and at the end of the time series, with the middle part apparently filtered out as a `transitionary phase' when subsampling.}
    \label{fig:takens_embedding}
\end{figure}

\subsection{Clustering Point Clouds}\label{sec:clustering_point_clouds}

In this example, we demonstrate the ability of DMT matching distance to distinguish point clouds with subtle topological differences. Figure \ref{fig:pointcloudsA} shows samples from six classes of synthetically-generated point clouds consisting of noisy blobs and circles. Each class contains 3 examples, with examples within class differing only due to noise effects. All classes have similar degree-$0$ topological structure, classes in the top row have three main degree-$1$ features and classes in the bottom row have two main degree-$1$ features. The distribution of the cycles among the connected components differ from class-to-class, but are the same within class.

For each point cloud, we construct various topological descriptors: a degree-$0$ persistence diagram, a degree-$1$ persistence diagram and a decorated merge tree. Pairwise distance matrices are computed for all 18 samples with respect to various metrics: bottleneck distance on degree-$0$ features, bottleneck distance on degree-$1$ features, maximum of bottleneck distance between degree-$0$ and degree-$1$ features and an estimation of decorated merge tree interleaving distance computed via the Fused Gromov-Wasserstein algorithm of Section \ref{sec:fused_gromov_wasserstein}. The resulting matrices and corresponding MDS plots are shown in Figure \ref{fig:pointcloudsB}.

\begin{figure}[h!]
    \centering
     \begin{subfigure}{.49\textwidth}
     \vspace{.08in}
        \includegraphics[width=.99\linewidth]{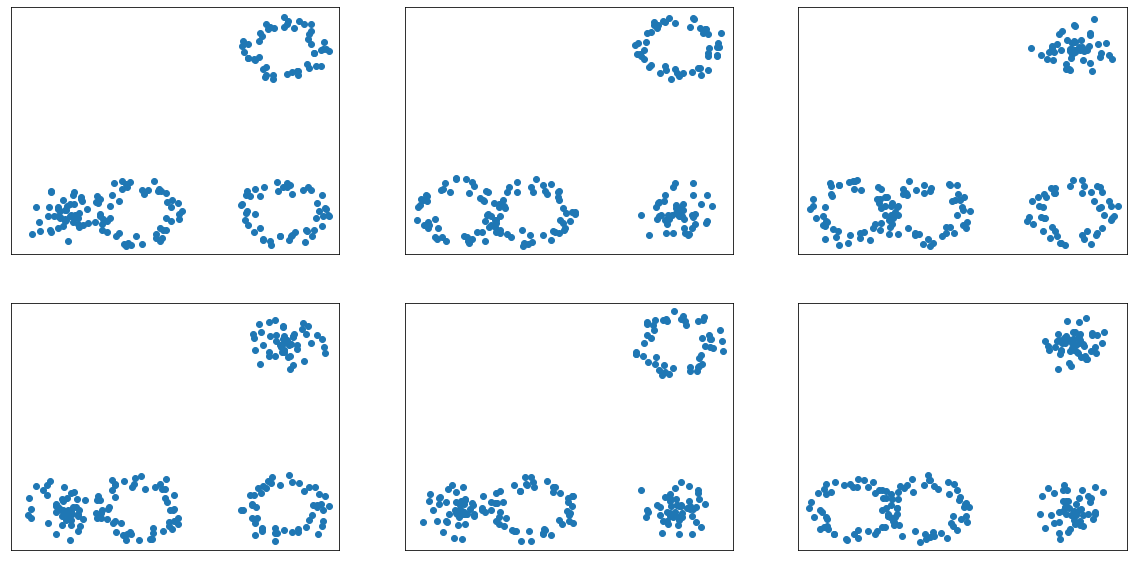}
      \caption{}
      \label{fig:pointcloudsA}
    \end{subfigure}%
    \centering
     \begin{subfigure}{.49\textwidth}
        \includegraphics[width=.99\linewidth]{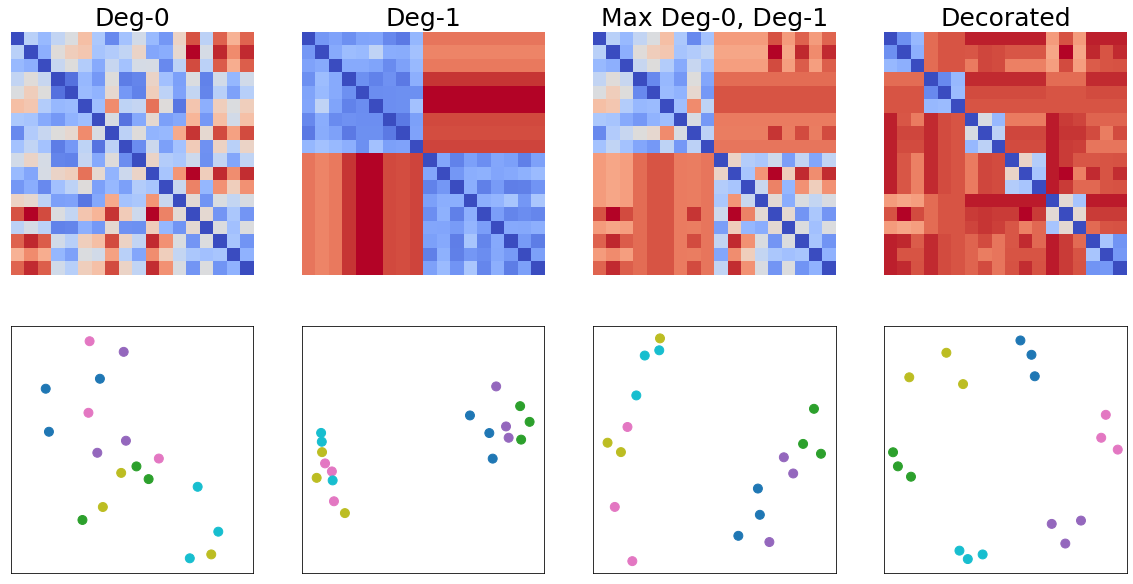}
      \caption{}
      \label{fig:pointcloudsB}
    \end{subfigure}%
    \caption{ {\bf (a)} Samples from six classes of simple pointclouds in $\R^2$. Each class contains 3 examples of pointclouds with similar topological patterns.  {\bf (b)} Pairwise distance matrices for various methods shown in the top row, with corresponding MDS plots shown in the bottom row.}
     \label{fig:point_cloud_classification}
 \end{figure}

\subsection{Networks}\label{sec:networks}

Beyond Vietoris-Rips complexes of point clouds, we can generate DMTs for more general filtered simplicial complexes. A class of such objects which is important from a data science perspective is the class of filtered networks. To make this precise, a \define{filtered network} is a graph $G = (V,E)$ together with a function $f:V \to \R$. This function is extended to each edge $\{v,w\} \in E$ by the rule
\[
f(\{v,w\}) := \max\{f(v),f(w)\},
\]
and this yields a sublevel-set filtration in the sense of Example \ref{ex:sublevel_set_simplicial}. We found it useful to add 2-dimensional simplices to the graphs by computing the 2-skeleton of the Vietoris-Rips complex with respect to shortest path distance; this has the effect of allowing degree-1 bars to die at finite times which reflect the size of their representative cycles. We are also able to threshold out small bars corresponding to triangles in the networks when displaying DMTs. This method is admittedly ad hoc, but it  results in more informative topological summaries which capture both the topology of the filtration and the distance structure of the graph.

Given a filtered network, we produce a DMT by first building the merge tree itself by iteratively adding nodes to the tree in order of function value. Degree-1 homological information can be added to the merge tree by lifting bars (guaranteed to produce the correct DMT if the disjointness condition (Definition \ref{def:disjointness}) is satisfied) or by a more involved construction, as in the case of point clouds.

An important source of network data comes from images represented as node-weighted graphs. Figure \ref{fig:imageNetworkPipeline} shows a decorated merge tree describing the topological features of real fMRI brain scan image data. This is accomplished by considering an image as a weighted grid graph; that is, each pixel of the image is a node, nodes are connected in a grid to their neighbors and weights are assigned according to grayscale value. This procedure is also summarized in the figure.

\begin{figure}
    \centering
    \includegraphics[width = 0.8\textwidth]{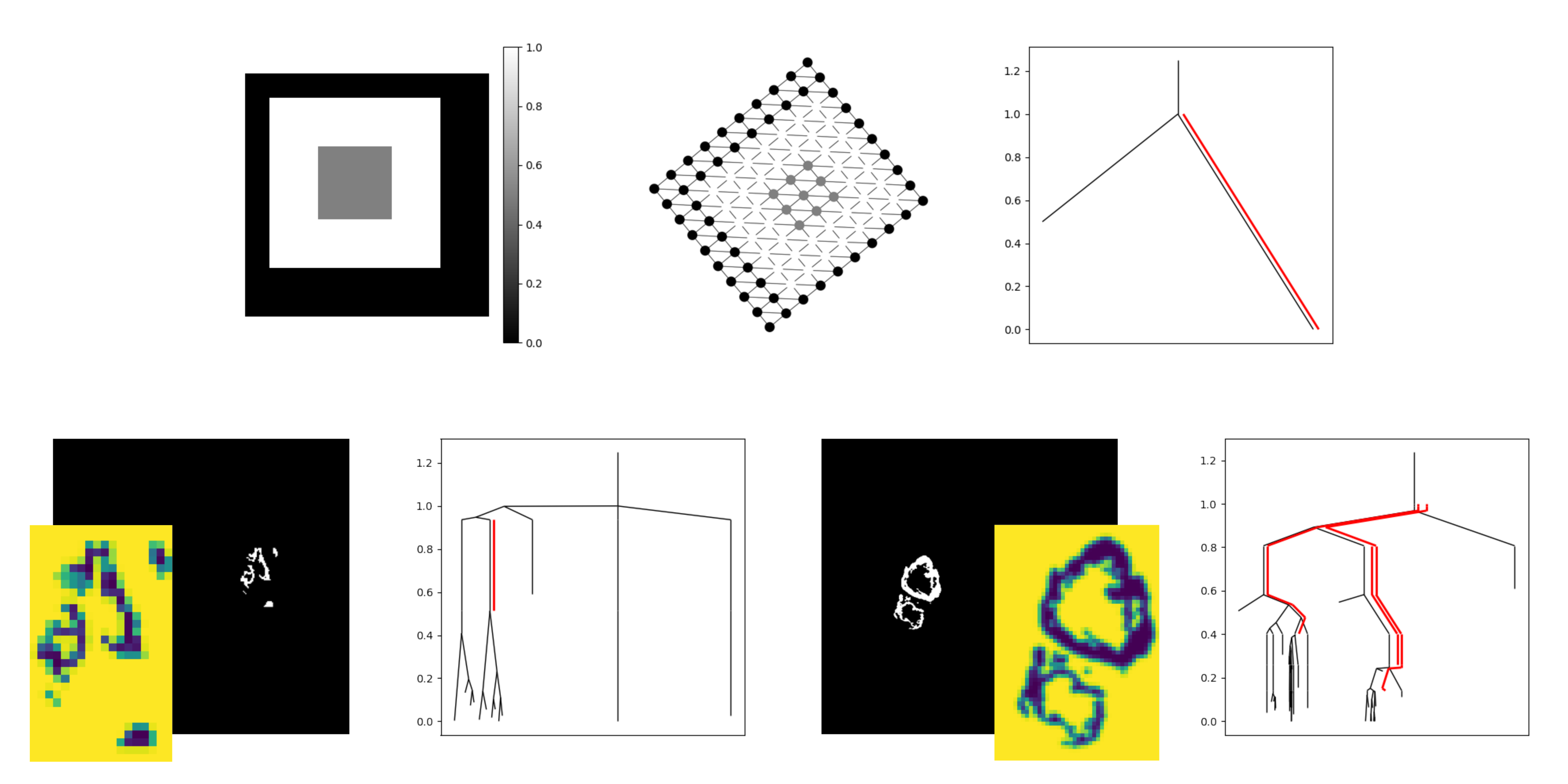}
    \caption{Grayscale images as a filtered networks. The {\bf top row} illustrates the pipeline for extracting a decorated merge tee from a grayscale image. Beginning with a toy image on the left, we convert it to a filtered network (a regular graph with nodes weighted by grayscale value). From this, we produce a decorated merge tree (right). The {\bf bottom row} shows two examples of decorated merge trees extracted from  MRI images of Glioblastoma Multiforme tumors from The Cancer Imaging Archive \cite{clark2013cancer,scarpace2016radiology}, with the segmentations coming from \cite{crawford2020predicting}. Each example shows the original image, with an inset showing a more detailed and lightly preprocessed version of the region of interest.}
    \label{fig:imageNetworkPipeline}
\end{figure}

\subsection{Network Matching}\label{sec:network_matching}

Computation of the decorated merge tree matching distance provides rich information about correspondences between points in the merge trees, since it involves the computation of an optimal coupling between these points. This is especially informative in the setting of filtered networks, since each node in the merge tree produced via our algorithm corresponds to a node in the original network. The optimal coupling then provides a probabilistic matching between nodes of the filtered networks which captures the topology of the filtration function. 

Estimation of node correspondences between graphs is a classical problem which has recently been a focus of Gromov-Wasserstein-based methods. The optimal coupling used to match two graphs depends on the how they are represented as measure networks in the GW matching problem \eqref{eqn:gw_p_distance}; for example a graph can be represented by its geodesic distance matrix \cite{hendrikson2016using}, its adjacency matrix \cite{xu2019scalable} or by a heat kernel matrix at a chosen scale \cite{chowdhury2020generalized}. The coupling produced in the process of computing the decorated merge tree matching distance differs from existing approaches in that it matches based on purely topological features of the graphs. A simple example illustrating the qualitative differences in graph matchings is shown in Figure \ref{fig:matching_networks}. In this example, the node filtration for each graph is given by a diffusion Fr\'{e}chet function, which measures local node density \cite{martinez2019probing}. Learning the best filtration function for a given task in the context of classical TDA is an active field of research \cite{hajij2020graph,hofer2020graph}; extending this line of research to learn filtration functions which capture the interplay between degree-0 and degree-1 features will be taken up in future work.

\begin{figure}
    \centering
    \includegraphics[width = 0.8\textwidth]{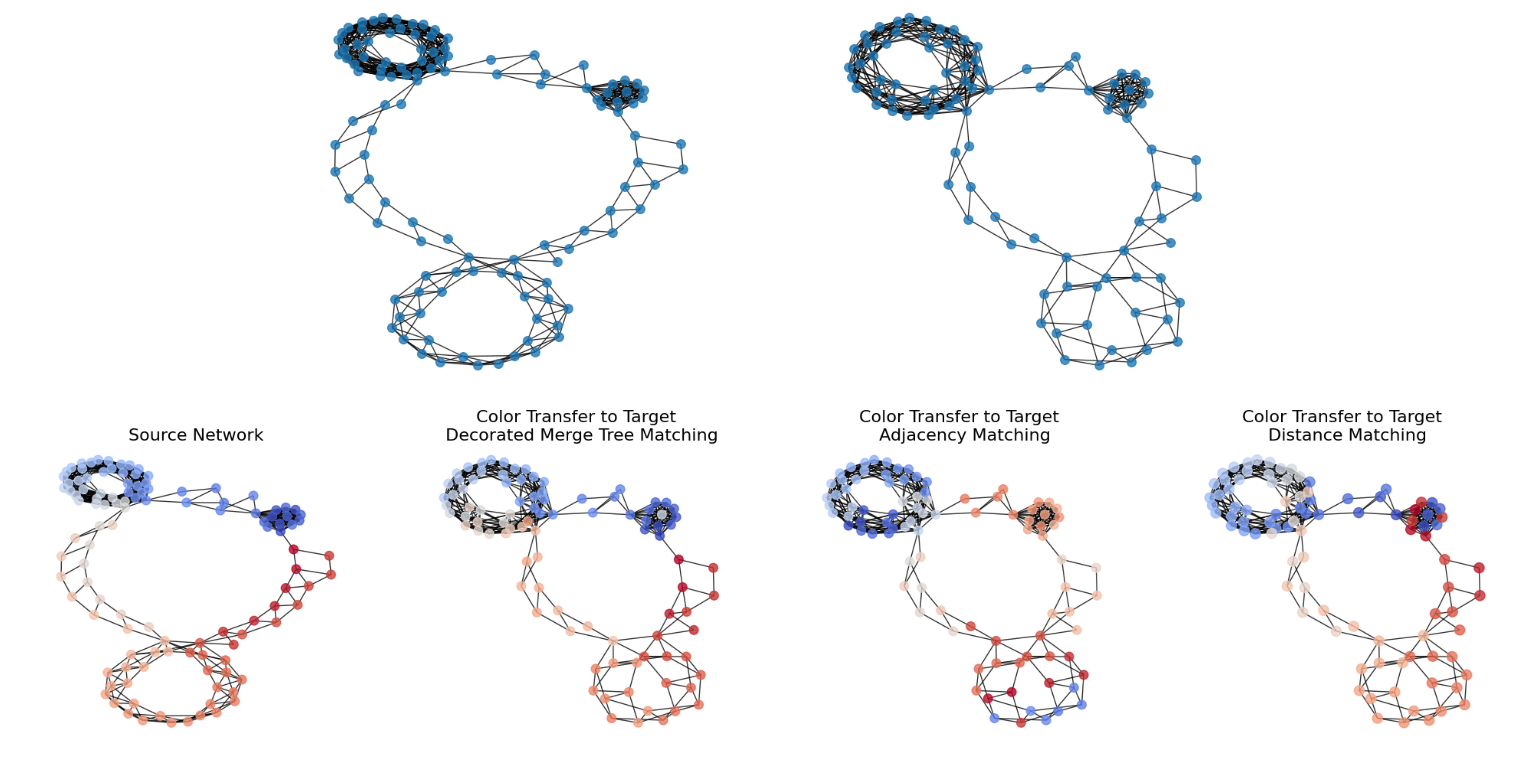}
    \caption{Matching networks. The top row shows two networks, constructed to have similar topological motifs. The bottom row illustrates node matchings between the graph; each matching is illustrated by transferring the node colors from the source graph (the node colors here are arbitrary) to the target graph via a probabilistic coupling of its nodes---node $j$ in the target graph receives the color of  target node $i$ when the $(i,j)$ entry is the largest in the $j$th column of the coupling matrix. The first color transfer utilizes the coupling obtained during the computation of DMT matching distance via the Fused GW framework, while the latter two couplings come from the GW framework applied to adjacency matrices and geodesic distance matrices, respectively. Qualitatively, the matching produced using DMTs most strongly preserves topological features.}
    \label{fig:matching_networks}
\end{figure}

\section{Discussion}\label{sec:discussion}

In this paper, we defined several variants of the notion of a decorated merge tree. We introduced a metric on the space of DMTs as well as a metric on the simpler space of leaf decorated merge trees which is more amenable to computation. A stability result was demonstrated for these metrics, and stability was extended to compare these metrics to several other metrics which have appeared in the literature. Several use cases for the DMT framework were demonstrated via computational examples. 

There are many directions for future research on DMTs and related ideas. 
On the computational side, we plan to refine our algorithms to efficiently handle DMT computations on real  datasets. 
This challenge comes with some interesting theoretical questions. For example, we are currently using off-the-shelf methods for approximating Gromov-Wasserstein matchings between merge trees; the hierarchical structure of merge trees should allow for specialized algorithms which utilize this structure to produce faster and more accurate estimates. 
To scale DMT-based analyses to handle large datasets in a modern machine learning setting, we also plan to explore principled vectorizations of these signatures. 

There are also several directions for research on the theoretical side. 
Two natural ways to generalize the concept of a decorated merge tree are to vary the object being decorated (the merge tree) and to vary the type of decoration (persistence barcodes). 
In future work, we plan to extend these ideas to treat decorated Reeb graphs. 
We also plan to explore decorations by zig-zag persistence modules and extended persistence modules.

Furthermore, Remark~\ref{rmk:universality} suggests that perhaps the interleaving distance on categorical decorated merge trees is universal~\cite{lesnick2015theory}.
One avenue for studying this result could leverage an observation of Gunnar Carlsson: that the cup product structure on cohomology determines $\pi_0$ information. Perhaps a universality result that preserves the richer algebraic structure of cup productsis the right framework for framing this result.
Finally, one might take up the question of how to define ``higher'' decorated merge trees---via cochains, for example~\cite{Mandell-Cochains}---that might offer a complete homotopical invariant of persistent spaces.
Repeating much of the standard template of results in algebraic topology from the past hundred years for persistent spaces is an active and interesting line of research.

\printbibliography

\appendix

\section{Interval Topology}\label{sec:interval-topology}
Let $X$ be a compact topological space and $f: X \to \R$ be a continuous function. Let $E_f$ be the epigraph of $f: X \to \R$, $\phi: E_f \to \R$ be the projection map, and  $\pi: E_f \to T_f X$ be the Reeb quotient map, which identifies two points if they are in the same connected component of a level set.

Define a partial order on $T_f X$ by letting $p \leq q$ if there exists $x$ in $X$ and $r,s$ in $\R$ such that $f(x) \leq r \leq s$ and $p=\pi(x,r)$ and $q=\pi(x,s)$. The fact that this defines a poset follows from the following lemma:

\begin{lemma}\label{lem:poset_characterization}
Let $(x,r)$ and $(y,s)$ be points in $E_f$ such that $r \leq s$. The following are equivalent:
\begin{enumerate}[i)]
    \item $\pi(x,r) \leq \pi(y,s)$.
    \item $\pi(x,s) = \pi(y,s)$.
    \item $\pi(x,t) = \pi(y,t)$ for all $t > s$.
\end{enumerate}
\end{lemma}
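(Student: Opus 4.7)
The plan is to establish the four implications
\[
(i)\Rightarrow(ii),\qquad (ii)\Rightarrow(iii),\qquad (iii)\Rightarrow(ii),\qquad (ii)\Rightarrow(i),
\]
with the bulk of the work concentrated in $(iii)\Rightarrow(ii)$. The key preliminary observation, which I would record up front, is the following concrete description of the Reeb equivalence: for $(a,t),(b,t)\in E_f$, one has $\pi(a,t)=\pi(b,t)$ if and only if $a$ and $b$ lie in the same connected component of the sublevel set $f^{-1}(-\infty,t]$. This reduces all three conditions to statements about components of sublevel sets.

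With that in place, $(i)\Rightarrow(ii)$ is routine: by definition of the partial order, there is a witness $z\in X$ and real numbers $r'\leq s'$ with $\pi(x,r)=\pi(z,r')$ and $\pi(y,s)=\pi(z,s')$, and matching heights via the projection $\phi$ forces $r'=r$ and $s'=s$. Thus $x,z$ lie in a common component of $f^{-1}(-\infty,r]\subseteq f^{-1}(-\infty,s]$, and hence $\pi(x,s)=\pi(z,s)=\pi(y,s)$. The implication $(ii)\Rightarrow(iii)$ follows immediately from the fact that connected components of $f^{-1}(-\infty,s]$ are contained in connected components of $f^{-1}(-\infty,t]$ whenever $s\leq t$. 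The implication $(ii)\Rightarrow(i)$ is just the definition of $\leq$ applied with the witness taken to be $x$ itself, using $f(x)\leq r\leq s$ and $\pi(y,s)=\pi(x,s)$.

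The hard part will be $(iii)\Rightarrow(ii)$, which requires compactness; without it the statement genuinely fails. For each $t>s$ let $C_t\subseteq X$ be the connected component of $x$ in $f^{-1}(-\infty,t]$. Continuity of $f$ and compactness of $X$ make each $f^{-1}(-\infty,t]$ compact, hence each $C_t$ is a compact connected set, and $\{C_t\}_{t>s}$ is manifestly decreasing as $t$ decreases toward $s$. The hypothesis $\pi(x,t)=\pi(y,t)$ places $y\in C_t$ for every $t>s$. Setting
\[
C \;=\; \bigcap_{t>s} C_t,
\]
any $z\in C$ satisfies $f(z)\leq t$ for all $t>s$, so $f(z)\leq s$ and $C\subseteq f^{-1}(-\infty,s]$; moreover $x,y\in C$. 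The crux is to show $C$ is connected, which I would obtain by invoking the classical result that a decreasing intersection of compact connected subsets of a compact Hausdorff space is connected. Once $C$ is known connected, it witnesses that $x$ and $y$ share a connected component of $f^{-1}(-\infty,s]$, giving $\pi(x,s)=\pi(y,s)$.

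If the ambient compactness/Hausdorff hypotheses on $X$ need to be made explicit, I would briefly sketch the standard proof of the intersection lemma: a separation $C=A\sqcup B$ into disjoint nonempty compacts yields, by normality, disjoint open neighborhoods $U\supseteq A$ and $V\supseteq B$, and a tube/filtration argument then produces some $t_0>s$ with $C_{t_0}\subseteq U\cup V$, contradicting connectedness of $C_{t_0}$. This is the one genuinely topological ingredient; the rest of the lemma is bookkeeping about the Reeb equivalence and the definition of the partial order.
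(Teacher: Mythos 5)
Your proof is correct and follows essentially the same route as the paper: the easy implications are handled by the nesting of sublevel sets and the order definition, and the substantive direction (from uniform identification above height $s$ down to identification at height $s$) is obtained via a decreasing intersection of compact connected sets, exactly as in the paper's proof, which runs the same compact-intersection argument on a sequence $s_n\downarrow s$ rather than the continuous family $t>s$. The only cosmetic differences are that you work with components of sublevel sets $f^{-1}(-\infty,t]$ in $X$ rather than with level sets $\phi^{-1}(t)$ in $E_f$ (these are homeomorphic) and that you split the cycle into four implications rather than three.
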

\begin{proof}
$i) \implies ii):$ Let $z$ be a point in $X$ such that $\pi(z,r)=\pi(x,r)$ and $\pi(z,s) = \pi(y,s)$.  Let $C$ be the connected component of $\phi^{-1}(r)$ containing $(x,r)$ and $D$ be the connected component in $\phi^{-1}(s)$ containing $(y,s)$. Let $C'$ be the subset of $E_f$ obtained by shifting up the second coordinates of points in $C$ to $s$. As $C$ contains $(z,r)$ and $D$ contains $(z,s)$, both $C'$ and $D$ contains $(z,s)$. Since $C'$ is connected, it is contained in $D$. Hence $\pi(x,s) = \pi(y,s)$.

$ii) \implies iii)$ Note that $\pi(x,s) = \pi(y,s) \leq \pi(y,t)$. Now, the result follows from the argument in the previous part.

$iii) \implies i)$ Let $(s_n)$ be a decreasing sequence converging to $s$. Let $C_n \times \{ s_n\}$ be the connected component of $\phi^{-1}(s_n)$ containing both $(x,s_n)$ and $(y,s_n)$. Note that $(C_n)$ is a decreasing sequence of closed connected sets in $X$. Let $C$ be the intersection of $C_n's$. Note that $C \times \{s\}$ is contained in $E_f$ and $x,y$ are elements of $C$. Let us show that $C$ is connected. Let $V$ and $W$ be open subsets in $X$ such that $C$ is contained in the disjoint union of $U$ of $V$ and $W$. Assume $x$ is in $V$. Let $U_n$ be the complement of $C_n$ in $X$. The collection consisting of $U$ and all $U_n$'s is an open cover of $X$, hence it has a finite subcover. Since $(U_n)$ is an increasing family of open sets, this implies that for $n$ large enough $X$ is the union of $U$ and $U_n$, hence $C_n$ is contained in $U$. Since $C_n$ is connected and has non-empty intersection with $V$ for all $n$, for $n$ large enough $C_n$ is contained in $V$. Therefore $C$ is contained in $V$. This shows $C$ is connected, hence $\pi(x,s)=\pi(y,s)$. By definition, $\pi(x,r) \leq \pi(y,s).$
\end{proof}

\begin{corollary}\label{cor:tree_poset}
\begin{enumerate}[i)]
    \item $(T_f X, \leq)$ is a tree poset in the sense that the upper set of any point is a chain (i.e., well ordered).
    \item Let $C$ be a chain in $(T_f X, \leq)$. Then $C$ has an infimum (i.e., the maximum of its lower bounds), and the closure of $C$ in the quotient topology is contained in the upper set of its infimum.
\end{enumerate}
\end{corollary}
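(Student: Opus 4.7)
For part (i), I would apply Lemma \ref{lem:poset_characterization} directly. Pick $q_1, q_2$ in the upper set of $p = \pi(x,r)$, write $q_i = \pi(y_i, s_i)$, and assume without loss of generality $s_1 \leq s_2$. The implication (i)$\Rightarrow$(ii) applied to $p \leq q_i$ gives $\pi(x, s_i) = \pi(y_i, s_i)$, and (ii)$\Rightarrow$(iii) applied to the pair $(x,r), (y_1, s_1)$ propagates the identification upward, so that $\pi(y_1, s_2) = \pi(x, s_2) = \pi(y_2, s_2)$. A final application of (ii)$\Rightarrow$(i) to the pair $(y_1, s_1), (y_2, s_2)$ then yields $q_1 \leq q_2$.

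For the infimum in part (ii), set $r = \inf_{p \in C} \tilde{\pi}_f(p)$, which is finite by compactness of $X$ and continuity of $f$. For each $p \in C$ choose a representative $(x_p, s_p)$ with $\pi(x_p, s_p) = p$, and let $K_p \subseteq X$ be the connected component of the sublevel set $f^{-1}(-\infty, s_p]$ containing $x_p$; each $K_p$ is closed in the compact Hausdorff space $X$. Using Lemma \ref{lem:poset_characterization}, one checks that $p \leq q$ in $C$ implies $K_p \subseteq K_q$, so $\{K_p\}_{p \in C}$ is a downward-filtered family of nonempty compact connected sets, and its intersection $K := \bigcap_{p \in C} K_p$ is nonempty, closed, and connected (decreasing intersections of compact connected subsets of a compact Hausdorff space are connected). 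Since $K \subseteq f^{-1}(-\infty, s_p]$ for every $p$, also $K \subseteq f^{-1}(-\infty, r]$. Pick any $y_0 \in K$ and set $p_0 := \pi(y_0, r)$; I would then verify $p_0 = \inf C$ by two arguments. First, $p_0$ is a lower bound because $y_0 \in K_p$ implies $\pi(y_0, s_p) = p$, hence $p_0 \leq p$ by the lemma. Second, $p_0$ is the greatest lower bound, because any lower bound $q = \pi(z, t)$ of $C$ satisfies $z \in K_p$ for every $p$ (again by the lemma), hence $z \in K$, and then the connectedness of $K$ inside $f^{-1}(-\infty, r]$ forces $\pi(z, r) = \pi(y_0, r) = p_0$, giving $q \leq p_0$.

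For the closure statement, since $C$ lies in the upper set $U_{p_0}$ of $p_0$, it suffices to show that $U_{p_0}$ is itself closed in the quotient topology, equivalently that $\pi^{-1}(U_{p_0}) = \{(x,s) \in E_f : s \geq r \text{ and } \pi(y_0, s) = \pi(x,s)\}$ is closed in $E_f$. For a convergent net $(x_\alpha, s_\alpha) \to (x,s)$ in this set I would pass to a monotone cofinal subnet and split into two cases. When $s_\alpha \downarrow s$, the components of $f^{-1}(-\infty, s_\alpha]$ containing $y_0$ (which also contain the $x_\alpha$) form a decreasing family of compact connected sets whose intersection is a compact connected subset of $f^{-1}(-\infty, s]$ containing both $y_0$ and $x$. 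When $s_\alpha \uparrow s$, these components form an increasing family, and the closure of their union is a connected subset of the closed set $f^{-1}(-\infty, s]$ containing both $y_0$ and $x$. Either way $\pi(x, s) = \pi(y_0, s)$. The hard part is the recurring connectedness argument that appears both in the infimum construction and in the closure proof; both invocations reduce to the classical compact Hausdorff fact cited above, and no metrizability of $X$ is required provided one argues with nets throughout.
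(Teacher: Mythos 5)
Part (i) is the paper's one-line argument written out in full, so no comment there. Part (ii) takes a genuinely different route on both halves. For the infimum, the paper extracts a sequence $(x_n,r_n)$ in $C$ with heights decreasing to $r$ and passes to a convergent subsequence of $(x_n)$, implicitly assuming sequential compactness of $X$; you instead take $y_0$ in the intersection $K=\bigcap_p K_p$ of the nested components. Your route avoids sequential compactness, makes the greatest-lower-bound verification cleanly algebraic (any lower bound $\pi(z,t)$ forces $z\in K$), and imports nothing new, since the fact that a downward-filtered family of closed connected subsets of a compact space has nonempty connected intersection is exactly what the paper already uses in the proof of (iii)$\Rightarrow$(i) of Lemma~\ref{lem:poset_characterization}. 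One small over-hypothesis: the covering argument used there works without Hausdorffness, so you can drop ``Hausdorff'' (the paper only assumes $X$ compact). For the closure statement, the paper shows $\pi(x,r)$ is a lower bound for $\pi$-images of limits of sequences in $\pi^{-1}(C)$; your plan to prove directly that $\pi^{-1}(U_{p_0})$ is closed in $E_f$ is strictly stronger --- it sidesteps the question of whether $\pi(\overline{\pi^{-1}(C)})$ exhausts $\overline{C}$ in the quotient topology --- and amounts to proving Lemma~\ref{lem:upper_set} in the quotient topology ahead of time.

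The one step that does not hold as written is ``pass to a monotone cofinal subnet.'' A convergent net indexed by an arbitrary directed set need not admit a cofinal monotone subnet; there is no Bolzano--Weierstrass principle for nets, so the case split into $s_\alpha\downarrow s$ and $s_\alpha\uparrow s$ is not a valid reduction in general. The fix both repairs this and shortens the argument, eliminating the case split entirely: fix $t>s$. Cofinally many $\alpha$ have $s_\alpha<t$, and for each such $\alpha$ the component of $f^{-1}(-\infty,s_\alpha]$ containing $y_0$ (which also contains $x_\alpha$) sits inside the closed component $D_t$ of $f^{-1}(-\infty,t]$ containing $y_0$. Hence $x\in D_t$ and $\pi(x,t)=\pi(y_0,t)$ for every $t>s$, so $\pi(x,s)=\pi(y_0,s)$ by (iii)$\Rightarrow$(ii) of Lemma~\ref{lem:poset_characterization}. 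This is also the mechanism the paper itself uses in its sequential version.
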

\begin{proof}

$i):$ Assume that  $\pi(x,r) \leq \pi(y,s)$, $\pi(x,r) \leq \pi(z,t)$ and $s \leq t$. Then, by Lemma \ref{lem:poset_characterization}, $\pi(y,t) = \pi(x,t) = \pi(z,t)$, and therefore $\pi(y,s) \leq \pi(z,t)$. 

$ii):$ let $(x_n,r_n)$ be a sequence in $C$ such that $\lim r_n = r : = inf\{s : (x,s) \in C \} $. The sequence $(x_n)$ has a subsequence converging to a point $x$ in $X$. Let $(y_n,s_n)$ be a convergent sequence in $E_f$ such that $\pi(y_n,s_n)$ is in $C$ for all $n$. Let $lim(y_n, s_n) = (y,s)$. We need to show that $\pi(x,r) \leq \pi(y,s)$. Let $t>s$. Without loss of generality, we can assume that $r_n, s_n \leq t$ for all $n$ and $(x_n)$ converges to $x$. Since $C$ is a chain, by Lemma \ref{lem:poset_characterization}, $\pi(y_i,t)=\pi(x_j,t)$ for all $i,j$. Let $D$ be the connected component of $\phi^{-1}(t)$ containing $(y_n,t)$ and $(x_n,t)$ for all $n$. Since $D$ is closed, it contains $(x,t)$ and $(y,t)$, hence $\pi(x,t) = \pi(y,t)$. Since $t>s$ was arbitrary, by Lemma \ref{lem:poset_characterization}, $\pi(x,r) \leq \pi(y,s)$. This shows $\pi(x,r)$ is a lower bound for the closure of $C$. Let $\pi(x',r')$ be another lower bound for $C$. For any $t>r$, there exists $n$ such that $r_n \leq t$, so $\pi(x,t) = \pi(x_n, t) = \pi(x',t)$. Therefore, by Lemma \ref{lem:poset_characterization}, $\pi(x',r') \leq \pi(x,r)$.
\end{proof}

We introduce a new topology on $T_f X$, which we call the \textit{interval topology}, defined as follows. 

\begin{definition}\label{defn:interval-topology}
Let $T_f X$ be the merge tree, viewed as a poset.
A subset $U$ of $T_fX$ is open in the \define{interval topology} if for each $p$ in $U$ and each $q$ in $X \setminus \{ p \}$ comparable to $p$, there exists $w$ in $U$ strictly in between $p$ and $q$ such that the interval between $p$ and $w$ is contained in $U$.
\end{definition}

\begin{proposition}\label{prop:interval_topology}
If $(T_f X, \leq)$ has finitely many leaves (i.e., minimal elements in the poset), then the quotient topology coincides with the interval topology.
\end{proposition}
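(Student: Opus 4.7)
The plan is to prove containment of topologies in both directions.

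For the implication that quotient-open sets are interval-open (which does not require the finite-leaves hypothesis), fix a quotient-open $U$, a point $p \in U$, and $q \neq p$ comparable to $p$. Suppose $p < q$ and write $p = \pi(x_0, r_0)$, $q = \pi(y_0, s_0)$; by Lemma \ref{lem:poset_characterization}, $\pi(x_0, s_0) = q$. The path $t \mapsto \pi(x_0, t)$ from $[r_0, s_0]$ to $T_f X$ is continuous for the quotient topology and sends $r_0$ to $p \in U$, so there exists $\epsilon \in (0, s_0 - r_0)$ with $\pi(x_0, [r_0, r_0 + \epsilon]) \subset U$. Setting $w := \pi(x_0, r_0 + \epsilon)$, Corollary \ref{cor:tree_poset}(i) identifies the upper set of $p$ as the chain $\{\pi(x_0, t) : t \geq r_0\}$, so the poset interval $[p, w]$ equals $\{\pi(x_0, t) : t \in [r_0, r_0 + \epsilon]\} \subset U$, and by construction $w$ is strictly between $p$ and $q$. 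The case $q < p$ is entirely symmetric, parameterizing downward by $t \mapsto \pi(y_0, t)$ for $t \in [s_0, r_0]$.

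For the reverse implication that interval-open sets are quotient-open, let $v_1, \ldots, v_n$ be the (finitely many) leaves with $v_i = \pi(y_i, \alpha_i)$, and let $U_{v_i}$ denote its principal upper set. By Corollary \ref{cor:tree_poset}(i), each $U_{v_i}$ is a chain, and the order isomorphism $s \mapsto \pi(y_i, s)$ from $[\alpha_i, \infty)$ to $U_{v_i}$ is a homeomorphism when $U_{v_i}$ carries the subspace interval topology. Every $p \in T_f X$ lies above some leaf: if $p = \pi(x, r)$ and $C$ is the connected component of $x$ in $f^{-1}(-\infty, r]$, the compactness of $C$ yields a minimizer $x_0$ of $f|_C$; the connected component of $x_0$ in $f^{-1}(-\infty, f(x_0)]$ is contained in $C$ by maximality, so $f \geq f(x_0)$ on it and hence $\pi(x_0, f(x_0))$ is a leaf below $p$. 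Thus $\{\pi^{-1}(U_{v_i})\}_{i=1}^n$ is a finite cover of $E_f$.

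The key technical step is to show that each $\pi^{-1}(U_{v_i})$ is closed in $E_f$. Writing $K^i_s$ for the connected component of $y_i$ in $f^{-1}(-\infty, s]$, this preimage equals $\{(z, s) \in E_f : s \geq \alpha_i \text{ and } z \in K^i_s\}$. For a convergent net $(z_\lambda, s_\lambda) \to (z, s)$ in this set, the monotonicity $K^i_{s'} \subset K^i_t$ for $s' \leq t$ gives $z_\lambda \in K^i_t$ eventually for every $t > s$, and closedness of $K^i_t$ in $X$ then forces $z \in K^i_t$. The intersection $L := \bigcap_{t > s} K^i_t$ is a nested intersection of compact connected sets in the compact Hausdorff space $X$, hence itself connected; it contains $y_i$ and lies in $f^{-1}(-\infty, s]$, so by maximality of $K^i_s$ we obtain $L \subset K^i_s$ and therefore $z \in K^i_s$. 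Finally, $\pi|_{\pi^{-1}(U_{v_i})}$ sends $(z, s) \mapsto \pi(y_i, s)$ and thus factors as the continuous coordinate projection $(z, s) \mapsto s$ followed by the homeomorphism above, so it is continuous for the interval topology on $U_{v_i}$. For any interval-open $U$, each $\pi^{-1}(U \cap U_{v_i})$ is therefore open in $\pi^{-1}(U_{v_i})$, and the pasting lemma applied to the finite closed cover yields that $\pi^{-1}(U) = \bigcup_i \pi^{-1}(U \cap U_{v_i})$ is open in $E_f$, so $U$ is quotient-open. The main obstacle is proving closedness of the $\pi^{-1}(U_{v_i})$; this is where compactness of $X$ and the finite-leaves hypothesis are essentially used, via the nested-intersection argument.
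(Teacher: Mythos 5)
Your proof is correct and reaches the same conclusion, but with a different proof structure worth comparing to the paper's.

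The paper handles both inclusions simultaneously: it notes that the principal upper sets $C_i$ of the leaves form a finite closed cover of $T_fX$ in \emph{both} topologies (citing Corollary~\ref{cor:tree_poset}(ii) for the quotient side and Lemma~\ref{lem:upper_set} for the interval side), and then argues that the two subspace topologies on each $C_i$ coincide because the height map and $\psi_{y_i}$ of Lemmas~\ref{lem:height} and~\ref{lem:upper_map} are mutually inverse and continuous in both topologies. Your argument splits the inclusions. For quotient-open $\Rightarrow$ interval-open you give a direct argument using only continuity of the vertical path $t \mapsto \pi(x_0,t)$, which indeed does not use the finite-leaves hypothesis at all --- a genuine strengthening over what the paper states. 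For the converse you lift to $E_f$: you show the preimages $\pi^{-1}(U_{v_i})$ are closed in $E_f$ and that $\pi$ restricted to each preimage is continuous into $U_{v_i}$ with the interval topology, then paste. This route avoids invoking Lemma~\ref{lem:upper_set} altogether, and you also explicitly verify the covering property (every point sits above a leaf) via the minimizer argument, which the paper leaves implicit.

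One small economy you could make: you do not actually need the nested-intersection-of-compact-connected-sets step to conclude $z \in K^i_s$. Having shown $z \in K^i_t$, i.e. $\pi(z,t) = \pi(y_i,t)$, for every $t > s$, Lemma~\ref{lem:poset_characterization}(iii) already gives $\pi(y_i,\alpha_i) \leq \pi(z,s)$, which is precisely membership in $\pi^{-1}(U_{v_i})$. This shortcut also removes the reliance on $X$ being Hausdorff (needed for the nested-intersection argument), though the paper's own Lemma~\ref{lem:poset_characterization} proof uses that assumption implicitly anyway.
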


\begin{lemma}\label{lem:upper_set}
The upper set of any point is closed in the interval topology.
\end{lemma}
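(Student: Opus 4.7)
The plan is to show the complement $V := T_fX \setminus U_p$ is open in the interval topology, from which the lemma follows. Fix $r \in V$ (so $p \not\leq r$) and any $q \in T_fX \setminus\{r\}$ comparable to $r$; I must exhibit a point $w \in V$ strictly between $r$ and $q$ such that the full interval from $r$ to $w$ is contained in $V$.

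I split into two cases according to whether $q < r$ or $q > r$. In the first case, the entire lower set of $r$ lies in $V$: for any $x \leq r$, the relation $p \leq x$ would force $p \leq r$ and contradict $r \in V$. Hence any $w$ strictly between $q$ and $r$ satisfies the openness requirement automatically. In the second case, Corollary \ref{cor:tree_poset}(i) says that the upper set $U_r$ is a chain, so $[r,q]$ is linearly ordered. Consider $S := U_p \cap [r,q]$. If $S = \varnothing$, any $w \in (r,q)$ works. Otherwise $S$ is itself a chain, so by Corollary \ref{cor:tree_poset}(ii) it has an infimum $m$; since $r \notin U_p$, necessarily $m > r$. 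Any $w$ strictly between $r$ and $m$ then lies in $V$, and the interval $[r,w]$ is disjoint from $S$ and contained in $[r,q]$, hence contained in $V$.

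The main subtlety I anticipate is producing the intermediate point $w$ strictly between two comparable poset elements. This is where I plan to use the continuous structure of the merge tree inherited from $E_f$: given $r < u$ in $T_fX$, the definition of the order on $T_fX$ furnishes representatives $(x, s)$ of $r$ and $(x, t)$ of $u$ lying over a common first coordinate $x \in X$, with $s < t$, and then for any intermediate height $s' \in (s,t)$ the class $\pi(x, s')$ lies strictly between $r$ and $u$ in the poset. Applying this observation with $u = q$ in the first case and with $u = m$ in the second supplies the required $w$, thereby establishing that every point of $V$ has an open neighborhood within $V$ and completing the proof that $U_p$ is closed.
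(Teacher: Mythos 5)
Your proof is correct, and it takes a genuinely different route from the paper's. The paper's argument invokes Lemma~\ref{lem:poset_characterization} directly: given $\pi(x,r) \in U = T_fX \setminus U_p$ and a comparable point $\pi(y,s)$ above it, the lemma produces a height $t \in (r,s)$ with $\pi(x,t) \in U$, and since $U$ is a down-set the whole interval $[\pi(x,r),\pi(x,t)]$ is then automatically in $U$. You instead work with the set $S = U_p \cap [r,q]$ and appeal to Corollary~\ref{cor:tree_poset}(ii) to produce its infimum $m$; the key step is that $m > r$, which lets you take any $w$ strictly between $r$ and $m$. This step \emph{is} correct, but your stated justification (``since $r \notin U_p$'') is not quite enough on its own: a priori an up-set could intersect $[r,q]$ in the half-open interval $(r,q]$, giving $\inf S = r \notin S$. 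The missing observation is that $p$ itself is a lower bound of $S$ (every element of $S$ lies in $U_p$), so Corollary~\ref{cor:tree_poset}(ii)'s ``maximum of lower bounds'' clause gives $p \leq m$, hence $m \in U_p$, hence $m \neq r$, hence $m > r$. Once that one line is added, your argument closes cleanly. A cosmetic point: in the final paragraph you write ``Applying this observation with $u = q$ in the first case,'' but in the first case $q < r$, so the intermediate-point observation should be applied to the pair $(q,r)$ with $q$ playing the role of the lower element, not the upper. As a comparison of approaches: the paper's proof bypasses the infimum entirely and stays at the level of heights via Lemma~\ref{lem:poset_characterization}; your version uniformizes both cases around a single intermediate-point construction and exhibits the role of Corollary~\ref{cor:tree_poset}(ii), at the cost of a slightly longer chain of appeals.
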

\begin{proof}
Let $C$ be the upper set of a point $\pi(z,u)$ in $T_f X$. Let $U$ be its complement. Let us show that $U$ is open in the interval topology. Let $\pi(x,r)$ be in $U$. Let $\pi(y,s)$ be comparable to $\pi(x,r)$. If $\pi(y,s) < \pi(x,r)$, then everything in between them is already in $U$ as $U$ is the complement of an upper set. Let us assume then $\pi(x,r) < \pi(y,s)$. By Lemma \ref{lem:poset_characterization}, there exists $t$ such that $r<t<s$ and $\pi(x,t)$ is not in the upper set of $\pi(z, u)$. Then $\pi(x,t)$ is strictly in between $\pi(x,s)$ and $\pi(y,s)$ and the interval in between $\pi(x,s)$ and $\pi(x,t)$ is in $U$.
\end{proof}

\begin{lemma}\label{lem:height}
The height map $h: T_f X \to \R$, $\pi(x,r) \mapsto r$ is continuous with respect to the interval topology.
\end{lemma}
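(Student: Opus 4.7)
The plan is to verify that the preimage under $h$ of any open interval $(a,b) \subseteq \R$ is open in the interval topology on $T_fX$, since such intervals form a subbasis for the topology on $\R$. Fix $a < b$, set $U := h^{-1}((a,b))$, and take any $p \in U$, writing $p = \pi(x, r)$ so that $r \in (a, b)$. To verify the defining condition of the interval topology at $p$, I must show that for any $q \in T_fX \setminus \{p\}$ comparable to $p$ I can produce a $w \in U$ strictly between $p$ and $q$ whose order-interval with $p$ is contained in $U$.

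I would split into the two cases $p < q$ and $q < p$. In the first, write $q = \pi(y, s)$ with $s > r$. Lemma~\ref{lem:poset_characterization} gives $\pi(x, t) = \pi(y, t)$ for all $t \ge s$, and more usefully $p \le \pi(x, t) \le q$ for every $t \in [r, s]$. Pick any $s'$ with $r < s' < \min(s, b)$---nonempty because $r < b$ and $r < s$---and set $w := \pi(x, s')$, so that $h(w) = s' \in (a, b)$ gives $w \in U$, while $p < w < q$ strictly since the three points have distinct heights. The case $q < p$ is symmetric: write $q = \pi(y, s)$ with $s < r$, use Lemma~\ref{lem:poset_characterization} to see that $\pi(y, t) \in [q, p]$ for all $t \in [s, r]$, choose $s' \in (\max(s, a), r)$, and set $w := \pi(y, s')$.

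It remains to check that the order-interval between $p$ and $w$ is contained in $U$. By Corollary~\ref{cor:tree_poset}(i) the upper set of any point in $T_fX$ is a chain, so this order-interval is totally ordered. The one genuinely delicate step---and the one I would flag as the main technical point---is arguing that every $z$ in this chain satisfies $h(z) \in [\min(r, s'), \max(r, s')] \subseteq (a, b)$. This reduces to observing that $h$ is strictly order-preserving on chains: if $z_1 \le z_2$ then the definition of the order yields $h(z_1) \le h(z_2)$, with equality forcing $z_1 = z_2$ via Lemma~\ref{lem:poset_characterization}(ii). Since $h(p) = r$ and $h(w) = s'$ both lie in $(a, b)$, the entire order-interval lies in $U$. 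This verifies the interval-topology condition at every $p \in U$ and every comparable $q$, so $U$ is open; continuity of $h$ follows.
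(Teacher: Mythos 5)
Your proof is correct and follows essentially the same approach as the paper's: you reduce to preimages of open intervals in $\R$, split by whether the comparable point lies above or below $p$, and in each case pick $w$ at a height strictly between $h(p)$ and $h(q)$ lying along the same ray, just as the paper does. The only difference is presentational — you use a general subbasic interval $(a,b)$ where the paper uses $(r-\epsilon, r+\epsilon)$ — and you are somewhat more explicit in justifying that the order-interval $[p,w]$ maps into $(a,b)$ via monotonicity of $h$, a step the paper states more tersely.
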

\begin{proof}
Let $r$ be a real number and $\epsilon>0$. We need to show that the preimage of $(r-\epsilon, r+\epsilon)$ is open in the interval topology. Assume that $\pi(x,s)$ is in $T_f X$ and $|s-r| < \epsilon$. Let $\pi(y,t)$ be a point comparable to $\pi(x,s)$. If $\pi(y,t) < \pi(x,s)$, then $\pi(x,s) = \pi(y,s)$. Pick $s'>t$ such that $r-\epsilon < s' < s$. Then $\pi(y,s')$ is strictly in between $\pi(y,t)$ and $\pi(x,s)$ and the image of the interval between $\pi(y,s')$ and $\pi(x,s)$ is $[s',s]$, which is contained in $(r-\epsilon, r+ \epsilon)$. If $\pi(y,t) > \pi(x,s)$ , then $\pi(x,t)=\pi(y,t)$. Pick $s' < t$ such that $s < s' < r + \epsilon$. Then $\pi(x,s')$ is strictly in between $\pi(x,s)$ and $\pi(y,t)$, and the image of the interval between $\pi(x,s)$ and $\pi(x,s')$ is $[s. s']$, which is contained in $(r-\epsilon, r+\epsilon)$.
\end{proof}

\begin{lemma}\label{lem:upper_map}
For each $x$ in $X$, the map $\psi_x: [f(x), \infty) \to T_f X$, $r \mapsto \pi(x,r)$ is continuous with respect to the interval topology.
\end{lemma}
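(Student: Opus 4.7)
My plan is to fix $r_0 \in [f(x), \infty)$ and an interval-topology open neighborhood $U$ of $p := \pi(x, r_0) = \psi_x(r_0)$, and then produce $\epsilon > 0$ such that $\psi_x\bigl((r_0 - \epsilon, r_0 + \epsilon) \cap [f(x), \infty)\bigr) \subset U$. The structural fact I will exploit repeatedly is a consequence of Lemma \ref{lem:poset_characterization}: the map $\psi_x$ is order-preserving and its image is a chain, and moreover the upper set of $\psi_x(r)$ is exactly $\psi_x([r, \infty))$. Indeed, if $z = \pi(y, s) \geq \pi(x, r)$ then Lemma \ref{lem:poset_characterization} forces $\pi(y, s) = \pi(x, s)$, so $z = \psi_x(s)$. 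In particular, every order interval $[\psi_x(r), \psi_x(r')]$ in $T_f X$ coincides with $\psi_x([r, r'])$.

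The argument splits into an ``upward'' and a ``downward'' part. For the upward direction, if $\psi_x$ is not constant on $[r_0, \infty)$ I pick some $r_1 > r_0$ with $q := \psi_x(r_1) > p$. Since $q$ is comparable to $p$ and distinct from it, the openness of $U$ at $p$ supplies $w \in U$ strictly between $p$ and $q$ with $[p,w] \subset U$. By the structural fact, $w = \psi_x(r_+)$ for some $r_0 < r_+ < r_1$ and $[p, w] = \psi_x([r_0, r_+])$, so $\epsilon_+ := r_+ - r_0$ works on the right. If $\psi_x$ is constant on $[r_0, \infty)$ (which happens when $p$ is a root), every comparable $q \neq p$ lies below $p$, so the upward condition is vacuous and any $\epsilon_+ > 0$ works. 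The downward direction is completely symmetric, provided $r_0 > f(x)$ and $\psi_x$ is not constant on a left-neighborhood of $r_0$: pick $r_{-1} \in [f(x), r_0)$ with $q := \psi_x(r_{-1}) < p$, apply openness to obtain $w$ strictly between $q$ and $p$ with $[w, p] \subset U$, and observe that $w$, being in the upper set of $\psi_x(r_{-1})$, must equal $\psi_x(s_1)$ for some $r_{-1} < s_1 < r_0$. Then $[w, p] = \psi_x([s_1, r_0])$, and $\epsilon_- := r_0 - s_1$ works on the left. Taking $\epsilon := \min(\epsilon_+, \epsilon_-)$ finishes the proof.

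The only substantive step is the identification of poset intervals $[w, p]$ with images $\psi_x([s_1, r_0])$ of real intervals; this is where the specific structure of the Reeb quotient enters, via Lemma \ref{lem:poset_characterization}. The various degenerate scenarios---$r_0$ at the leaf height $f(x)$, $p$ at a root, $\psi_x$ being locally constant---are all painlessly absorbed by noting that the interval-topology openness condition is vacuous whenever there is no strictly comparable $q$ in the relevant direction, so no separate case analysis is needed beyond observing the vacuity.
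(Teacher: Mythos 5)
Your proof is correct and takes essentially the same route as the paper's, which is extremely terse: the paper simply remarks that the result follows from the comparability of $\pi(x,r)$ with $\pi(x,f(x))$ (below) and $\pi(x,r+1)$ (above), and you have filled in exactly the details that hint points to (using the openness condition on $U$ in both directions and identifying the resulting poset interval with $\psi_x$ of a real interval via Lemma \ref{lem:poset_characterization}). One minor remark: the degenerate case ``$\psi_x$ constant on $[r_0,\infty)$'' that you flag as vacuous can never occur, since $\psi_x$ is injective (the height of $\pi(x,r)$ is $r$), so you could have dropped that caveat entirely.
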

\begin{proof}
Let $U$ be an open set in the interval topology. Assume $\psi_x(r)=\pi(x,r)$ is in $U$. The result follows from the comparability of $\pi(x,r)$ with $\pi(x, f(x))$ and $\pi(x, r+1)$.
\end{proof}

\begin{proof}[Proof of Proposition \ref{prop:interval_topology}]
Let $C_1,\dots,C_n$ be the upper sets of leaves in $T_f X$. In both topologies, they give a finite closed cover by Corollary \ref{cor:tree_poset} and Lemma \ref{lem:upper_set}. This implies that a set $U$ is open in these topologies if and only if its intersection with each $C_i$'s is relatively open. Hence,  it is enough to show that the relative topologies on the $C_i$'s coincide for the quotient and the interval topologies. Both the height map $\pi(x,r) \mapsto r$  of Lemma \ref{lem:height} and $\psi_x$ of Lemma \ref{lem:upper_map} are continuous with respect to both topologies. If  we let $\pi(x,f(x))$ denote the leaf of $C_i$, and restrict $h$ to $C_i$, then these maps becomes inverses of each other. This completes the proof.
\end{proof}

We now return to the proof of Proposition \ref{prop:merge_tree_metrics} from the main text, which says that the topology induced by the $\ell^p$-metrics on a merge tree agrees with the quotient space topology.

\begin{proof}[Proof of Proposition \ref{prop:merge_tree_metrics}]
Using the logic from the proof of Proposition \ref{prop:interval_topology}, it suffices to show that the relative topologies on the $C_i$'s with respect to the interval and metric topologies coincide. Indeed, the intersection of an $\ell^p$-ball with each upper set $C_j$ of a leaf of the merge tree is relatively open with respect to the interval topology. This is clear, since this intersection is homeomorphic to an open interval via the height map. Similarly, relatively open sets in the interval topology are relatively open in the metric topology.
\end{proof}

\section{Comparison and Existence of Merge Trees}\label{sec:existence_of_merge_tree}

In this paper we used two primary perspectives on merge trees: the classical merge tree (Definition~\ref{defn:merge-tree-as-reeb-graph}), which defines the merge tree as the Reeb graph of the epigraph, and the generalized merge tree (Definition~\ref{defn:generalized-MT}), which is defined as the display poset of the persistent set.
Each approach has its own advantages, but in order to ensure that the Reeb graph of a function is actually a graph requires specifying conditions on $f:X\to\R$ such as the Morse condition or piecewise linearity~\cite[p.~857]{vds}. 
In this section, we describe a third way of constructing the merge tree following the metric tree construction in~\cite{memoli2018metric} and show that when $X$ is locally path connected, the classical construction coincides with the metric construction described here.

For this section we assume that $X$ is a compact path-connected topological space and $f:X \to \R$ is a continuous function. We are using the definition of metric tree given in \cite[Section 3.4]{bowditch1991notes}, which is also called real tree or $\mathbb{R}$-tree.

\begin{remark}\label{rem:mergeTreeEssence}
As the quotient of the epigraph of $f$, the merge tree is the wedge of the image of the graph of $f$ and the half real line. Let us denote the image of the graph of $f: X \to \R$ inside the merge tree by $M_fX$. 
\end{remark}

\begin{definition}[Metric Merge Tree]
Let $X$ be a compact path connected topological space and $f:X \to \R$ be a continuous function. Let us define $m_f: X \times X \to \R$ by
$$m_f(x,y):=\inf\{\max f \circ \gamma | \gamma:[a,b] \to X, \gamma(a)=x, \gamma(b)=y \}. $$
Define $t_f: X \times X \to \R$ by
$$t_f(x,y)=2m_f(x,y)-f(x)-f(y). $$
It is easy to see that $t_f$ is a pseudo-metric (symmetric, non-negative and satisfies triangle inequality). Let us denote the associated metric space by $(\mathcal{T}_fX,t_f)$ and call it \textit{metric merge tree}. The function $f$ is still well defined on $\mathcal{T}_f X$.
\end{definition}

\begin{proposition}\label{prop:realtree} Let $X$ be a compact topological space and $f:X \to \R$ be a continuous function. Then
$(\mathcal{T}_fX,t_f)$ is a metric tree.
\end{proposition}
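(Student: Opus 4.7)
The plan is to verify that $(\mathcal{T}_f X, t_f)$ is a complete, $0$-hyperbolic, geodesic metric space, which characterizes real trees. The entire argument hinges on a single ultrametric-type inequality for the min-max height function $m_f$.

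First I would establish that
\[
m_f(x,z) \;\leq\; \max\{m_f(x,y),\, m_f(y,z)\}
\]
for all $x, y, z \in X$. The proof is by concatenating $\varepsilon$-near-optimal paths $x \to y$ and $y \to z$: the max of $f$ on the concatenated path is bounded by the larger of the two individual maxima, and sending $\varepsilon \to 0$ yields the inequality. From this, the triangle inequality for $t_f$ is an immediate algebraic consequence (using $m_f(x,y), m_f(y,z) \geq f(y)$, one checks $m_f(x,y) + m_f(y,z) - f(y) \geq \max\{m_f(x,y),m_f(y,z)\} \geq m_f(x,z)$), completing the pseudometric claim.

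Second, I would deduce the four-point (Gromov) condition
\[
t_f(x_1,x_2) + t_f(x_3,x_4) \;\leq\; \max\big\{t_f(x_1,x_3)+t_f(x_2,x_4),\ t_f(x_1,x_4)+t_f(x_2,x_3)\big\}.
\]
After cancelling the common $f(x_i)$-terms that appear on both sides, this reduces to the corresponding inequality with $m_f$ replacing $t_f$. This in turn follows from the ultrametric property via a short case analysis, exploiting the fact that on every triple of points the largest of the three $m_f$-values is attained at least twice.

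Third, I would construct explicit geodesics between representatives. Given $x, y \in X$ with $m = m_f(x,y)$, I would take a sequence of $\varepsilon_n$-optimal paths $\gamma_n:[0,1] \to X$ from $x$ to $y$, project them to $\mathcal{T}_f X$, reparameterize by the natural arc-length induced by $t_f$, and extract a limit via an Arzel\`a--Ascoli argument (using the compactness of $X$ and hence of the image in $\mathcal{T}_f X$) to produce a continuous path of length exactly $t_f(x,y) = 2m - f(x) - f(y)$. Together with completeness (inherited from compactness of $X$) and the four-point condition from the previous step, this suffices to make $(\mathcal{T}_f X, t_f)$ a real tree.

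The main obstacle I expect is the third step. The ultrametric inequality and the four-point condition are essentially formal consequences of the definition of $m_f$, but realizing an honest length-minimizing arc inside the metric quotient requires carefully passing from near-optimal paths in $X$ to a legitimate geodesic in $\mathcal{T}_f X$; this is where the compactness hypothesis on $X$ does its essential work and where one must rule out pathologies that can arise when the quotient topology on $\mathcal{T}_f X$ does not agree with the $t_f$-metric topology.
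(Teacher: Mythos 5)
Your plan shares the paper's central insight — the ultrametric-type inequality $m_f(x,z)\leq\max\{m_f(x,y),m_f(y,z)\}$ — and your second step is an equivalent reformulation of what the paper does: the paper picks a point $p$ where $f$ is maximal, computes the Gromov product $g_p(x,y)=f(p)-m_f(x,y)$, and derives $0$-hyperbolicity from the ultrametric inequality, whereas you go directly to the four-point condition. Those two routes are interchangeable. The real divergence is your third step, and there is a genuine gap there.

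The paper does \emph{not} construct geodesics. Instead it shows that $(\mathcal{T}_fX,t_f)$ is a \emph{path metric} by exhibiting an $\epsilon$-approximate midpoint for every pair $x,y$: take a single near-optimal path $\gamma$, let $M=\max f\circ\gamma$, and choose $z=\gamma(s)$ at a carefully chosen $f$-height $f(x)+\tfrac{(M-f(x))+(M-f(y))}{2}$; a short computation then bounds $t_f(x,z)$ and $t_f(y,z)$. This, together with $0$-hyperbolicity, lets the authors cite Bowditch's Proposition 3.4.2 that a $0$-hyperbolic path metric space is an $\mathbb{R}$-tree. No Arzelà--Ascoli argument, no compactness of $\mathcal{T}_fX$, no rectifiability claims are needed.

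Your third step, as written, does not go through. You propose to project near-optimal paths $\gamma_n$ into $\mathcal{T}_fX$, reparameterize by $t_f$-arc length, and pass to a limit via Arzelà--Ascoli. The obstruction is that the projected path need not be rectifiable: since $m_f(u,v)\geq\max\{f(u),f(v)\}$, every chord satisfies $t_f(\gamma_n(s),\gamma_n(s'))\geq |f(\gamma_n(s))-f(\gamma_n(s'))|$, so the $t_f$-length of $q\circ\gamma_n$ is at least the total variation of $f\circ\gamma_n$, which is generically infinite for a merely continuous $f$. Arc-length reparameterization is therefore not defined. Moreover, the Arzelà--Ascoli step also wants the images to lie in a $t_f$-compact set; compactness of $\mathcal{T}_fX$ does not follow from compactness of $X$, because the quotient map $X\to\mathcal{T}_fX$ is not continuous without local path-connectedness of $X$ (you correctly sense this danger in your final paragraph, but it is fatal to the argument as stated rather than a technicality to be checked). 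A fix would require replacing the arc-length reparameterization by something adapted to the tree structure — e.g.\ a monotone-in-$f$ reparameterization of the ascent from $x$ to the approximate $\mathrm{LCA}$ — which is essentially the content of the paper's midpoint construction. So you identified the right hard step, but the specific device you chose for it does not work.
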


\begin{proof}
It is enough to show that $t_f$ is a path metric and it  has hyperbolicity zero (see \cite[Proposition 3.4.2]{bowditch1991notes}). To show that $t_f$ is a path metric, it is enough to show that for each $x,y$ i $X$ and $\epsilon>0$, there exists $z$ in $X$ such that 
$$t_f(x,z) \leq \frac{t_f(x,y)}{2}+\epsilon, \qquad t_f(y,z) \leq \frac{t_f(x,y)}{2}+\epsilon$$ 
(see \cite[Theorem 2.4.16]{burago2001course}). Take $x,y$ in $X$ and $\epsilon>0$. Let $\gamma:[0,1] \to X$ be a continuous curve from $x$ to $y$ such that $$m_f(x,y) \geq \max f \circ \gamma \epsilon.$$
Let $M:=\max f\circ \gamma$. Define
$$t_0:=\min \{t: f(\gamma(t))=M \}, \qquad t_1:=\max \{t:f(\gamma(t))=M \}. $$
Let $d_0:=M-f(x)$ and $d_1:=M-f(y)$. Without loss of generality $d_0 >= d_1$. Let $r:=(d_0+d_1)/2$ and 
$$s:=\min \{t: f\circ \gamma(t)=f(x)+r \} .$$
Let $z=\gamma(s)$. Now, we have:
\begin{align*}
    t_f(x,y) &\geq 2(M-\epsilon) - f(x) -f(y)=d_0+d_1-2\epsilon \\
    t_f(x,z) &= r = \frac{d_0+d_1}{2}\\
    t_f(y,z) &\leq (M-f(x)-r)+d_1=\frac{d_0+d_1}{2}.
\end{align*}

Now let us show that $t_f$ has $0$-hyperbolicity. Let $p$ be a point where $f$ takes its maximum. The Gromov product $g_p: X \times X \to \R$ with respect to the pseudo-metric $t_f$ is defined by:
$$g_p(x,y)=\frac{1}{2}(t_f(p,x)+t_f(p,y)-t_f(x,y)). $$
The metric $t_f$ has zero hyperbolicty if $$g_p(x,z) \geq \min(g_p(x,y),g_p(y,z))$$ for all $x,y,z$ in $X$ (see \cite[Corollary 1.1.B]{gromov1987hyperbolic}).

\textbf{Claim 1}: $g_p(x,y)=f(p)-m_f(x,y)$ for all $x,y$ in $X$:

By maximality of $p$, $m_f(p,x)=f(p)$. Hence, 
\begin{align*}
    t_f(p,x) = 2m_f(p,x)-f(p)-f(x) = f(p)-f(x).
\end{align*}
So we have
\begin{align*}
    g_p(x,y) &=\frac{1}{2}(t_f(p,x)+t_f(p,y)-t_f(x,y)) \\
            &=\frac{1}{2}(f(p)-f(x)+f(p)-f(y)-2m_f(x,y)+f(x)+f(y) ) \\
            &=f(p)-m_f(x,y)
\end{align*}

\textbf{Claim 2}: $g_p(x,z) \geq \min(g_p(x,y),g_p(y,z))$ for all $x,y,z$ in $X$.

By its definition, $m_f(x,z) \leq \max(m_f(x,y),m_f(y,z))$. Claim 2 follows from this and Claim 1.

Therefore  the hyperbolicity of $\mathcal{T}_fX$ is $0$.
\end{proof}

\begin{theorem}\label{thm:mergeTreeEq}
If $X$ is locally path connected, then $M_fX$ is the underlying topological space of $\mathcal{T}_fX$.
\end{theorem}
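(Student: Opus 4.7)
The plan is to construct a natural continuous bijection $\Phi : M_f X \to \mathcal{T}_f X$ and invoke the classical fact that a continuous bijection from a compact space to a Hausdorff space is automatically a homeomorphism. By Proposition \ref{prop:realtree}, the metric merge tree $\mathcal{T}_f X$ is Hausdorff, and $M_f X$ is compact since it is the image of the compact space $X$ under the continuous composite $x \mapsto (x,f(x)) \mapsto [(x,f(x))]$.

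Constructing $\Phi$ reduces to showing that the two equivalence relations on $X$ agree---the Reeb relation $\sim_M$ defining $M_f X$ and the metric relation $\sim_T$ defining $\mathcal{T}_f X$. Note first that $t_f(x,y) = 0$ forces $f(x) = f(y) =: r$ and $m_f(x,y) = r$, since $m_f(x,y) \geq \max\{f(x),f(y)\}$ for every $x,y$. The Reeb relation says that $x \sim_M y$ iff $f(x)=f(y)=r$ and $x,y$ lie in the same connected component of the closed sublevel set $f^{-1}(-\infty, r]$. For $\sim_M \Rightarrow \sim_T$: a connected component $C$ of $f^{-1}(-\infty, r]$ containing $x$ and $y$ is contained in the open sublevel set $f^{-1}(-\infty, r+\epsilon)$ for every $\epsilon>0$, and this open set is locally path connected as an open subspace of an LPC space; hence its connected components coincide with its path components, so $x$ and $y$ are joined by a path of maximum value at most $r+\epsilon$, and letting $\epsilon \to 0$ gives $m_f(x,y) = r$. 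Conversely, if $m_f(x,y) = r$, choose paths $\gamma_n$ from $x$ to $y$ with $\max f\circ\gamma_n \leq r + 1/n$, let $C_n$ denote the connected component of $f^{-1}(-\infty, r + 1/n]$ containing $\mathrm{Im}(\gamma_n)$, and note that $\{C_n\}$ is a nested family of compact connected subsets of $X$; the standard argument already used in Corollary \ref{cor:tree_poset}(ii) shows that $\bigcap_n C_n$ is a connected subset of $f^{-1}(-\infty, r]$ containing both $x$ and $y$, so $x \sim_M y$.

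Continuity of $\Phi$ follows once we verify that the map $\psi : X \to \mathcal{T}_f X$, $x \mapsto [x]_{t_f}$, is continuous, because $\psi$ factors through the continuous surjection $X \to M_f X$. Given $\epsilon > 0$ and $x \in X$, continuity of $f$ together with local path connectedness supply a path-connected neighborhood $U$ of $x$ with $f(U) \subseteq (f(x) - \epsilon/3, f(x) + \epsilon/3)$, so $U \subseteq f^{-1}(-\infty, f(x) + \epsilon/3)$; for $y \in U$ the path from $x$ to $y$ inside $U$ witnesses $m_f(x,y) \leq f(x) + \epsilon/3$, whence $t_f(x,y) = 2m_f(x,y) - f(x) - f(y) \leq \epsilon$. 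The main technical ingredient is the nested-intersection argument powering $\sim_T \Rightarrow \sim_M$, which essentially uses compactness of $X$; local path connectedness enters only to promote ``same connected component'' to ``joined by a path'' in the opposite direction and to provide continuity of $\psi$. Once $\Phi$ is in hand, the compact-to-Hausdorff principle collapses all candidate topologies to a single homeomorphism type.
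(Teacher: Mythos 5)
Your proof is correct and tracks the paper's proof closely on the core set-theoretic step (showing that the Reeb relation $\sim_M$ and the metric relation $\sim_T$ coincide), but it is substantially more explicit about the topological side of the statement, which the paper handles with the unelaborated assertion ``it is enough.'' On the set-theoretic step you and the paper prove $\sim_M \Rightarrow \sim_T$ by the same LPC-plus-open-sublevel-set argument, while for $\sim_T \Rightarrow \sim_M$ the paper simply cites Lemma~\ref{lem:poset_characterization} (iii)$\Rightarrow$(i), whereas you reprove the underlying nested-intersection/compactness argument directly; the content is the same, and your version has the mild advantage of making visible exactly where compactness of $X$ is used. The genuine added value of your write-up is the treatment of the topology: you show $\psi : X \to \mathcal{T}_f X$ is $t_f$-continuous (via an LPC path-connected neighborhood argument that checks out: $t_f(x,y)=2m_f(x,y)-f(x)-f(y)\le \epsilon$), note that $\mathcal{T}_f X$ is Hausdorff and the domain is compact, and invoke the compact-to-Hausdorff principle. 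This is exactly the step the paper elides. One small imprecision worth fixing: to pass from continuity of $\psi = \Phi\circ q$ to continuity of $\Phi$, you need $q : X \to M_f X$ to be a \emph{quotient} map, not merely a continuous surjection; as phrased, ``$\psi$ factors through the continuous surjection $X \to M_f X$'' does not yet license the conclusion. This is harmless if one reads $M_f X$ as carrying the quotient topology induced from $X$ (the natural reading, and the one that makes the paper's terse proof defensible), but if one insists on the subspace topology inherited from $T_f X$, the quotient-map property of $q$ requires a further argument (e.g., deducing Hausdorffness of $T_f X$ from Lemma~\ref{lem:poset_characterization}). Stating explicitly which topology $M_f X$ carries and that $q$ is a quotient map would close that loop.
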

\begin{proof}
It is enough to show that $t_f(x,y) = 0$ if and only if $(x,f(x))$ and $(y,f(y))$ are identified under the Reeb quotient map $\pi$.

Assume $t_f(x,y)=0$. Then, $m_f(x,y)=f(x)=f(y)$. So, for each $t>f(x)=f(y)$, there is a path in the sublevel set $X^{\leq t}$ connecting $x,y$. Therefore, $\pi(x,t) = \pi(y,t)$, and by Lemma \ref{lem:poset_characterization}, $\pi(x,r) = \pi(y,r)$. 

Assume $\pi(x,f(x))=\pi(y,f(y))$. Let $r=f(x)$ and $t > r$. Since $x$ and $y$ are in the same connected component of the (closed) sublevel set $X^{\leq r}$ and $X$ is locally path connected, $x,y$ are in the same path component of the (open) sublevel set $X^{<t}$. This implies that $m_f(x,y) < t$. Since $t>r$ arbitrary, $m_f(x,y) = r = f(x) = f(y)$, so $t_f(x,y) = 0$.

\end{proof}

\section{Technical Proofs}\label{sec:technical_proofs}

This section contains proofs from the main body of the paper which are technical, but essentially straightforward. These results are focused on barcodes and bottleneck distances. To keep the exposition clean, we deal with barcodes satisfying the simplifying assumptions:
\begin{itemize}
    \item the barcode is a  \emph{set} (all multiplicities of intervals are equal to one);
    \item the barcode is finite; 
    \item each interval in the barcode is a half open interval of the form $[b,d)$, where $0 \leq b \leq d \leq \infty$.
\end{itemize}

\subsection{Proof of Proposition \ref{prop:decorated_bottleneck_reformulation}}\label{sec:technical_proof_decorated_bottleneck_reformulation}

To prove the proposition, we will introduce the notion of truncating a barcode.

\begin{definition}\label{def:truncation}
Let $B$ be a barcode and let $I = [b,d) \in B$. The \define{truncation of $I$} at height $h$, denoted $\mathrm{trunc}_h(I)$ is
\begin{itemize}
    \item equal to $I$ if $h \leq b$;
    \item the interval $[h,d)$ if $b \leq h \leq d$;
    \item the empty interval if $h > d$.
\end{itemize}
The \define{truncation of $B$} at height $h$, $\mathrm{trunc}_h(B)$, is the barcode obtained by truncating all of the intervals of $B$ at height $h$. 
\end{definition}

\begin{lemma}\label{lem:truncation}
Let $B$ and $B'$ be barcodes whose intervals $[b,d)$ all satisfy $b \geq H$ for some constant $H \geq 0$. Suppose that there exists a $\delta$-matching between $B$ and $B'$. For $\epsilon \leq \delta$, there is a $\delta$-matching between $B$ and $\mathrm{trunc}_{H+\e}(B')$. 
\end{lemma}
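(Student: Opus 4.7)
My plan is to construct the required $\delta$-matching $\xi'$ between $B$ and $\mathrm{trunc}_{H+\e}(B')$ directly from the hypothesized $\delta$-matching $\xi$ between $B$ and $B'$. Concretely, for each pair $(I,I')\in\xi$ with $I=[b,d)\in B$ and $I'=[b',d')\in B'$, let $I'_t:=\mathrm{trunc}_{H+\e}(I')$. I will place $(I,I'_t)$ in $\xi'$ whenever $I'_t$ is nonempty, and otherwise leave $I$ unmatched in $\xi'$; intervals of $B$ or $B'$ that were already unmatched under $\xi$ remain so (with $B'$-intervals replaced by their truncations when relevant). Verifying that $\xi'$ has cost at most $\delta$ then reduces to bounding three types of contributions.

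First, consider a matched pair with $I'_t$ nonempty. If $b'\geq H+\e$ then $I'_t=I'$ and $\|I-I'_t\|_\infty=\|I-I'\|_\infty\leq\delta$. Otherwise $I'_t=[H+\e,d')$; the death endpoint is unchanged so $|d-d'|\leq\delta$, and for the birth endpoint the hypothesis $b\geq H$ yields $H+\e-b\leq\e\leq\delta$, while $b\leq b'+\delta\leq H+\e+\delta$ gives $b-(H+\e)\leq\delta$. Second, for a matched pair with $I'_t$ empty (which forces $d'\leq H+\e$), the bar $I$ becomes unmatched under $\xi'$, and the hypotheses combine to yield $d\leq d'+\delta\leq H+\e+\delta$ together with $b\geq H$, so $\|I\|_\Delta=(d-b)/2\leq(\e+\delta)/2\leq\delta$. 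Third, for bars $I'\in B'$ that were unmatched in $\xi$ (so $\|I'\|_\Delta\leq\delta$), the truncation $I'_t$ satisfies $\|I'_t\|_\Delta\leq\|I'\|_\Delta\leq\delta$, and bars of $B$ left unmatched by $\xi$ contribute their original cost $\leq\delta$.

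I do not anticipate any genuine obstacle; the argument is a case analysis on how truncation interacts with the given matching. The one place where the hypothesis $\e\leq\delta$ is essential is in the first step, bounding $H+\e-b\leq\e$: without this constraint the truncated matching could have cost $\max(\e,\delta)$ rather than $\delta$. All other inequalities use only $b\geq H$ for both barcodes together with the standard endpoint-wise bound from $\xi$ being a $\delta$-matching.
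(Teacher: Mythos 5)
Your proposal is correct and follows essentially the same approach as the paper's own proof: both construct the new matching by truncating the range intervals of the given $\delta$-matching, discarding pairs whose truncation vanishes, and then perform the same endpoint-by-endpoint case analysis to bound the cost by $\delta$, with the key use of $\epsilon\leq\delta$ in bounding $H+\epsilon-b\leq\epsilon$. You are slightly more explicit than the paper in separating the trivial subcase $b'\geq H+\epsilon$ and in spelling out why unmatched intervals in $B'$ retain cost $\leq\delta$ after truncation, but there is no substantive difference.
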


\begin{proof}
Let $\xi$ be a $\delta$-matching of $B$ and $B'$. Define a matching $\widehat{\xi}$ of $B$ and $\mathrm{trunc}_{H+\epsilon}(B')$ by setting 
\begin{align*}
\mathrm{dom}(\widehat{\xi}) &:= \{I \in \mathrm{dom}(\xi) \mid \mathrm{trunc}_{H+\e}(\xi(I)) \neq \emptyset\}\\
\mathrm{ran}(\widehat{\xi}) &:= \{I' \in \mathrm{ran}(\xi) \mid \mathrm{trunc}_{H+\e}(I') \neq \emptyset\}, \mbox{ and } \\
\widehat{\xi}(I) &:= \mathrm{trunc}_{H+\e}(\xi(I)).
\end{align*}
Let $I = [b,d) \in \mathrm{dom}(\widehat{\xi})$ with $I' = [b',d') = \xi(I)$. Then $b' \leq H + \e \leq d'$ and $\mathrm{trunc}_{H+\e}(I') = [H+\e,d')$. We have $|d-d'| \leq \delta$, by the assumption that $\xi$ was a $\delta$-matching. Moreover,
\[
\delta \geq b-b' \geq b - (H + \e)
\]
and
\[
\delta \geq \e \geq \e + (H-b) = (H+\e) - b
\]
imply that $|b - (H+\e)| \leq \delta$, so that the cost of matching $I$ with $\mathrm{trunc}_{H+\e}(I')$ is less than $\delta$. On the other hand, if $I \not \in \mathrm{dom}(\widehat{\xi})$ then either $I \not \in \mathrm{dom}(\xi)$, in which case we are done, or $\mathrm{trunc}_{H+\e}(\xi(I)) = \emptyset$. In the latter case, let $I = [b,d)$ and $I' = [b',d') = \xi(I)$. We have 
\[
d-b \leq d' + \delta - b \leq d' + \delta - H \leq H + \e + \delta - H \leq 2\delta,
\]
hence $\|[b,d)\|_\Delta \leq \delta$. Similar arguments handle intervals $I' \not \in \mathrm{ran}(\widehat{\xi})$. 
\end{proof}

\begin{proof}[Proof of Proposition \ref{prop:decorated_bottleneck_reformulation}]
Let 
\[
\widehat{d}_B(\cB_F,\cB_G) := \inf \{\epsilon \geq 0 \mid \mbox{$\exists$ $\e$-matching of $\mathcal{B}_F$ and $\mathcal{B}_G$}\},
\]
so that our goal is to show $d_B = \widehat{d}_B$. Clearly $d_B \leq \widehat{d}_B$, since the former infimizes over a larger set of matchings than the latter. To see the reverse inequality, let $\Phi$, $\Psi$ define an $(\e,\delta)$-matching. If $\e \geq \delta$, then each of the $\delta$-matchings between $\cB_F(p)$ and $\cB_G(\Phi(p))$ and between $\cB_G(q)$ and $\cB_F(\Psi(q))$ is, in particular, an $\e$-matching. This implies the existence of an $\e$-matching between $\cB_F$ and $\cB_G$. Finally, suppose that $\delta \geq \e$. Let $\widehat{\Phi}$ and $\widehat{\Psi}$ denote the $\delta$-interleaving maps obtained by composing $\Phi$ and $\Psi$ with flows in their respective target merge trees. We claim that there exist $\delta$-matchings between all pairs $\cB_F(p)$ and $\cB_G(\widehat{\Phi}(p))$ and between $\cB_G(q)$ and $\cB_F(\widehat{\Psi}(q))$. Indeed, setting $H = \pi_F(p)$, with $\pi_F:\cM_F \to \R$ denoting projection, we have that
\[
\cB_G(\widehat{\Phi}(p)) = \mathrm{trunc}_{H + \delta - \e}(\cB_G(\Phi(p))).
\]
Lemma \ref{lem:truncation} therefore implies that there exists a $\delta$-matching of $\cB_F(p)$ and $\cB_G(\widehat{\Phi}(p))$. The existence of a $\delta$-matching of $\cB_G(q)$ and $\cB_F(\widehat{\Psi}(q))$ follows for the same reason.
\end{proof}

We now have the tools at hand to prove Proposition \ref{prop::determinedleaves}.

\begin{proof}[Proof of Proposition \ref{prop::determinedleaves}.]
Let $p\in \cM_F$ be an arbitrary point. Choose a leaf $v$ of $\cM_F$ with $v \cle p$ and a leaf $w \cle \Phi(p)$. Let $\pi_F:\cM_F \to \R$ be the projection map. Then the barcodes at these points can be expressed as 
\begin{align*}
\cB_F(p) &= \mathrm{trunc}_{\pi_F(p)}(\cB_F(v)), and \\
\cB_G(\Phi(p)) &= \mathrm{trunc}_{\pi_F(p) + \e}(\cB_G(w)).
\end{align*}
It is straightforward to check that a $\delta$-matching of $\cB_F(p)$ and $\cB_F(\Phi(p))$ induces a $\delta$-matching of the truncated barcodes. A similar argument applies to pairs $q \in \cM_G$ and $\Psi(q) \in \cM_F$.
\end{proof}

\subsection{Proof of Proposition \ref{prop:disjointness}}\label{sec:technical_proofs_disjointness}

    Let $B = \{I_j = [b_j,d_j)\}_{j=1}^N$ be the degree-$k$ barcode for $F$. To ease exposition, assume that the births in the barcode are distinct---this assumption is easily removed at the cost of necessitating more involved notation. Suppose that the indices have been chosen so that $b_1 < b_2 < \cdots < b_N$. For each bar $I_j$, we choose a representative cycle $c_j$ which generates the persistent homology class represented by $I_j$; in particular, choose $c_j$ so that all of its simplices are contained in the connected component of the birth simplex for $I_j$. Let $[c_j]$ denote the homology class of $c_j$. Let $p_j \in \cM_F$ denote the birth point of $I_j$.
    
    We can use these representative cycles to build bases for various homology vector spaces which agree with the barcode decomposition. In the following, let $\iota_{b,d}$ denote the inclusion map $F(b) \hookrightarrow F(d)$ for each $b < d$. By an abuse of notation, we also let $\iota_{b,d}$ denote the induced map on homology $H_k(F(b)) \to H_k(F(d))$. We now construct our bases:
    \begin{enumerate}
        \item $\{v_1 := [c_1]\}$ is a basis for $H_k(F(b_1))$,
        \item $\{[c_2],\iota_{b_1,b_2}(v_1)\}$ is a spanning set for $H_k(F(b_2))$ and is linearly independent if 
        \[
        \iota_{b_1,b_2}(v_1) \neq 0.
        \]
        Moreover, we can choose $\lambda_1^{(2)} \in \Bbbk$ so that
        \[
        v_2 := [c_2] + \lambda_1^{(2)} \iota_{b_1,b_2}(v_1)
        \]
        has the property that 
        \[
        \iota_{b_2,d} (v_2) \not \in \mathrm{span} \{\iota_{b_1,d} (v_1)\}
        \]
        for any $d < d_2$. In particular, $v_2 \not \in \mathrm{ker}(\iota_{b_2,d})$ for $d < d_2$.
        \item Suppose that we have defined $v_1,\ldots,v_{j-1}$. Then 
        \[
        \{[c_j],\iota_{b_{j-1},b_j}(v_{j-1}),\ldots,\iota_{b_1,b_j}(v_1)\}
        \]
        is a spanning set for $H_k(F(b_j))$ which is linearly independent once any zero vectors have been removed. We define
        \[
        v_j := [c_j] + \lambda^{(j)}_{j-1} \iota_{b_{j-1},b_j} (v_{j-1}) + \cdots + \lambda_1^{(j)} \iota_{b_1,b_{j-1}} (v_1),
        \]
        where coefficients $\lambda^{(j)}_i$ are chosen so that 
        \begin{equation}\label{eqn:inclusion_condition}
        \iota_{b_j,d} (v_j) \not \in \mathrm{span}\{\iota_{b_{j-1},d} (v_{j-1}),\ldots,\iota_{b_1,d} (v_1) \}
        \end{equation}
        for all $d < d_j$. In particular, $v_j \not \in \mathrm{ker}(\iota_{b_j,d})$ for $d < d_j$.
    \end{enumerate}
The lifespan of the vector $v_j$ defined via this process represents the persistent homology bar $I_j$. 

To prove that $\widehat{\cF} \approx \cF$, we will show that $\cF$ is real interval decomposable. To do so, it suffices to show that each of these $v_j$ can be chosen to be a linear combination of classes with cycle representatives that lie in the same connected component as the birth simplex of $I_j$. This will be achieved via induction. 

For the base case of the induction argument, we have $v_1 = [c_1]$, and the claim follows immediately. Suppose that $v_1,\ldots,v_{j-1}$ have the desired property and consider $v_j$. We sort the $v_1,\ldots,v_{j-1}$, into a collection $v_{r_1},\ldots,v_{r_m}$ of vectors corresponding to bars in $B$ which are born in the same component as $c_j$ (i.e., bars whose birth points satisfy $p_{r_1},\ldots,p_{r_m} \cle p_j$) and a collection $v_{s_1},\ldots,v_{s_n}$ of vectors which do not have this property, so that
\begin{align*}
v_j &= [c_j] + \lambda_{r_1}^{(j)} \iota_{b_{r_1},b_j} (v_{r_1}) + \cdots \\
&\qquad + \lambda_{r_m}^{(j)} \iota_{b_{r_m},b_j} (v_{r_m}) + \lambda_{s_1}^{(j)} \iota_{b_{s_1},b_j} (v_{s_1}) + \cdots + \lambda_{s_n}^{(j)} \iota_{b_{s_n},b_j} (v_{s_n}).
\end{align*}
We claim that taking $\lambda_{s_i}^{(j)} = 0$ for all $i = 1,\ldots,n$ results in a valid vector $v_j$; i.e., that this yields a $v_j$ satisfying \eqref{eqn:inclusion_condition}. Indeed, let $d < d_j$ and suppose that 
\begin{align}
&\iota_{b_j,d} \left([c_j] + \lambda_{r_1}^{(j)} \iota_{b_{r_1},b_j} (v_{r_1}) + \cdots + \lambda_{r_m}^{(j)} \iota_{b_{r_m},b_j} (v_{r_m}) \right) \nonumber \\
&\hspace{2in} = \mu_1 \iota_{b_j,d}(v_{s_1}) + \cdots + \mu_m \iota_{b_j,d}(v_{s_m}) \label{eqn:image_in_span}
\end{align}
for some coefficients $\mu_i$. We first note that there must be some $v_{s_i} \not \in \mathrm{ker}(\iota_{s_i,d})$---otherwise the right-hand side of \eqref{eqn:image_in_span} is zero, hence left-hand side is as well and we arrive at the contradiction that $\iota_{b_j,d}(v_j) = 0$. We therefore have a well defined number
\[
d' := \min \{\mathrm{merge}(p_j,p_{s_i}) \mid \iota_{b_{s_i},d}(v_{s_i}) \neq 0\}.
\]
By the $H_k$-disjointness assumption, either $d_j < d'$ or $d_{s_i} < d'$ for all $s_i$. The former case implies $d < d'$. By the induction hypothesis, there is a cycle representation of the left-hand side of \eqref{eqn:image_in_span} whose vertices all belong to the same connected component as those of $\iota_{b_j,d}(c_j)$ and a cycle representation of the right-hand side whose vertices belong to a different connected component than $c_j$. Moreover, these cycle representatives are homologous, and we have arrived at a contradiction. On the other hand, if $d_{s_i} < d'$ for all $s_i$ then it must be that $d < d_{s_i}$ for some $s_i$ (since there exists some $v_{s_i} \not \in \mathrm{ker}(\iota_{b_{s_i},d})$) and a similar argument yields a contradiction. This completes the proof. \qed

\end{document}